\title[Strong approximation for affine quadrics]{
	Effective equidistribution, arithmetic purity of strong approximation, and geometric sieve for affine quadrics}
\author{Yang Cao \and Zhizhong Huang \and Runlin Zhang}
\date{\today}
\address{School of Mathematics, Shandong University, 27 Shanda South Road, 250100 Jinan, China}
\email{yangcao1988@gmail.com}
\address{State Key Laboratory of Mathematical Sciences, Academy of Mathematics and Systems Science, Chinese Academy of Sciences, Beijing 100190, China}
\email{zhizhong.huang@yahoo.com}
\address{College of Mathematics and Statistics, Center of Mathematics, Chongqing University, 401331, Chongqing,  China}
\email{zhangrunlin@outlook.com}
\keywords{Equidistribution of $S$-integral points, affine linear sieve,  geometric sieve, affine quadrics}
\subjclass[2010]{14G05 (primary), 37A17, 11D45, 11N35 (secondary)}
\date{\today}
\def\setminus{\mathchoice
	{\mathbin{\vrule height .92ex width 1.81ex depth -.58ex}}
	{\mathbin{\vrule height .92ex width 1.81ex depth -.58ex}}
	{\mathbin{\vrule height .65ex width 1.00ex depth -.43ex}}
	{\mathbin{\vrule height .50ex width 0.770ex depth -.34ex}}
}
\newcommand{\APHL}{\textbf{(APHL)}}
\newcommand{\APSA}{\textbf{(APSA)}}
\newcommand{\SL}{\operatorname{SL}}
\newcommand{\BA}{{\mathbb {A}}}
\newcommand{\BB}{{\mathbb {B}}}
\newcommand{\BF}{{\mathbb {F}}}
\newcommand{\BN}{{\mathbb {N}}}
\newcommand{\BP}{{\mathbb {P}}}
\newcommand{\BQ}{{\mathbb {Q}}}
\newcommand{\BR}{{\mathbb {R}}}
\newcommand{\BT}{{\mathbb {T}}}
\newcommand{\BV}{{\mathbb {V}}}
\newcommand{\BZ}{{\mathbb {Z}}}
\newcommand{\CA}{{\mathcal {A}}}
\newcommand{\CB}{{\mathcal {B}}}
\newcommand{\CD}{{\mathcal {D}}}
\newcommand{\CE}{{\mathcal {E}}}
\newcommand{\CF}{{\mathcal {F}}}
\newcommand{\CG}{{\mathcal {G}}}
\newcommand{\CI}{{\mathcal {I}}}
\newcommand{\CN}{{\mathcal {N}}}
\newcommand{\CO}{{\mathcal {O}}}
\newcommand{\CP}{{\mathcal {P}}}
\newcommand{\CS}{{\mathcal {S}}}
\newcommand{\CT}{{\mathcal {T}}}
\newcommand{\CU}{{\mathcal {U}}}
\newcommand{\CV}{{\mathcal {V}}}
\newcommand{\CX}{{\mathcal {X}}}
\newcommand{\CY}{{\mathcal {Y}}}
\newcommand{\CZ}{{\mathcal {Z}}}
\newcommand{\RA}{{\mathbf {A}}}
\newcommand{\GL}{{\mathrm{GL}}}
\newcommand{\ord}{{\mathrm{ord}}}
\renewcommand{\mod}{\ \mathrm{mod}\ }
\renewcommand{\Re}{{\mathrm{Re}}}
\font\cyr=wncyr10
\newcommand{\Sha}{\hbox{\cyr X}}
\newcommand{\norm}[1]{\|{#1}\|}
\newcommand{\bs}{\backslash}
\newcommand{\sbt}{\subset}
\newcommand{\pr}{\operatorname{pr}}
\newcommand{\blambda}{\boldsymbol{\lambda}}
\newcommand{\bxi}{\boldsymbol{\xi}}
\newcommand{\bx}{\boldsymbol{x}}
\newcommand{\by}{\boldsymbol{y}}
\newcommand{\bz}{\boldsymbol{z}}
\newcommand{\bY}{\boldsymbol{Y}}
\newcommand{\bmone}{\bm{\mathrm{1}}}
\newcommand{\diff}{\mathrm{d}}  
\newcommand{\frakh}{\mathfrak{h}}
\newcommand{\fraks}{\mathfrak{s}}
\newcommand{\frakg}{\mathfrak{g}}
\newcommand{\rmm}{\mathrm{m}}
\newcommand{\calN}{\mathcal{N}}
\newcommand{\calO}{\mathcal{O}}
\newcommand{\calS}{\mathcal{S}}
\newcommand{\scrA}{\mathscr{A}}
\newcommand{\scrC}{\mathscr{C}}
\newcommand{\scrD}{\mathscr{D}}
\newcommand{\scrH}{\mathscr{H}}
\newcommand{\scrX}{\mathscr{X}}
\newcommand{\scrY}{\mathscr{Y}}
\newcommand{\scrZ}{\mathscr{Z}}
\newcommand{\scrO}{\mathscr{O}}
\newcommand{\Z}{\mathbb{Z}}
\newcommand{\R}{\mathbb{R}}
\newcommand{\C}{\mathbb{C}}
\newcommand{\ep}{\varepsilon}
\newcommand{\embed}{\hookrightarrow}
\newcommand{\midd}{\;\middle\vert\;}
\newcommand{\normm}[1]{\left\lvert#1\right\rvert}
\newtheorem{thm}{Theorem}[section]
\newtheorem{defi}[thm]{Definition}
\newtheorem{lem}[thm]{Lemma}
\theoremstyle{plain}
\newtheorem{theorem}{Theorem}[section]
\newtheorem{lemma}[theorem]{Lemma}
\newtheorem{corollary}[theorem]{Corollary}
\newtheorem{proposition}[theorem]{Proposition}
\theoremstyle{definition}
\newtheorem{example}[theorem]{Example}
\newtheorem*{remark*}{Remark}
\newtheorem*{remarks*}{Remarks}
\newtheorem{setting}[theorem]{Setting}
\begin{document}
	\begin{abstract}
		Let $k$ be a number field. Let $q(x_1,\cdots,x_n)$ be a non-degenerate integral quadratic form in $n\geq 3$ variables with coefficients in $k$ and $m\in k^\times$. Let $X$ be the affine quadric defined by $q=m$ in $\BA^n_k$.
		Based on results on effective equidistribution of $S$-integral points in symmetric spaces, we establish the following: \begin{enumerate}
			\item The arithmetic purity of strong approximation off any single place of $k$  for $X$;
			\item The geometric sieve for $p_0$-integral points on $X$ when $k=\BQ$ and $p_0$ is a prime number.
		\end{enumerate}
	\end{abstract}
	\maketitle
		 \tableofcontents
		 
		 \section{Introduction}
	Let $X$ be a geometrically integral variety over  a number field $k$. It is a fundamental problem to study the density of rational points in $X$. A quanlitative way is to embed $X(k)$ into a suitable adelic space $X(\RA_{k}^S)$, $S$ being a finite set of places of $k$, and ask about the density inside with respect to the corresponding adelic topology. We say that $X(k)$ satisfies \emph{strong approximation off $S$} if $X(k)$ is dense in $X(\RA_{k}^S)$. It is a natural question (first proposed by Wittenberg \cite[Question 2.11]{Wit16}, see also \cite[Question 1.2]{CLX19}) that whether the strong approximation property is inherited upon removing any closed subset of codimension at least two. This motivates the following:
	\begin{defi}[cf. \cite{CLX19} Definition 2.1, \cite{Cao-Huang1} Definition 1.3]
		We say that $X$ satisfies the \emph{arithmetic purity of strong approximation off $S$}, abbreviated as \emph{\APSA\ off $S$}, if $U\subset X$ is an open subset such that the codimension of $X\setminus U$ is at least two, then $U(k)$ is dense in $U(\RA_{k}^S)$.
	\end{defi}
	 
It is well-known (due to Kneser, Platonov et. al, cf. \cite[Theorem 1.1]{Cao-Huang1}) that any non-trivial semi-simple simply connected algebraic group $G$ satisfies strong approximation off any place $v$, provided $G(k_v)$ has not compact factors. Supported by the fact that removing codimension two closed subsets does not create new topological obstruction, we expect that any homogeneous space under such a group $G$ should satisfy \APSA\ off $v$ (with Brauer--Manin obstruction whenever the stabilizer is not simply connected). 
	We refer to \cite[\S1.1]{Cao-Huang1} for a detailed account of known results towards Wittenberg's question. The main difficulty is that the adelic topology of a proper open subset differs from that of the ambient space, since infinitely many local conditions arise. 
	
	
	Our first main result fully confirms the \APSA\ property for affine quadrics. Let $q=q(x_1,\cdots,x_n)$ be an non-degenerate quadratic form in $n$-variables with coefficients in $k$, and $m\in k^\times$. Let $X$ be the affine variety \begin{equation}\label{eq:affquadric}
	    (q=m)\subset \BA_k^{n},
	\end{equation} which we call an \emph{affine quadric}. We always assume that $X(\RA_k)\neq \varnothing$, which is equivalent to, by the Hasse--Minkowski theorem, $X(k)\neq\varnothing$. 
	It is classically known (due to Eicher, Kneser et. al) that when $n\geq 3$, $X$ satisfies strong approximation (with Brauer--Manin obstruction if $n=3$) off any single place $v$ of $k$, provided that $X(k_v)$ is not compact. Note that this condition is equivalent to $q$ being isotropic over $k_v$. 
	\begin{thm}\label{thm:affinequadricsAPSA}
		Assume that $n\geq 4$. The affine quadric $X$ above satisfies \APSA\ off any single place $v$ of $k$, provided that $q$ is isotropic over $k_v$. 
	\end{thm}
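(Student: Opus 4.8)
The plan is to exploit that $X$ is an affine symmetric space, feeding the classical strong approximation off $v$ into an effective equidistribution statement for its $\{v\}$-integral points. \emph{Setup.} Pick $x_{0}\in X(k)$, possible by Hasse--Minkowski. By Witt's theorem $G=\Sp(q)$ acts transitively on $X$ with stabiliser the spin group of the non-degenerate rank $n-1$ form $q'=q|_{x_{0}^{\perp}}$, so $X\cong G/H$ with $H\cong\Sp(q')$ and $X=\SO(q)/\SO(q')$. Since $n\geq 4$ we have $n-1\geq 3$, so $G$ and $H$ are both semisimple and simply connected; in particular there is no Brauer--Manin obstruction on $X$, and — as $q$ is isotropic over $k_{v}$, i.e.\ $G(k_{v})$ is non-compact — $X$ already satisfies strong approximation off $v$ in the classical sense (Eichler, Kneser). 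The content of the theorem is to upgrade this to its purity version.

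\emph{Reduction.} Let $U=X\setminus Z$ with $\codim_{X}Z\geq 2$, and fix flat models $\mathcal{X}\supseteq\mathcal{Z}$ over $\mathcal{O}_{k,\{v\}}$, so $\mathcal{U}=\mathcal{X}\setminus\mathcal{Z}$ is an open model of $U$. The purity statement unwinds to: for every finite set $S_{0}$ of places with $v\in S_{0}$, containing the archimedean places and the places of bad reduction, and every choice of non-empty open $W_{w}\subseteq U(k_{w})$ for $w\in S_{0}\setminus\{v\}$, there is $y\in U(k)$ with $y\in W_{w}$ for all $w\in S_{0}\setminus\{v\}$ and $\overline{y}\notin\mathcal{Z}$ modulo $w$ (equivalently $y\in\mathcal{U}(\mathcal{O}_{w})$) for all $w\notin S_{0}$. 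Classical strong approximation supplies $y_{0}\in X(k)$, integral outside $S_{0}$, with $y_{0}\in W_{w}$ for $w\in S_{0}\setminus\{v\}$; but $y_{0}$ may lie on $Z$ or reduce into $\mathcal{Z}$ modulo some $w$. Everything comes down to \emph{perturbing} $y_{0}$ within $X(k)$, keeping it integral outside $S_{0}$ and inside the neighbourhoods $W_{w}$, so that the new point avoids $Z$ globally and $\mathcal{Z}$ modulo every $w\notin S_{0}$.

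\emph{Perturbation via equidistribution.} The perturbation uses the $G$-action: move $y_{0}$ by the arithmetic group $\Gamma=G(\mathcal{O}_{k,S_{0}})$, equivalently translate the $H$-orbit through $x_{0}$. The orbit $\Gamma\,y_{0}$ lies in $X(k)$ and is integral outside $S_{0}$, and the effective equidistribution of $S$-integral points on the symmetric space $X$ gives: ordering $\Gamma\,y_{0}$ by a height and truncating at a parameter $B$, the normalised counting measure converges to the $G$-invariant probability measure on $\prod_{w\in S_{0}\setminus\{v\}}X(k_{w})$ (and, reduced modulo any integer $N$, to the uniform measure on $\mathcal{X}(\mathcal{O}/N)$), with a \emph{power-saving} error that is uniform in the level $N$. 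Two soft geometric facts then close the argument. (i) $Z(k_{v})$ and each $Z(k_{w})$, $w\in S_{0}\setminus\{v\}$, have measure zero, so the main term alone puts a positive proportion of the truncated orbit simultaneously inside $\prod_{w}W_{w}$ and off a fixed tube around $Z(k_{v})\supseteq Z(k)$. (ii) Since $\codim Z\geq2$, Lang--Weil gives $\#\mathcal{Z}(\BF_{w})\ll_{\deg\mathcal{Z}}|\BF_{w}|^{\dim X-2}$, so the proportion of the truncated orbit reducing into $\mathcal{Z}$ modulo a finite $w\notin S_{0}$ is $\ll|\BF_{w}|^{-2}$ plus the uniform equidistribution error; enlarging $S_{0}$ to absorb the small places makes $\sum_{w}|\BF_{w}|^{-2}$ strictly less than one, so for $B$ large the points bad modulo some $w\leq B^{\theta}$ form a minority, while the points bad modulo some $w>B^{\theta}$ are negligible by the geometric-sieve estimate bounding $\{v\}$-integral points of height $\leq B$ whose reduction lands on the fixed proper subscheme $\mathcal{Z}$. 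A point of $\Gamma\,y_{0}$ avoiding all of these loci is the desired $y\in U(k)$.

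\emph{Main obstacle.} The heart of the matter is the combination of \emph{effectivity} and \emph{uniformity in the level}: one needs the power-saving in the equidistribution — ultimately a spectral gap for $\Sp(q)$ (property $(\tau)$) together with effective mixing on the symmetric space — to come with explicit, uniform dependence on $N$, so that the per-place errors can be summed against the convergent sieve series over all places simultaneously; and one needs the complementary bound handling the tail $w>B^{\theta}$, namely that very few $\{v\}$-integral points of bounded height reduce onto a fixed codimension-$\geq 2$ subscheme modulo a large prime — precisely the geometric sieve developed in part~(2) (there for $k=\BQ$; over a general number field one needs its analogue). A secondary technicality is that $X(k)$ may split into finitely many $G(k)$-orbits, so the perturbation is run on the $G(k)$-orbit of $y_{0}$, using that it is dense in $X(\mathbf{A}_{k}^{\{v\}})$ — which again is guaranteed by $H$ being simply connected, i.e.\ by $n\geq 4$.
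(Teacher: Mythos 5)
Your proposal takes a genuinely different route from the paper, and it contains a real gap that the paper's actual strategy is specifically designed to avoid.

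\textbf{What you do.} You propose a ``direct sieve on $X$'': take the $\Gamma$-orbit of a point $y_0\in X(k)$ supplied by classical strong approximation, order it by height, and use effective equidistribution (uniform and power-saving in the level) to control the proportion of orbit points reducing into $\mathcal{Z}$ modulo each small prime $w$, together with a geometric sieve estimate to dispose of the tail of large primes $w>B^{\theta}$. This is precisely the structure of the \textbf{(APHL)} argument in \S5 of the paper (and of \cite{Cao-Huang2}, \cite{Browning-HB}).

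\textbf{The gap.} The tail bound you invoke — that few $\{v\}$-integral points of height $\le B$ reduce onto the fixed codimension-$\ge 2$ subscheme $\mathcal{Z}$ modulo some prime $w>B^\theta$ — is exactly the geometric sieve, and it is only available over $k=\BQ$. The paper's Theorem~\ref{thm:geosieve} is stated and proved for $p_0$-integral points over $\BQ$, and the inputs it relies on (Ekedahl's sieve and its refinement by Browning--Heath-Brown, the half-dimensional sieve, the lattice point counts for binary forms $u^2+av^2=\ast$) are all carried out over $\BZ$. You acknowledge this parenthetically (``over a general number field one needs its analogue''), but that analogue is not established here, is not in the literature, and is not a routine extension. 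Since Theorem~\ref{thm:affinequadricsAPSA} is over an arbitrary number field $k$ and an arbitrary place $v$, your argument as written only covers the case $k=\BQ$; for general $k$ it begs the question.

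\textbf{What the paper does instead.} The paper deliberately avoids the geometric sieve for \textbf{(APSA)}. It first reduces (via \cite[Theorem 1.3]{CLX19} and \cite[Theorem 2.1]{Cao-Huang1}) to \textbf{(APSA)} for $\operatorname{Spin}(q)$, then further to three-dimensional spin groups using the Grassmannian lemma that produces a rank-$3$ spin subgroup non-compact at $v_0$. For a three-dimensional $G$ it proves Theorem~\ref{thm:subtorus}: one fibres $G\to Y=G/T$ over an anisotropic torus $T$, observes that the codimension-$2$ bad locus $D$ projects into a divisor $(F=0)\subset Y$, and then runs the \emph{affine linear sieve} (Theorem~\ref{thm:almostprime}) to find $g_0$ such that $F_0(g_0)$ has at most $r_0$ prime divisors, all of norm $>M$. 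Because the fibre $\pi^{-1}(y_1)$ is a $T$-orbit and $T(k)\cap\Phi_T$ is infinite (Lemma~\ref{lem:3dim-torus}, using $T(k_{v_0})$ non-compact), one can pick, among the $r_0L+1$ translates $Q_i P_1$, one that is distinct mod each of the $\le r_0$ bad primes from the $\le L$ bad residues, with no need to sum over infinitely many primes. In other words, the geometric sieve's ``infinitely many local conditions'' problem is collapsed to finitely many primes by reparametrising via the torus fibration, and the only analytic input needed is the affine linear sieve — which does run over any number field (this is a stated point of the paper, cf.\ the sentence ``This sieve being executed over the number field $k$ without recurring to the Weil restriction\ldots'').

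\textbf{Secondary remarks.} Your reduction and orbit-perturbation setup is essentially sound, and your identification of $H\cong\operatorname{Spin}(q')$ simply connected for $n\ge 4$, hence of strong approximation off any non-compact $v$, matches the paper. Your handling of the $G(k)$-orbit decomposition is a bit loose but repairable. But the missing geometric sieve over $k$ is not a ``secondary technicality''; it is the core difficulty your route would have to solve, and the paper's architecture (\S3--\S4 for \textbf{(APSA)} versus \S5 for \textbf{(APHL)}) exists precisely because one can and the other cannot be done over $\BQ$ only.
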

		Theorem \ref{thm:affinequadricsAPSA} generalizes \cite[Theorem 1.6]{Cao-Huang1}, where the \APSA\ property is proven only off all the archimedean places. Combined with arguments in \cite{CLX19,Cao-Huang1}, we can also  confirm \APSA\ off $v$ with Brauer--Manin obstruction for affine quadrics defined by ternary quadratic forms.

		A closely related quantitative aspect is the so-called Hardy--Littlewood property, first introduced in work of Borovoi--Rudnick \cite{Browning-Gorodnik}. Let $X$ be a geometrically integral quasi-affine variety equipped with a gauge form, to which we associate a Tamagawa measure $\rmm_X$ on $X(\RA_{k})$ (cf. \cite[\S1]{Borovoi-Rudnick}, for instance, homogeneous spaces under connected reductive groups with unimodular stabilizers). Let $v$ be a place of $k$ and let $\operatorname{Ht}_v:X(k_v)\to \BR_{\geq 0}$ be a fixed $v$-adic height function (cf. \eqref{eq:height}). Let $\boldsymbol{\delta}:X(\RA_{k})\to\BR_{\geq 0}$ be a locally constant function which is not identically zero. Recall that  (cf. \cite[Definitions 2.2 \& 2.3]{Borovoi-Rudnick}) $X$ is \emph{Hardy--Littlewood off $v$} (with respect to $\boldsymbol{\delta},\rmm_X$) if $X(k_v)$ is not compact and for any compact set $B_v\subset X(\RA_{k}^{v})$, on writing $\BB_v(N)=B_v\times\{\bx\in X(k_v):\operatorname{Ht}_v(\bx)\leq N\}\subset X(\RA_{k})$\footnote{Note that since $\BB_v(N)$ has compact closure in $X(\RA_{k})$, and $X(k)$ is discrete in $X(\RA_{k})$, the cardinality $\#\left(X(k)\cap \BB_v(N)\right)$ is finite.}, $$\#\left(X(k)\cap \BB_v(N)\right)\sim \int_{\BB_v(N)} \boldsymbol{\delta}\operatorname{d}\rmm_X,\quad \text{as }N\to\infty.$$  The Hardy--Littlewood property is intimately tied to many analytic and dynamical methods that can be used to approach strong approximation (cf. \cite[Introduction]{Borovoi-Rudnick}). The following is the direct analogue of the \APSA\ property.
		
		\begin{defi}[cf. \cite{Cao-Huang2} Definition 1.2]
			We say that $X$ satisfies the \emph{arithmetic purity of the Hardy--Littlewood property} (\APHL\ for short) \emph{off} $v$, if any open subset $U\subset X$, whose complement has codimension at least two, is Hardy--Littlewood off $v$ (with respect to $\boldsymbol{\delta}|_{U(\RA_{k})},\rmm_X|_{U}$).
		\end{defi}
Note that the restriction $\rmm_X|_{U}$ is well-defined by (a slight generisation of) \cite[Proposition 2.1]{Cao-Huang2}. 

The first cases where the \APHL\ property are established are affine quadrics over $\BQ$ with $v=\BR$ \cite[Theorems 1.4 \& 1.5]{Cao-Huang2}. Let \begin{equation*}
    X=(q=m)\subset\BA_{\BQ}^n
\end{equation*} be an affine quadric over $\BQ$ with $X(\RA_{\BQ})\neq\varnothing$. Recall that (cf. \cite[\S6.4]{Borovoi-Rudnick}) we can associate a locally constant function $\boldsymbol{\delta}$ on $X(\RA_{\BQ})$ which is either identically one if $n\geq 4$, or takes values in $\{0,2\}$ if $n=3$ and $-m\det(q)\neq \square$. As is shown in work of Wei--Xu \cite{Wei-Xu}, in the latter case $\boldsymbol{\delta}$ serves as an indicator function detecting the Brauer--Manin locus of $X(\RA_{\BQ})$. Our second main result concerning \APHL\ off any non-archimedean place completes the remaining cases.

\begin{thm}
	Assume that $q$ is isotropic over $\BQ_{v}$, $v$ being a non-archimedean place. Assume moreover 
	\begin{itemize}
		\item either $n\geq 4$;
		\item or $n=3$ and $q$ is anisotropic over $\BQ$ \& $-m\det(q)\neq \square$.
	\end{itemize}
	Then $X$ satisfies \APHL\ off $v$ with respect to $\boldsymbol{\delta}$.
\end{thm}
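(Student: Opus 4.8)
The plan is to deduce the statement from two ingredients, both supplied by the effective equidistribution of $S$-integral points on the symmetric space $X$ ($S$ a finite set of places containing $v$ and $\infty$): the Hardy--Littlewood property for the ambient quadric $X$ off $v$, which produces the main term, and the geometric sieve for $p_0$-integral points on $X$, $p_0$ being the rational prime below $v$ (point (2) of the introduction), which controls the passage from $X$ to an open $U$ with $\codim_X(X\setminus U)\geq 2$. Fix $U=X\setminus Z$. By a standard approximation argument---sandwiching a general compact set between finite unions of basic ones, using that both the point count and $\int\boldsymbol\delta\operatorname{d}\rmm_X$ are monotone and additive in the compact set---it suffices to treat compact sets $B_v\subset U(\RA_\BQ^v)$ of the form $\prod_{w\in T\setminus\{v\}}K_w\times\prod_{w\notin T}U(\scrO_w)$ for a suitable integral model $\scrZ\subset\scrX$, a finite $T$ with $v,\infty\in T$, and compacts $K_w\subset U(\BQ_w)$. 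Let $\widehat B_v\subset X(\RA_\BQ^v)$ be obtained by replacing each $U(\scrO_w)$ by $X(\scrO_w)$. Since a $\BQ$-point of $X$ lying in $\widehat B_v\times\{\operatorname{Ht}_v\leq N\}$ belongs to $U(\BQ)$ exactly when it reduces outside $\scrZ$ modulo every $w\notin T$, one has
\[
\#\bigl(U(\BQ)\cap\BB_v(N)\bigr)=\#\bigl(X(\BQ)\cap\widehat\BB_v(N)\bigr)-E(N),
\]
with $E(N)=\#\{x\in X(\BQ)\cap\widehat\BB_v(N):\ x\bmod w\in\scrZ(\BF_w)\text{ for some }w\notin T\}$.

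For the main term, $X$ is a homogeneous space under $\SO(q)$, equivalently under $\Sp(q)$, with point stabilizer $\SO(q')$ for a non-degenerate $(n-1)$-ary form $q'$; as $q$ is isotropic over $\BQ_v$, the group $\SO(q)(\BQ_v)$ has no compact factor, and the $\Z[1/p_0]$-integral points of $X$ contained in the fixed compact region away from $v$ form finitely many $\Sp(q)(\Z[1/p_0])$-orbits. Effective equidistribution of these orbits in the adelic space, together with the identification of the limit with the Tamagawa measure, gives $\#(X(\BQ)\cap\widehat\BB_v(N))\sim\int_{\widehat\BB_v(N)}\boldsymbol\delta\operatorname{d}\rmm_X$: when $n\geq4$ one has $\boldsymbol\delta\equiv1$; when $n=3$ with $q$ anisotropic over $\BQ$ and $-m\det(q)\neq\square$, the stabilizer is an anisotropic torus, the adelic orbits of $X(\BQ)$ fill out exactly the Brauer--Manin locus, which by Wei--Xu is cut out by $\boldsymbol\delta\in\{0,2\}$, and summing the orbitwise equidistribution over these orbits again yields the asymptotic $\sim\int\boldsymbol\delta\operatorname{d}\rmm_X$. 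The same holds with an arbitrary congruence condition modulo an integer imposed on the compact set, and---crucially---with an error term that is a negative power of $N$ times a fixed power of the modulus; this effectivity is the quantitative heart of the sieve below.

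For $E(N)$ one runs the geometric sieve, splitting it as $\#(A\cup B)$ with $A$ the contribution of moduli $w\leq N^{\eta}$ and $B$ that of moduli $w>N^{\eta}$, for a small $\eta>0$ admissible in view of the effectivity. By inclusion--exclusion over the finitely many primes $w\notin T$ with $w\leq N^{\eta}$ and the congruence form of the main term, $\#A=\bigl(1-\prod_{w\notin T}(1-r_w)+o(1)\bigr)\,c_X N^{a}$, where $c_XN^{a}\sim\#(X(\BQ)\cap\widehat\BB_v(N))$ and $r_w=\rmm_{X,w}(\scrZ(\scrO_w))/\rmm_{X,w}(X(\scrO_w))\ll w^{-2}$; the bound $\ll w^{-2}$---hence convergence of $\prod(1-r_w)$ and smallness of its tail---is precisely where $\codim_X Z\geq2$ enters, via Lang--Weil for $\scrZ$ (for $n=3$ it is immediate, $Z$ being zero-dimensional). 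For $B$ one chooses $f,g$ in the coordinate ring of $X$ with $Z\subset V(f,g)$ and $\codim_X V(f,g)=2$ (possible exactly because $\codim_X Z\geq2$): then $x\bmod w\in\scrZ(\BF_w)$ forces $w\mid f(x)$ and $w\mid g(x)$, and since the numerators of $f(x),g(x)$ are $\ll N^{O(1)}$, for $w>N^{C}$ this forces $f(x)=g(x)=0$, i.e.\ $x\in V(f,g)(\BQ)\cap\widehat\BB_v(N)$, of size $O(N^{a-\epsilon'})$ as $\dim V(f,g)=\dim X-2$; the range $N^{\eta}<w\leq N^{C}$ is absorbed by a level-set count for the codimension-two $V(f,g)$ summed over $w$. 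Thus $E(N)=\bigl(1-\prod_{w\notin T}(1-r_w)+o(1)\bigr)c_XN^{a}$.

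Subtracting, $\#(U(\BQ)\cap\BB_v(N))\sim\prod_{w\notin T}(1-r_w)\,c_XN^{a}$, and this is exactly $\int_{\BB_v(N)}\boldsymbol\delta|_{U}\operatorname{d}(\rmm_X|_{U})$: the factor at each $w\notin T$ has become $\rmm_{X,w}(U(\scrO_w))=(1-r_w)\rmm_{X,w}(X(\scrO_w))$, while the factors at $v$, at $\infty$ and at the places of $T$, and the value of $\boldsymbol\delta$, are unchanged---the two domains differing only by the $\rmm_{X,v}$-null set $Z(\BQ_v)\cap\{\operatorname{Ht}_v\leq N\}$. With the finiteness of the counts (discreteness of $U(\BQ)$ in $U(\RA_\BQ)$ and compactness of the closure of $\widehat\BB_v(N)$), this is the Hardy--Littlewood property for $U$ off $v$. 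The crux, I expect, is obtaining the equidistribution of the integral $\Sp(q)(\Z[1/p_0])$-orbits in a genuinely effective and congruence-uniform form---soft equidistribution gives the bare main term but not the control over moduli up to $N^{\eta}$ that the sieve demands; a secondary point, confined to $n=3$, is matching the family of orbits with the Brauer--Manin locus so that the limit measure is precisely $\boldsymbol\delta\,\rmm_X$, and inside the sieve the real labour is the intermediate-modulus level-set estimate.
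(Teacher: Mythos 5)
Your high-level architecture matches the paper's: the theorem is reduced, exactly as you say, to (i) the congruence-uniform effective equidistribution of $\mathbf{Z}[p_0^{-1}]$-points on $X$ (the paper's Theorem~\ref{thm:equidistcongruence}), which gives the main term for the ambient quadric with level conditions, and (ii) a geometric sieve controlling $p_0$-integral points reducing into $\scrZ$ modulo large primes (the paper's Theorem~\ref{thm:geosieve}); the paper then cites the framework of \cite[Theorem 3.1]{Cao-Huang2} to combine (i) and (ii) into the \APHL\ statement, precisely the small-modulus/large-modulus split you describe. You also correctly identify the two delicate points: effectivity with polynomial dependence on the level, and (for $n=3$) matching the $\Sp(q)(\BZ[1/p_0])$-orbits with the Brauer--Manin locus via the Siegel--Weil/Borovoi--Rudnick formula (Appendix~\ref{se:Siegel-Weil}).

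The genuine gap is in your treatment of the ``large modulus'' part $B$ of $E(N)$, i.e.\ the geometric sieve itself. You propose to handle the range $N^\eta < w \leq N^{C}$ by ``a level-set count for the codimension-two $V(f,g)$ summed over $w$'', and $w > N^{C}$ by forced vanishing. The naive level-set count, $\sum_{w} \#\scrZ(\BF_w)\cdot (N/w)^{n-2}$ with $\#\scrZ(\BF_w)\asymp w^{n-3}$ from Lang--Weil, gives $N^{n-2}\sum_w w^{-1}$, which over any range $[N^\eta, N^{C}]$ is $\asymp N^{n-2}\log(C/\eta)$ --- the \emph{same} order as the main term, not a genuine saving. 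The paper therefore needs three distinct and substantially harder tools here (Propositions~\ref{prop:Vbd2} and~\ref{prop:Vbd3}): the Browning--Gorodnik lattice argument expressing the mod-$p$ condition as a sublattice constraint on the blown-up quadric $\CX_h$ for $p_0^{\alpha h}< p\leq p_0^h$; the Ekedahl/Browning--Heath-Brown geometric sieve for the ``bad'' fibre locus when $p> p_0^h$; and, crucially, the \emph{half-dimensional sieve} (representation by a positive-definite binary form) for the ``good'' fibres when $p>p_0^h$, which is what wins the extra $(\log p_0^h)^{-1/2}$ and makes the error term genuinely smaller than the main term. Your proposal contains none of these, and in particular the half-dimensional sieve is essential and entirely absent; you flag the intermediate-modulus count as ``the real labour'' but offer no mechanism that would actually deliver a $o(N^{n-2})$ bound, and the naive count provably does not.

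A minor imprecision: your forced-vanishing range ``$w>N^{C}$'' needs care because $x\in\CX(\BZ[p_0^{-1}])$ has denominators; the paper clears them by substituting $\by=p_0^h\bx$ and passing to the rescaled quadric $q=p_0^{2h}m$, after which the numerators of $f,g$ are honest integers of size $\ll p_0^{h\deg f}$ and divisibility by $w>p_0^{h\deg f}$ (not $p_0$) does force vanishing. This bookkeeping is benign, but the statement as you wrote it elides the $p_0$-power denominators.
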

		
		See \cite[\S IV.2 Theorem 6]{Serre} and \cite[Chapter VI \S63]{OMeara} for necessary and sufficient conditions for the quadratic form $q$ to be isotropic over $\BQ_{v}$, $v$ being non-archimedean. In particular, when $n\geq 5$ this condition becomes vacuous.

\subsection*{Methods of proof and organisation of the article}
Our approach relies on a number of upgraded  arguments established in previous work of the first and second authors \cite{Cao-Huang1,Cao-Huang2}, with the crucial dynamical input from equidistribution of $S$-integral point with effective power-saving error terms (see \S\ref{se:effective}), extending work of Benoist--Oh \cite{Benoist-Oh} and Browning--Gorodnik \cite{G-N}. 

The application of effective equidistribution is twofold. Firstly, it allows to execute the \emph{affine linear sieve} (first developed by Sarnak et. al) so as to produce ``almost-prime'' $S$-integral points. Our way of running this sieve (see \S\ref{se:affinelinearsieve}) closely ties to the Hardy--Littlewood property, which (we believe) appears more conceptural than previous work. Combining this with the sophisticated fibration method developed in \cite{Cao-Huang1} leads to the \APSA\ property for three-dimensional groups and eventually yields the same for affine quadrics (see \S\ref{se:APSA}). This sieve being executed over the number field $k$ without recurring to the Weil restriction (cf. \cite[Proof of Corollary 3.3]{Cao-Huang1}), this is the key reason why we can achieve \APSA\ off any single place. Secondly, in showing the \APHL\ property, which is the content of \S\ref{se:geomsieve}, it is useful in handling a certain range of ``bad primes''.

It has been realised in \cite{Browning-HB,Cao-Huang2} that in course of establishing the \APHL\ property, the treatment of remainder terms is essentially built on the \emph{geometric sieve} (first introduced by Ekedahl \cite{Ekedahl} for affine spaces). We prove a $p$-integral version of this in \S\ref{se:geomsieve}, following the strategy in \cite{Cao-Huang2}.

As slight generalization of work of Borovoi--Rudnick \cite{Borovoi-Rudnick}, in Appendix \ref{se:Siegel-Weil} we formulate a version of Siegel--Weil type formulas. They refer to summing over the leading constant showing up when one counts points in each $G$-orbit, which was usually imprecise in dynamical methods, gives rise to the Hardy--Littlewood density of the homogeneous space $H\bs G$.  

\subsection*{Notation}
Throughout this article, $k$ is a number field and $\calO$ denotes the ring of integers. We write $\Omega_k$ for the set of all places and $\infty_k$ for the set of infinite places. For $S$ any finite set of places of $k$, let $\mathcal{O}_{S}$ denote the ring of $(S\cup\infty_k)$-integers. For $v\in\Omega_{k}$, $k_v$ denotes the $v$-adic local field, and if $v$ is finite, $\CO_{v}$ denotes the ring of integers of $k_v$ and $\mathfrak{m}_{v}$ the maximal ideal.
 For a prime ideal $\mathfrak{p}$ of $\CO$, we write $\BF_{\mathfrak{p}}$ for its residue field. For $\mathfrak{b}$ an ideal of $\CO_{S}$, we write $\mathbf{N}(\mathfrak{b}):=\#\left(\CO_{S}/\mathfrak{b}\right)$ for the norm of $\mathfrak{b}$. 
 
A $k$-variety is a separate scheme of finite type over $k$. For any $k$-variety $X$, we write $X_S=\prod_{v \in S} X(k_v)$, and $X(\RA_k^S)$ be the adelic space off $S$. If $\CX$ is an $\CO_{S}$-scheme, 
For each place $v\not\in (S\cup\infty_k)$ and integer $m$ the reduction modulo $\mathfrak{m}_{v}^m$ maps is denoted by \begin{equation}\label{eq:modvm}
	\operatorname{Mod}_{v,m}:\CX(\CO_v)\longrightarrow \CX(\CO_v/\mathfrak{m}_{v}^m).
\end{equation}
	
	\section{Effective counting of $S$-integral points}\label{se:effective}
	
	In this section we fix the following data:
	\begin{setting}\label{setting}\hfill
		\begin{itemize}
			\item $G$ is a connected, simply connected, almost $k$-simple linear algebraic group over $k$.
			\item $V$ is an $n$-dimensional $k$-vector space together with a fixed isomorphism $V \cong k^n$.
			\item $\iota: G \to GL(V)$ is a faithful representation over $k$.
			\item $S$ is a finite set of places of $k$ containing all infinite places $v$ over which $G(k_v)$ has no compact factors (which may not necessarily exist), and write $S = S_{\infty} \sqcup S_f$ as the disjoint union of infinite and finite places. 
			\item $H \subset G$ is a symmetric subgroup whose identity component has no nontrivial $k$-characters.
			\item $\Gamma := \{ g \in G_S \mid \iota(g) \calO_{S}^n = \calO_{S}^n \}$, $Y_S:= \Gamma \bs G_S$, $Z_S:= \Gamma\bs \Gamma H_S$.
			\item $z_0 \in V$ whose stabilizer in $G$ is $H$, and $X:= z_0.G$.
			\item Fix Haar measures \( \rmm_{G} \) on \( G \), \( \rmm_{H} \) on \( H \), and \( \rmm_{X} \) on \( X \) that are compatible. 
		\end{itemize}
	\end{setting}
	We shall normalise the measures $\rmm_{G},\rmm_{H}$ such that \begin{equation}\label{eq:normalisation}
		|\rmm_{Y_S}| :=\rmm_{G_S}\left(Y_S\right)=1,\quad  |\rmm_{Z_S}|:=\rmm_{H_S}\left(Z_S\right) = 1. 
	\end{equation} 
	
	
		\subsection{Main theorem on effective equidistribution of $S$-integral points} 	Our main theorem below is an $S$-adic generalization of work of Browning-Gorodnik \cite[Theorem 2.1]{Browning-Gorodnik} on the one hand. It also upgrades work of Benoist-Oh \cite[Theorem 1.3]{Benoist-Oh} with congruence conditions on the other hand. To state it, we adopt the following notation:
	
	\begin{itemize}
		\item  If $\mathfrak{l}$ is an ideal of $\mathcal{O}_{k,S}$, we let 
	\begin{equation}\label{eq:gammal}
			\Gamma(\mathfrak{l}):= \ker\left(
		\Gamma  \to \mathrm{GL}_n (\mathcal{O}_S )
		\to \mathrm{GL}_n (\mathcal{O}_S/\mathfrak{l} \calO_S)
		\right)
	\end{equation}
		and  \[
		Y_{S}(\mathfrak{l}) := \Gamma(\mathfrak{l})\bs G_S,\quad
		Z_S(\mathfrak{l}):= \Gamma(\mathfrak{l})\bs \Gamma(\mathfrak{l})H_S.
		\]
		\item For $\bx= (x_1,\cdots,x_n)\in V$, let 
		\begin{equation}\label{eq:height}
		\operatorname{Ht}_{v}(\bx):=
		\begin{cases}
			\max\left\{ \normm{x_1}_{v},\cdots \normm{x_n}_{v} \right\}, & v \in S_f; \\
			(\sum_{i=1}^n |x_i|_{v}^2)^{1/2},& v \in S_{\infty}.
		\end{cases}\;
		\end{equation}
		\item We fix a partition $S=S_1\sqcup S_2$ where $S_1\subset S$ consists only of places over which $G$ has no compact factors. Let $\mathfrak{D}_{S_2} \subset X_{S_2}$ be a region with piecewise smooth boundary and compact closure. Also let $$B_{S_1}(N):= \left\{
		\bx \in X_{S_1} \midd \prod_{v\in S_1}\operatorname{Ht}_v(\bx) \leq N
		\right\}$$ for $N\in \Z^+$ and write for simplicity \begin{equation}\label{eq:BSN}
			\BB_S(N):=B_{S_1}(N) \times \mathfrak{D}_{S_2}.
		\end{equation}
	\end{itemize}

	\begin{thm}\label{thm:effectivemain}
		Assume that $S_1\neq\varnothing$. Then there exist constants \( \kappa > 1,\;0<\delta<1  \) depending on the data in Setting \ref{setting} such that uniformly for every ideal \( \mathfrak{l}\) of \(\calO_{S} \), \( \gamma \in \Gamma \), and \( N \in \mathbb{Z}^+ \),
		\[
		\left\vert z_0 .\gamma \Gamma(\mathfrak{l}) \cap 	\BB_S(N) \right\vert  -
		\frac{  \normm{\rmm_{Z_S(\mathfrak{l})}} 
		}{   \normm{\rmm_{Y_S(\mathfrak{l})} } 
		} \rmm_{X_{S}}(	\BB_S(N) ) = O \left(\mathbf{N}(\mathfrak{l})^{\kappa} \cdot \rmm_{X_{S}}(\BB_{S}(N)) ^{1-\delta}\right),
		\] where the implied constant may also depend on $\mathfrak{D}_{S_2}$.
	\end{thm}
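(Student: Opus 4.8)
The strategy is to reduce the theorem to an effective equidistribution statement for translated $H_S$-orbits on the homogeneous space $Y_S(\mathfrak{l}) = \Gamma(\mathfrak{l})\backslash G_S$, and then to feed this into a counting argument of the type pioneered by Duke--Rudnick--Sarnak and Eskin--McMullen, tracking the dependence on the level $\mathfrak{l}$. Concretely, I would first observe that $z_0.\gamma\Gamma(\mathfrak{l})$ is in bijection with a coset space: since $\mathrm{Stab}_G(z_0) = H$, the orbit map identifies $z_0.\gamma\Gamma(\mathfrak{l})$ with $H_S\backslash H_S\gamma\Gamma(\mathfrak{l})$ inside $H_S\backslash G_S$, and counting points of the orbit in $\BB_S(N)$ becomes counting $\Gamma(\mathfrak{l})$-translates of $\gamma^{-1}H_S$ meeting the preimage $\widetilde{\BB_S(N)} = \{g \in G_S : z_0.g \in \BB_S(N)\}$. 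Dualizing, $\left\vert z_0.\gamma\Gamma(\mathfrak{l}) \cap \BB_S(N)\right\vert = \int_{Z_S(\mathfrak{l})} F_N \, \diff\rmm_{H_S}$, where $F_N(h) = \sum_{\gamma' \in \Gamma(\mathfrak{l})} \mathbf{1}_{\widetilde{\BB_S(N)}}(\gamma^{-1}h\gamma')$ is the automorphic counting function on $Y_S(\mathfrak{l})$ obtained by periodizing $\mathbf{1}_{\widetilde{\BB_S(N)}}$. The main term $\normm{\rmm_{Z_S(\mathfrak{l})}}/\normm{\rmm_{Y_S(\mathfrak{l})}} \cdot \rmm_{X_S}(\BB_S(N))$ is exactly the ``expected value'' $\langle F_N, \mathbf{1}\rangle_{Y_S(\mathfrak{l})}$ integrated against the $H_S$-orbit, up to the measure normalizations.

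The core estimate is then: for a nice test function (or indicator of a nice set) $\psi$ on $Y_S(\mathfrak{l})$ of the shape $F_N$, one has
\[
\int_{Z_S(\mathfrak{l})} F_N \, \diff\rmm_{H_S} \;=\; \frac{\normm{\rmm_{Z_S(\mathfrak{l})}}}{\normm{\rmm_{Y_S(\mathfrak{l})}}} \int_{Y_S(\mathfrak{l})} F_N \, \diff\rmm_{G_S} \;+\; O\!\left(\mathbf{N}(\mathfrak{l})^\kappa \cdot \rmm_{X_S}(\BB_S(N))^{1-\delta}\right).
\]
To get this I would use a spectral gap: the representation of $G_S$ on $L^2_0(Y_S(\mathfrak{l}))$ has a uniform (in $\mathfrak{l}$) spectral gap coming from property $(\tau)$ for the congruence tower of $G$ (Clozel, or Burger--Sarnak plus the known bounds towards Ramanujan for the relevant inner forms), combined with the local spectral gap at the places in $S_f$. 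Smoothing $\mathbf{1}_{\widetilde{\BB_S(N)}}$ at scale $\eta$ in the archimedean directions and at a suitable finite level in the $S_f$-directions, one separates $F_N$ into its $Y_S(\mathfrak{l})$-average plus a ``fluctuation'' part whose $L^2$-norm is controlled by the spectral gap; integrating the fluctuation over the orbit $Z_S(\mathfrak{l})$ and invoking a Sobolev-type bound (or a trace estimate on $Z_S(\mathfrak{l})$) gives a power-saving in $\rmm_{X_S}(\BB_S(N))$, at the cost of a fixed power of $\mathbf{N}(\mathfrak{l})$ from the index $[\Gamma : \Gamma(\mathfrak{l})]$ and from the volume of $Z_S(\mathfrak{l})$. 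The smoothing error — the difference between counting against $\mathbf{1}_{\widetilde{\BB_S(N)}}$ and its mollification — is controlled by the ``well-roundedness'' of the sets $B_{S_1}(N)$ (a standard property of the height balls attached to a faithful representation, using that $\partial\mathfrak{D}_{S_2}$ is piecewise smooth) and contributes another power-saving term; optimizing $\eta$ against the spectral term yields the final $\delta$.

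The step I expect to be the main obstacle is making the dependence on $\mathfrak{l}$ genuinely uniform and polynomial simultaneously with the power-saving in $N$. Two technical points drive this: (i) one needs the spectral gap for $L^2_0(Y_S(\mathfrak{l}))$ to be independent of $\mathfrak{l}$ — this is property $(\tau)$, which is available for $G$ simply connected almost $k$-simple by Clozel's theorem, but one must also control the continuous spectrum / Eisenstein contribution uniformly; (ii) the volume $\normm{\rmm_{Z_S(\mathfrak{l})}}$ and the injectivity radius of $Z_S(\mathfrak{l})$ both degrade polynomially in $\mathbf{N}(\mathfrak{l})$, and these must be bounded above and below by fixed powers of $\mathbf{N}(\mathfrak{l})$ uniformly in $\mathfrak{l}$, which requires a careful treatment of the orbit $\Gamma(\mathfrak{l})H_S$ (in particular that it is closed, with finite volume, uniformly — this uses that $H^\circ$ has no nontrivial $k$-characters, so $Z_S$ has finite volume by Borel--Harish-Chandra, and the congruence subgroup property of $H$). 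Once these uniformities are in place, the periodization-plus-spectral-gap machine runs as in Browning--Gorodnik's archimedean argument, and the $S$-adic generalization is a matter of bookkeeping over the product $G_S = \prod_{v\in S} G(k_v)$, handling the non-archimedean factors via their action on the Bruhat--Tits building and the corresponding local well-roundedness of the max-norm balls.
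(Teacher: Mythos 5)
The overall toolkit you invoke --- property $(\tau)$/uniform spectral gap, effective decay of matrix coefficients, thickening/smoothing, effective well-roundedness of the height balls, and an optimization of the smoothing scale against the spectral saving --- is exactly the toolkit the paper uses (following Benoist--Oh and Browning--Gorodnik). But the central ``dualization'' identity you state is not correct, and the error propagates through the shape of the ``core estimate'', so as written the argument would not close.

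The count $\left\vert z_0.\gamma\Gamma(\mathfrak{l})\cap\BB_S(N)\right\vert$ equals the \emph{pointwise evaluation} $F_N^{\mathfrak{l}}(o.\gamma)$ of the periodized counting function
\[
F_N^{\mathfrak{l}}(o.g)=\sum_{\lambda\in(\Gamma(\mathfrak{l})\cap H_S)\backslash\Gamma(\mathfrak{l})}\bmone_{\BB_S(N)}(z_0.\lambda g),
\]
a function on $Y_S(\mathfrak{l})$. It is \emph{not} $\int_{Z_S(\mathfrak{l})}F_N\,\diff\rmm_{H_S}$: the preimage $\widetilde{\BB_S(N)}$ is left $H_S$-invariant, so that integral does not reproduce a discrete count (for generic $\gamma$ the expression you wrote down is not even finite, and for $\gamma=e$ it diverges term by term). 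Because the quantity of interest is a point evaluation, you cannot apply an $L^2$/spectral-gap estimate to $F_N^{\mathfrak{l}}$ directly --- $F_N^{\mathfrak{l}}$ is an indicator periodization, and its Sobolev norms blow up. This is precisely what the thickening trick is for: one sandwiches $F_N^{\mathfrak{l}}(o.\gamma)$ between $\int F_{N,\varepsilon}^{\mathfrak{l},\mp}(o.\gamma g)\,\alpha_\varepsilon'(g)\,\diff\rmm_{G_S}(g)$ with $\alpha_\varepsilon'$ a normalized smooth bump adapted to the product $H_{\varepsilon}W_{\varepsilon}$, then folds and unfolds to get
\[
\int_{B_{N,\varepsilon}^{\pm}}\Bigl(\int_{Z_S(\mathfrak{l})}\alpha^{\mathfrak{l}}_{\varepsilon,\gamma}(yg)\,\diff\rmm_{Z_S(\mathfrak{l})}(y)\Bigr)\diff\rmm_{X_S}(z_0.g).
\]
The $Z_S(\mathfrak{l})$-average that shows up is of the \emph{smooth bump} $\alpha^{\mathfrak{l}}_{\varepsilon,\gamma}$, translated by $g$ and integrated over $g$ as $g$ sweeps out the ball; it is not the $Z_S(\mathfrak{l})$-average of $F_N$. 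Effective equidistribution of the translated closed $H_S$-orbit (Theorem~\ref{theorem_H}) is then applied to $\alpha^{\mathfrak{l}}_{\varepsilon,\gamma}$, whose Sobolev norm is polynomially controlled in $\varepsilon^{-1}$, and the final saving comes from combining this with the well-roundedness bound $\rmm_{X_S}(B^+_{N,\varepsilon}\setminus B^-_{N,\varepsilon})\leq C_0\varepsilon^{\kappa_0}\rmm_{X_S}(\BB_S(N))$ and optimizing $\varepsilon$ as a power of $\rmm_{X_S}(\BB_S(N))^{-1}$.

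Two further remarks on the ``core estimate''. First, even as a statement about a nice $\psi$, what you need is the comparison of the \emph{translated} $Z_S(\mathfrak{l})$-average $\int_{Z_S(\mathfrak{l})}\psi(yg)\,\diff\rmm^{\bmone}_{Z_S(\mathfrak{l})}(y)$ to $\int_{Y_S(\mathfrak{l})}\psi\,\diff\rmm^{\bmone}_{Y_S(\mathfrak{l})}$, with an error decaying in $\prod_{v\in S}\mathrm{Ht}_v(z_0.g)$; the non-translated comparison you wrote is vacuous after unfolding because the translate $g$ is integrated over the entire ball. Second, $\varepsilon$-dependence enters $\calS^{m}(\alpha^{\mathfrak{l}}_{\varepsilon,\gamma})$ through $\varepsilon^{-\kappa_2(m)}$, and $\mathfrak{l}$-dependence enters through $\normm{\rmm_{Y_S(\mathfrak{l})}}$ and $\normm{\rmm_{Z_S(\mathfrak{l})}}$; you correctly anticipate that these contribute polynomially in $\mathbf{N}(\mathfrak{l})$, but the mechanism is bookkeeping these norms and then choosing a fixed $\kappa_8$ with $\normm{\rmm_{Z_S(\mathfrak{l})}}/\normm{\rmm_{Y_S(\mathfrak{l})}}\leq\mathbf{N}(\mathfrak{l})^{\kappa_8}$, rather than a spectral-gap-in-$\mathfrak{l}$ argument: the spectral gap is uniform in $\mathfrak{l}$ (indeed by property $(\tau)$, as you say), but the $\mathbf{N}(\mathfrak{l})^\kappa$ factor in the final statement comes from volume normalization and the Lang--Weil count of congruence classes, not from any degradation in the mixing rate.

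So: same circle of ideas, same key lemma in spirit (effective equidistribution of translated periodic $H_S$-orbits in $Y_S(\mathfrak{l})$, cf.\ Theorem~\ref{theorem_H}), but the unfolding identity at the heart of your argument is wrong, and that is the step you would need to repair before the proof can be carried out.
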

	
	Note that the Haar measures $\rmm_{G},\rmm_{H}$ induce measures on the quotients: \( \rmm_{Y_S(\mathfrak{l})} \) on \( Y_S(\mathfrak{l}) \), \( \rmm_{Z_S(\mathfrak{l})} \) on \( Z_S(\mathfrak{l}) \) that are locally the same as \( \rmm_{G_S} \) or \( \rmm_{H_S} \). Under the normalisation \eqref{eq:normalisation},
	\[
	\normm{\rmm_{Y_S(\mathfrak{l})} } = \normm{\Gamma(\mathfrak{l})\bs \Gamma},
	\quad
	\normm{\rmm_{Z_S(\mathfrak{l})}}  = |\Gamma(\mathfrak{l}) \cap H_S \bs \Gamma \cap H_S|.
	\]
	
	There is also a small generalization that we now turn to. 
	\begin{thm}\label{thm:GtimesG}
		Assume that $G=G_1 \times G_1$, where $G_1$ is a connected, simply connected, almost $k$-simple linear algebraic group, and that $H\subset G$ is the diagonal embedding of $G_1$. Keeping all the remaining assumptions, the conclusion of Theorem \ref{thm:effectivemain} holds.
	\end{thm}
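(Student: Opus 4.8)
The plan is to re-run the proof of Theorem~\ref{thm:effectivemain} essentially verbatim, isolating the single place where almost $k$-simplicity of $G$ was used and replacing the corresponding input by its analogue for $G=G_1\times G_1$ with $H=\Delta G_1$. First one records the geometry: $(G_1\times G_1,\Delta G_1)$ is again a symmetric pair, being the fixed locus of the flip involution $(g_1,g_2)\mapsto(g_2,g_1)$, and $X\cong H\bs G\cong G_1$ via $(g_1,g_2)H\mapsto g_1^{-1}g_2$, with $z_0$ corresponding to the identity of $G_1$. Hence the Cartan-type decomposition of $X$, the polynomial volume growth and regularity of the height balls $B_{S_1}(N)$, and the smoothness hypotheses on $\mathfrak{D}_{S_2}$ that drive the ``thickening'' and boundary-term estimates in the proof of Theorem~\ref{thm:effectivemain} are purely facts about the structure theory of $G_1$, hence carry over unchanged. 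As in that proof, everything then reduces to an effective equidistribution statement for the translated orbits $Z_S(\mathfrak{l})$ in $Y_S(\mathfrak{l})$, uniformly in $\mathfrak{l}$, which is in turn governed by a uniform-in-$\mathfrak{l}$ decay estimate for matrix coefficients on $L^2_0(\Gamma(\mathfrak{l})\bs G_S)$ along the directions that expand the height balls on $X$.

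The one substantive change is in this spectral estimate, since $G_1\times G_1$ is not almost $k$-simple. Writing $G_S=G_{1,S}\times G_{1,S}$, the expanding directions relevant to the height on $X\cong G_1$ lie along a single copy of $G_{1,S}$, and the required decay on the orthogonal complement of the constants follows from property $(\tau)$ for the simply connected almost $k$-simple group $G_1$ (equivalently, for the congruence quotients of its $S$-arithmetic lattices) applied factorwise: one invokes the Howe--Moore property when $\Gamma$ is irreducible in $G_{1,S}\times G_{1,S}$, and the tensor decomposition of $L^2(\Gamma\bs G_S)$ together with $(\tau)$ for one factor when $\Gamma$ is commensurable to a product. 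The trivial subrepresentation contributes precisely the main term $\frac{|\rmm_{Z_S(\mathfrak{l})}|}{|\rmm_{Y_S(\mathfrak{l})}|}\,\rmm_{X_S}(\BB_S(N))$, and its complement is controlled with a power saving together with a polynomial loss in $\mathbf{N}(\mathfrak{l})$; this is exactly the quantitative input consumed by the proof of Theorem~\ref{thm:effectivemain}, so feeding it back gives the asserted error $O(\mathbf{N}(\mathfrak{l})^{\kappa}\,\rmm_{X_S}(\BB_S(N))^{1-\delta})$, with $\kappa>1$, $0<\delta<1$ now depending on $G_1$, $\iota$, $S$ (and $\mathfrak{D}_{S_2}$).

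The main obstacle is precisely making this spectral gap uniform over all ideals $\mathfrak{l}$ for the product group: one must verify that the height-expanding translations act on $L^2_0(\Gamma(\mathfrak{l})\bs G_S)$ with a spectral gap bounded below independently of $\mathfrak{l}$. This relies on property $(\tau)$ for $G_1$ itself — rather than on any simplicity of $G_1\times G_1$, which fails — together with the fact that the relevant expanding direction has no almost-invariant unit vectors in $L^2_0$. Once this is secured, the remainder of the proof of Theorem~\ref{thm:effectivemain} — the thickening argument, the volume comparison, and the treatment of the $S_2$-component $\mathfrak{D}_{S_2}$ — transfers mechanically, and Theorem~\ref{thm:GtimesG} follows.
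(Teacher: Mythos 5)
Your proposal is correct and follows essentially the same route as the paper, which likewise reduces the theorem to a single modification of the effective matrix-coefficient decay bound (Theorem~\ref{theorem_B}): the paper replaces $\widetilde{\xi}_S(g)$ by $\widetilde{\xi}^{\max}_S(g_1,g_2):=\max\{\widetilde{\xi}_S(g_1),\widetilde{\xi}_S(g_2)\}$, which is exactly the bound you obtain from property~$(\tau)$ for $G_1$ applied factorwise through the tensor decomposition of $L^2(\Gamma\backslash G_S)$, the remainder of the argument (thickening, well-roundedness, wrap-up) transferring verbatim as you say. One small inaccuracy worth flagging: since $G_1\times G_1$ is a product over $k$, the $S$-arithmetic group $\Gamma$ is automatically of product type, so your alternative branch invoking Howe--Moore for an irreducible lattice never arises in this setting.
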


	In the remaining part of this section we outline the proof of Theorems \ref{thm:effectivemain} and \ref{thm:GtimesG}. The crucial input for 
	\cite[Theorem 2.1]{Browning-Gorodnik} was an effective decay of matrix coefficients from \cite{Kleinbock_Margulis_logarithm_laws}, which needs to be replaced by an S-adic variant (Theorem \ref{theorem_B} below). This follows from the proof of \cite[Theorem 3.22]{GorOhMau08}. After that, one essentially repeats the proof in \cite{Benoist-Oh}. To avoid redundancy and make the proof transparent at the same time, we choose to state rigorously how several intermediate steps in \cite{Benoist-Oh} are modified, give a sketch of proof when we feel necessary. Then we explain the (standard) procedure of deducing Theorems \ref{thm:effectivemain} and \ref{thm:GtimesG} from them. 
		In \S\ref{se:affquavol} and \S\ref{se:volasy} we briefly discuss the computation of volume asymptotics for  affine quadrics and semisimple groups, following \cite{Chamber-Loir_Tschinkel_2010_Igusa_integral}.
		
	\subsection{Intermediate lemmas}\label{sec:intermediatelemma}
    The proof of Theorem \ref{thm:effectivemain} is divided into several parts just as in \cite{Benoist-Oh}:
    \begin{itemize}
        \item effective well-roundedness: 
        \ref{lemma_A};
		\item   effective mixing: 	$\ref{theorem_B}
			\implies 
			\ref{theorem_C}$;
		\item effective equidistribution: $\ref{theorem_C}, 
			\,\ref{lemma_D}, 
			\,\ref{lemma_E}, 
			\,\ref{lemma_F}, 
			\, \ref{lemma_G} 
			\implies 
			\ref{theorem_H}$;
		\item Wrap-up: $\ref{lemma_A},\,\ref{lemma_E},\,
			\ref{theorem_H} \implies
			\ref{thm:effectivemain}$.
    \end{itemize}
    In this subsection, we only state the modified statements for the first three parts. The last wrap-up step is carried out in \S\ref{se:pfeffectivemain}.

	We start with the
	effective well-roundedness (see \cite[Definition 13.1]{Benoist-Oh}) of the family $(\BB_S(N))$.
	For $\varepsilon>0$, let $$\mathcal{N}^{\infty}_{\ep} := \{ g  \in G_S : \|\iota(g) - \text{id}\|_{v} \leq \varepsilon, \; \forall \,v \in \mathcal{S}_{\infty} \},$$ and 
	$$B^+_{N, \ep} := \BB_S(N) \cdot \calN^{\infty}_{\ep},\quad
			B^-_{N, \ep} := \bigcap_{b\in \calN^{\infty}_{\ep}} \BB_S(N)\cdot b.$$
	The following is \cite[Proposition 14.2]{Benoist-Oh}.
	\begin{lem}\label{lemma_A}
	The following hold for every \( N \in \mathbb{Z}^+ \).
		\begin{enumerate}
			\item There exists an open compact subgroup \( U_f^0 \subset G_{S_f} \) such that \( \BB_S(N) \) is \(  U_f^0  \)-invariant;
			\item There exist constants \( \kappa_0, C_0 > 1 \) such that for all \( \varepsilon  \in (0, 1) \),
			\[
			\rmm_{X_S}({B^+_{N, \ep}} \setminus B^-_{N,\ep}) \leq C_0  \ep^{\kappa_0} \rmm_{X_S}({B_{N}}).
			\]
			\item For all \( d  \in (0, +\infty)\), there exists \( \delta_0\in (0, 1) \) such that for  all \( \varepsilon \in (0, 1) \),
			\[
			\int_{B^+_{N,\ep}} \Big (  {\prod_{v \in S}\mathrm{Ht}_{v}}(z) \Big)^{-d}\, \diff\rmm_{X_S}(z) 
			\leq C_0  \rmm_{X_S}{(\BB_S(N))}^{1 - \delta_0}.
			\]
		\end{enumerate}
	\end{lem}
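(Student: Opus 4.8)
This is \cite[Proposition 14.2]{Benoist-Oh}, and the plan is to run that proof in the $S$-adic setting, where only a few points need adjustment. The essential input is the volume behaviour of the height balls $B_{S_1}(N)$ recalled in \S\ref{se:affquavol}--\S\ref{se:volasy}: writing $v(t):=\rmm_{X_{S_1}}(B_{S_1}(t))$, the analysis of the associated Igusa-type integrals following Chambert-Loir--Tschinkel \cite{Chamber-Loir_Tschinkel_2010_Igusa_integral} yields an asymptotic $v(t)\sim c\,t^{a}(\log t)^{b}$ with $a>0$, $b\ge 0$ depending only on the data of Setting \ref{setting}, together with the thin-shell bound $v((1+\ep)t)-v((1-\ep)t)\ll\ep\,v(t)$ and the multiplicativity $v(\lambda t)\asymp_\lambda v(t)$, both uniform for $\ep\in(0,1)$ and for $\lambda$ in compacts of $(0,\infty)$. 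Since $\rmm_{X_S}$ is the product of the local measures on the $X(k_v)$, $v\in S_1$, with $\rmm_{X_{S_2}}$, and $\mathfrak{D}_{S_2}$ has finite positive $\rmm_{X_{S_2}}$-measure, one has $\rmm_{X_S}(\BB_S(N))\asymp v(N)$, and all three assertions reduce to their $S_1$-analogues together with the regularity of $\mathfrak{D}_{S_2}$.

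For assertion (1), at each finite $v\in S_1$ the local height is the sup-norm $\|\iota(\cdot)\|_v$, which is invariant under $\GL_n(\CO_v)$, hence under the open compact subgroup $U_v^0:=\iota^{-1}(\GL_n(\CO_v))$ of $G(k_v)$; thus $B_{S_1}(N)$ is $\prod_{v\in S_1\cap S_f}U_v^0$-invariant for every $N$. At the finite places of $S_2$ we may take $\mathfrak{D}_{S_2}$ to be defined by congruence conditions, hence invariant under an open compact subgroup $U_v^0$, and $U_f^0:=\prod_{v\in S_f}U_v^0$ works.

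For assertion (2), by (1) we may assume the perturbing set $\calN^\infty_\ep$ is trivial at all finite places, so it moves only the archimedean coordinates, where $\|\iota(g)-\id\|_v\le\ep$ forces $\operatorname{Ht}_v(\bx\,g)=(1+O(\ep))\operatorname{Ht}_v(\bx)$. Hence there is $C>1$ with $B_{S_1}((1-C\ep)N)\times\mathfrak{D}_{S_2}^-\subseteq B^-_{N,\ep}\subseteq B^+_{N,\ep}\subseteq B_{S_1}((1+C\ep)N)\times\mathfrak{D}_{S_2}^+$, where $\mathfrak{D}_{S_2}^\pm$ are the inner and outer $C\ep$-neighbourhoods of $\mathfrak{D}_{S_2}$ (unchanged at finite places). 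Bounding $B^+_{N,\ep}\setminus B^-_{N,\ep}$ by $(B_{S_1}((1+C\ep)N)\setminus B_{S_1}((1-C\ep)N))\times\mathfrak{D}_{S_2}^+$ together with $B_{S_1}((1+C\ep)N)\times(\mathfrak{D}_{S_2}^+\setminus\mathfrak{D}_{S_2}^-)$, the thin-shell bound controls the first piece by $\ll\ep\,v(N)$, and the piecewise smoothness of $\partial\mathfrak{D}_{S_2}$ gives $\rmm_{X_{S_2}}(\mathfrak{D}_{S_2}^+\setminus\mathfrak{D}_{S_2}^-)\ll\ep$, so the second piece is $\ll\ep\,v(N)$ as well; since $v(N)\asymp\rmm_{X_S}(\BB_S(N))$ this yields (2), the precise exponent $\kappa_0$ coming out of these estimates being the only thing used afterwards. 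For assertion (3), since $B^+_{N,\ep}\subseteq B_{S_1}(C_1N)\times\mathfrak{D}_{S_2}^+$ for a fixed $C_1$ and $\prod_{v\in S_2}\operatorname{Ht}_v\asymp 1$ on the relatively compact set $\mathfrak{D}_{S_2}^+$ (a continuous positive function on a bounded subset of $X_{S_2}$), it suffices to bound $\int_{B_{S_1}(C_1N)}(\prod_{v\in S_1}\operatorname{Ht}_v)^{-d}\,\diff\rmm_{X_{S_1}}$; decomposing into dyadic shells $\{2^j\le\prod_{v\in S_1}\operatorname{Ht}_v<2^{j+1}\}$ for $0\le j\le\log_2(C_1N)+O(1)$, bounding the $j$-th shell's measure by $v(2^{j+1})\ll 2^{ja}(\log(2+2^j))^b$ and the integrand by $2^{-jd}$, the integral is $\ll\sum_j 2^{j(a-d)}(\log(2+2^j))^b$, which is $O(1)$ when $d\ge a$ and $\ll N^{a-d}(\log N)^b$ when $d<a$, and in either case is $\ll\rmm_{X_S}(\BB_S(N))^{1-\delta_0}$ once $\delta_0\in(0,\min(1,d/a))$ is chosen small enough to absorb the logarithm.

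The only substantial point is assertion (2), the effective well-roundedness: it rests entirely on the volume asymptotics for $v(t)$ and the resulting thin-shell bound, which in the $S$-adic framework is supplied by the Igusa-integral computations of \S\ref{se:affquavol}--\S\ref{se:volasy} (adapting \cite{Chamber-Loir_Tschinkel_2010_Igusa_integral}), combined with the elementary observations that $\calN^\infty_\ep$ acts trivially at the finite places and scales each archimedean height by $1+O(\ep)$; granting these, assertions (1) and (3) are routine, exactly as in \cite[\S14]{Benoist-Oh}.
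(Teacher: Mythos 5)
The paper gives no proof of this lemma; it simply cites Benoist--Oh [Proposition~14.2] and relies on the $S$-adic volume estimates recalled in \S\ref{se:affquavol}--\S\ref{se:volasy}, so your reconstruction and the paper's intended argument are the same. Two points deserve to be made explicit rather than folded into the single ``thin-shell'' claim. First, when $S_1$ consists of a single non-archimedean place, the volume function $v(t)$ satisfies only two-sided $\asymp$ bounds (the local height zeta function has a whole lattice of poles on $\Re(s)=a$), so the asymptotic $v(t)\sim ct^{a}(\log t)^{b}$ you invoke to deduce $v((1+\ep)t)-v((1-\ep)t)\ll\ep\,v(t)$ is not available in that case; however, there $\calN^{\infty}_{\ep}$ does not move the $S_1$-component at all, the entire shell $B^+_{N,\ep}\setminus B^-_{N,\ep}$ lives in the $\mathfrak{D}_{S_2}$ direction, and your piecewise-smoothness argument handles it --- but the casework between ``$S_1$ contains an archimedean place (genuine asymptotic, shell comes from both factors)'' and ``$S_1$ purely non-archimedean (only $\asymp$, shell comes only from $\mathfrak{D}_{S_2}$)'' should be spelled out. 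Second, your estimate produces $\kappa_0=1$ in assertion~(2), whereas the lemma asserts $\kappa_0>1$; this appears to be a normalization inherited from Benoist--Oh and is immaterial for the wrap-up in \S\ref{se:pfeffectivemain}, which only needs $\kappa_0>0$, but a sentence reconciling the two (or simply noting that the statement's ``$>1$'' is a harmless convention) would tighten the exposition. Aside from these, assertions (1) and (3) are handled correctly and match Benoist--Oh \S 14.
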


	\begin{thm}\label{theorem_B}
		For every open compact subgroup \( U_f \leq G_S \), there exist \(C_1= C_1({U_f})>1 \) and \(m_1=m_1(U_f) > 0 \) such that for every \( U_f \)-invariant smooth vectors \( x,y \in L_0^2(G(k) \bs G(\RA_k)) \) (the Hilbert space of $L^2$-functions which integrate to zero against the invariant measure) and for every $g \in G_S$,
		\[
		\left| \langle g x, y \rangle_{G(k)\bs G(\RA_k)} \right| \leq C_1 \widetilde{\xi}_{S}(g) \norm{D^{m_1} x}_{L^2}  \norm{D^{m_1} y}_{L^2},  
		\]
		where $\widetilde{\xi}_S:G(\RA_k)\to (0,1]$ is defined in \cite[Definition 3.20]{GorOhMau08} (with $R$ replaced by $S$), and all integrals here are with respect to the \( G(\RA_k) \)-invariant probability measure on \( G(k)\bs G(\RA_k) \).
	\end{thm}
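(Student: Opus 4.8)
The plan is to establish Theorem \ref{theorem_B} by running the proof of \cite[Theorem 3.22]{GorOhMau08} with their set of places ``$R$'' specialized to our $S$; what follows is really a description of which ingredients I would quote and where the congruence and $S$-adic features enter. The backbone is the quantitative adelic mixing scheme: decompose $L_0^2(G(k)\bs G(\RA_k))=\int^{\oplus}\pi\,\diff\mu(\pi)$ into irreducibles, estimate the matrix coefficient of a pure tensor place by place, convert local data into Sobolev norms, and integrate back over the spectral parameter. The whole thing hinges on a \emph{uniform} spectral input: since $G$ is connected, simply connected and almost $k$-simple, there is a constant $p=p(G)\geq 2$ such that every nontrivial $G(\RA_k)$-subrepresentation of $L_0^2$ is strongly $L^{p+\ep}$ (property $(\tau)$ for $k$-simple simply connected groups, together with the standard temperedness-up-to-tensor-power arguments, as assembled in \cite[\S3]{GorOhMau08}); the function $\widetilde{\xi}_S$ of \cite[Definition 3.20]{GorOhMau08} is built precisely so as to absorb the resulting exponent $1/p$. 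I would cite this rather than reprove it.

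Next I would install the local estimates. For $\pi=\bigotimes_v\pi_v$ as above and a pure tensor $x=\bigotimes_v x_v$, $y=\bigotimes_v y_v$, strong $L^{p+\ep}$-ness and the Cowling--Haagerup--Howe inequality give, at each archimedean $v\in S_\infty$, a bound $\normm{\langle g_v x_v,y_v\rangle_{\pi_v}}\leq \widetilde{\xi}_v(g_v)\,(\dim\langle K_v x_v\rangle\dim\langle K_v y_v\rangle)^{1/2}\norm{x_v}\norm{y_v}$, and I would absorb the $K_v$-span dimensions into $\norm{D^{m_1}(\cdot)}_{L^2}$ for $m_1$ large in terms of $G$, by elliptic regularity for the operator $D$ attached to a basis of $\bigoplus_{v\in S_\infty}\Lie(G(k_v))$. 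At each finite $v\in S_f$ the same inequality gives a bound by $\widetilde{\xi}_v(g_v)$ times a power of the index $[K_v:K_v\cap U_f]$, because $x_v,y_v$ are $U_f$-invariant; this is the only place where $C_1$ (and, if one is careless, $m_1$) acquires its dependence on $U_f$, and no archimedean derivative is needed at finite places. At the remaining places $v\notin S$ there is nothing to do: for $g\in G_S$ one has $g_v=e$, hence $\widetilde{\xi}_v(e)=1$ and that factor of the coefficient is just $\langle x_v,y_v\rangle$, so the open compact invariance of $x,y$ away from $S$, which is automatic by smoothness, costs nothing. Thus $\prod_v\widetilde{\xi}_v(g_v)$ collapses to $\widetilde{\xi}_S(g)$.

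Finally I would patch everything together: multiply the local inequalities, pass from pure tensors to general smooth $U_f$-invariant vectors by the usual density and continuity argument inside the spectral decomposition (using $\norm{D^{m_1}x}_{L^2}^2=\int\norm{D^{m_1}x_\pi}^2\,\diff\mu(\pi)$ and the uniformity of all constants in $\pi$), and collect terms. This yields $\normm{\langle gx,y\rangle_{G(k)\bs G(\RA_k)}}\leq C_1\,\widetilde{\xi}_S(g)\,\norm{D^{m_1}x}_{L^2}\norm{D^{m_1}y}_{L^2}$ with $C_1,m_1$ depending only on $U_f$ and $G$, as required.

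The step I expect to be the main obstacle is the very first one: the \emph{uniformity} of the decay, i.e.\ that a single pair $(p,m_1)$ governs the entire automorphic spectrum — continuous part and all congruence covers at once. This is the substantive input (property $(\tau)$, plus Oh-type uniform pointwise bounds for matrix coefficients), and it is exactly what \cite{GorOhMau08} supplies; everything downstream is the routine adelic bookkeeping. A lesser technical point is to verify, via the computation in the second paragraph, that the constants genuinely do not feel the level of $x,y$ outside $S$.
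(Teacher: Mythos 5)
Your proposal is essentially the paper's own treatment: the paper does not prove Theorem~\ref{theorem_B} but states only that it follows from ``the proof of'' \cite[Theorem 3.22]{GorOhMau08} (not the statement), referring to the proof of \cite[Theorem 10.2]{Benoist-Oh} for properties of $\widetilde{\xi}_S$, which is exactly the route you sketch. Your reconstruction --- spectral decomposition of $L_0^2(G(k)\bs G(\RA_k))$, property $(\tau)$ as the uniform strong-$L^{p+\ep}$ input that $\widetilde{\xi}_S$ is designed to absorb, the Cowling--Haagerup--Howe bound at each $v\in S$ with Sobolev absorption at archimedean places and $U_f$-index absorption at the finite places of $S$, and the observation that for $g\in G_S$ the places outside $S$ contribute trivially since $g_v=e$ --- is a correct and correctly structured account of that argument (your aside that $m_1$ need not depend on $U_f$ with care is a slight sharpening; the paper's statement permits, and does not require, such dependence).
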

	As is mentioned, this does not formally follow from \cite[Theorem 3.22]{GorOhMau08}, but rather from the proof of it. We refer to the proof of \cite[Theorem 10.2]{Benoist-Oh} for useful properties of $\widetilde{\xi}_S$.  Repeating this proof with Theorem \ref{theorem_B} one has
	\begin{thm}\label{theorem_C}
		There exists \( \delta_1 > 0 \) such that for every open compact subgroup \( U_f \subset G_{S_f} \), there exists \(C_2= C_2{(U_f)} > 1 \) such that for each ideal $\mathfrak{l}$ of \(  \calO_{S} \), every $U_f$-invariant
		$\psi_1, \psi_2 \in C_c^{\infty}(Y_S(\mathfrak{l}))$ and for every $g \in G_S$,
		\[
		\normm{ \langle g \psi_1,\psi_2 \rangle_{{Y_S(\mathfrak{l})}} - 
			\int_{Y_S(\mathfrak{l})}\psi_1 \, \mathrm{d} \rmm_{Y_S(\mathfrak{l})}^{\bmone} \int_{Y_S(\mathfrak{l})}\psi_2 \, \mathrm{d} \rmm_{Y_S(\mathfrak{l})}^{\bmone}
			} \leq C_2 \calS^{m_1}(\psi_1)\calS^{m_1}(\psi_2) \left(\prod_{v\in S}\mathrm{Ht}_v(z_0.g) \right)^{-\delta_1}, 
		\]
		where the implicit integrals in defining Sobolev norms  $\calS^{m_1}$ as well as in $\langle -,-\rangle$ are against the normalized probability measures.
		The sup-index $(-)^{\bmone}$ indicates the probability measure proportional to the given finite measure.
	\end{thm}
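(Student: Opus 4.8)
The plan is to deduce Theorem~\ref{theorem_C} from the $S$-adic mixing bound Theorem~\ref{theorem_B} by ``unfolding'' the congruence quotient $Y_S(\mathfrak{l})$ inside the adelic quotient $G(k)\bs G(\RA_k)$, following \cite[\S10]{Benoist-Oh}. Let $K_{\mathfrak{l}}\subset G(\RA_k^{S})$ be the open compact subgroup given at each finite place $v\notin S$ by the level-$\mathfrak{l}$ principal congruence subgroup of $G(\CO_v)$ (and at each infinite place $v\notin S$, which is then compact, by $G(k_v)$), chosen so that $G(k)\cap(G_S\times K_{\mathfrak{l}})=\Gamma(\mathfrak{l})$. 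Then $Y_S(\mathfrak{l})=\Gamma(\mathfrak{l})\bs G_S$ is $G_S$-equivariantly identified with a clopen $G_S$-invariant piece $\Omega_{\mathfrak{l}}$ of the quotient $\bigl(G(k)\bs G(\RA_k)\bigr)/K_{\mathfrak{l}}$, and extension by zero gives a $G_S$-equivariant embedding $\iota_{\mathfrak{l}}\colon C_c^{\infty}(Y_S(\mathfrak{l}))\hookrightarrow C^{\infty}\bigl(G(k)\bs G(\RA_k)\bigr)$ onto the $(U_f\times K_{\mathfrak{l}})$-invariant functions supported on $\Omega_{\mathfrak{l}}$, which multiplies the $L^2$-inner product and each archimedean Sobolev norm squared by the single scalar $c_{\mathfrak{l}}:=\mathbf{m}(\Omega_{\mathfrak{l}})$, $\mathbf{m}$ being the invariant probability measure. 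Since $G$ is semisimple and simply connected, strong approximation (valid because $S_1\neq\varnothing$) makes $G(\RA_k)$ act ergodically on $G(k)\bs G(\RA_k)$, so $L^2_0\bigl(G(k)\bs G(\RA_k)\bigr)$ is exactly the orthogonal complement of the constants.

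Next I would split off the mean. Write $\psi_i=\overline{\psi}_i+\psi_i^{0}$, where $\overline{\psi}_i=\int_{Y_S(\mathfrak{l})}\psi_i\,\mathrm{d}\rmm_{Y_S(\mathfrak{l})}^{\bmone}$ and $\psi_i^{0}$ has mean zero. By $G_S$-invariance of the measure the cross terms in $\langle g\psi_1,\psi_2\rangle_{Y_S(\mathfrak{l})}^{\bmone}$ vanish, so the left-hand side of Theorem~\ref{theorem_C} equals $|\langle g\psi_1^{0},\psi_2^{0}\rangle_{Y_S(\mathfrak{l})}^{\bmone}|=c_{\mathfrak{l}}^{-1}\,|\langle g\,\iota_{\mathfrak{l}}(\psi_1^{0}),\iota_{\mathfrak{l}}(\psi_2^{0})\rangle_{G(k)\bs G(\RA_k)}|$. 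Each $\iota_{\mathfrak{l}}(\psi_i^{0})$ has integral zero, hence lies in $L^2_0$, and is invariant under the \emph{fixed}, $\mathfrak{l}$-independent compact open subgroup $U_f\subset G_{S_f}$ (the extra invariance under $K_{\mathfrak{l}}$ away from $S$ being irrelevant here). Applying Theorem~\ref{theorem_B} with this $U_f$ produces $\mathfrak{l}$-independent constants $C_1=C_1(U_f)$, $m_1=m_1(U_f)$ with
\[
|\langle g\,\iota_{\mathfrak{l}}(\psi_1^{0}),\iota_{\mathfrak{l}}(\psi_2^{0})\rangle_{G(k)\bs G(\RA_k)}|\;\le\; C_1\,\widetilde{\xi}_{S}(g)\,\|D^{m_1}\iota_{\mathfrak{l}}(\psi_1^{0})\|_{L^2}\,\|D^{m_1}\iota_{\mathfrak{l}}(\psi_2^{0})\|_{L^2}.
\]
Since $D^{m_1}$ only differentiates in the archimedean directions common to the two quotients, $\|D^{m_1}\iota_{\mathfrak{l}}(\psi_i^{0})\|_{L^2}=c_{\mathfrak{l}}^{1/2}\calS^{m_1}(\psi_i^{0})\le c_{\mathfrak{l}}^{1/2}\calS^{m_1}(\psi_i)$; the factor $c_{\mathfrak{l}}$ then cancels against $c_{\mathfrak{l}}^{-1}$, leaving the bound $C_1\,\widetilde{\xi}_{S}(g)\,\calS^{m_1}(\psi_1)\calS^{m_1}(\psi_2)$, uniform in $\mathfrak{l}$.

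It then remains to replace the abstract factor $\widetilde{\xi}_{S}(g)$ by $\bigl(\prod_{v\in S}\mathrm{Ht}_v(z_0.g)\bigr)^{-\delta_1}$ for a universal $\delta_1>0$, which is exactly what the properties of $\widetilde{\xi}_{S}$ used in the proof of \cite[Theorem~10.2]{Benoist-Oh} (cf. \cite{GorOhMau08}) provide: on the one hand $\widetilde{\xi}_{S}(g)\ll\exp\bigl(-\delta'\sum_{v\in S}\|\mu_v(g_v)\|\bigr)$ in terms of the Cartan projections $\mu_v$; on the other hand $\mathrm{Ht}_v(z_0.g)\ll\|\iota(g_v)\|_v^{O(1)}\ll\exp\bigl(O(\|\mu_v(g_v)\|)\bigr)$, because the matrix entries of the fixed representation $\iota$ grow at most exponentially in the Cartan projection. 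Choosing $\delta_1$ small enough relative to $\delta'$ and these exponents then gives Theorem~\ref{theorem_C}.

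The step I expect to be the main obstacle is securing all of this \emph{uniformly in $\mathfrak{l}$}: one must make the identification $Y_S(\mathfrak{l})\cong\Omega_{\mathfrak{l}}$ and the measure normalisations (the covolume $|\rmm_{Y_S(\mathfrak{l})}|$, the mass $\mathbf{m}(\Omega_{\mathfrak{l}})$, and their interaction with \eqref{eq:normalisation}) precise enough that the scalar $c_{\mathfrak{l}}$ genuinely cancels, and one must check that the constants coming from Theorem~\ref{theorem_B} depend only on the fixed finite-place datum $U_f$, not on the congruence level --- this uniform-in-$\mathfrak{l}$ power saving being precisely what the later sieving arguments require.
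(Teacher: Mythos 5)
Your argument is correct and is essentially the proof the paper intends by its instruction to ``repeat the proof of [Benoist--Oh, Theorem~10.2]'' with Theorem~\ref{theorem_B}: unfold to the adelic quotient $G(k)\backslash G(\RA_k)$, split off the mean so the cross terms vanish, apply the $S$-adic decay of matrix coefficients to the mean-zero parts, and then convert $\widetilde{\xi}_S$ to a power of the height via Cartan-projection estimates. The uniformity-in-$\mathfrak{l}$ worry you flag at the end is not actually an obstacle: strong approximation (valid since $S_1\neq\varnothing$) forces $G(k)\cdot(G_S\times K_{\mathfrak{l}})=G(\RA_k)$, so $\Omega_{\mathfrak{l}}$ is all of $(G(k)\backslash G(\RA_k))/K_{\mathfrak{l}}$, $c_{\mathfrak{l}}=1$ after probability normalisation, and the constants from Theorem~\ref{theorem_B} depend only on the fixed $U_f\subset G_{S_f}$, never on the level $\mathfrak{l}$.
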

	
	For $\varepsilon>0$, define
	$$\mathcal{N}^{f}_{\ep} := \{ g \in G_S : \|\iota(g) - \text{id}\|_{v} \leq \varepsilon, \; \forall \,v \in \mathcal{S}_{f} \}.$$
	The following lemma is taken from \cite[Lemmas 12.3, 3.3]{Benoist-Oh}, which explains how \( H \) being symmetric helps. The construction of \( F_S \) is found in \cite[Lemma 3.3]{Benoist-Oh}.
	\begin{lem}\label{lemma_D}
		There exists a subset \( F_S \subset G_S \) such that
		\begin{enumerate}
			\item  $G_S = H_S \cdot F_S$;
			\item \( \exists\, \ell_1 \in \mathbb{Z}^+, \,  \varepsilon_1 >0 \) such that for all \( 0 < \ep, \ep' < \varepsilon_1\) and \(  g\in F_S \),
			\[
			H_S \cdot \calN^{\infty}_{\ep} \calN_{\ep'}^f \cdot g \subset H_S \cdot g \cdot  \calN^{\infty}_{l_1\ep} \calN_{l_1\ep'}^f.
			\]
		\end{enumerate}
	\end{lem}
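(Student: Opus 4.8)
\medskip
\noindent\textbf{Proof plan.}
This is the \emph{effective wavefront property} of the symmetric space $H_S\backslash G_S$, and my plan is to reconstruct it following \cite[Lemmas 3.3, 12.3]{Benoist-Oh}, checking that the argument there does not see the difference between $\BR$ and an $S$-adic field. I would begin from the involution $\sigma$ of $G$ with fixed locus $H$ and its eigenspace decomposition $\mathfrak g=\mathfrak h\oplus\mathfrak q$. For each $v\in S$ I would fix a maximal $k_v$-split torus $A_v$ with $\Lie(A_v)\subset\mathfrak q_v$ and write $\mathfrak g_v=\mathfrak g_{v,0}\oplus\bigoplus_{\alpha\in\Sigma_v}\mathfrak g_{v,\alpha}$ for the weight-space decomposition under $A_v$; since $\sigma=-1$ on $\Lie(A_v)$ one has $\sigma(\mathfrak g_{v,\alpha})=\mathfrak g_{v,-\alpha}$, so $\Sigma_v$ is symmetric and we may choose $\Sigma_v^+\subset\Sigma_v$ with $\Sigma_v=\Sigma_v^+\sqcup(-\Sigma_v^+)$. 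Put $\mathfrak n_v:=\bigoplus_{\alpha\in\Sigma_v^+}\mathfrak g_{v,\alpha}$ and $A_v^+:=\{a\in A_v:|\alpha(a)|_v\geq1\text{ for every }\alpha\in\Sigma_v^+\}$, and pick a suitable compact $K_v\subset G(k_v)$ (maximal compact, resp. compact open for $v\in S_f$). Invoking the polar (generalized Cartan) decomposition for symmetric spaces --- classical at archimedean places, and due to Benoist--Oh at the non-archimedean ones --- I would obtain a finite set $\{c_{v,i}\}\subset G(k_v)$ with $G(k_v)=\bigcup_i H(k_v)\,c_{v,i}A_v^+K_v$, and set $F_v:=\bigcup_i c_{v,i}A_v^+K_v$, $F_S:=\prod_{v\in S}F_v$. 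Then $G_S=H_SF_S$, which is (1).

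For (2) the statement factors over the places, so I would fix $v$ and $g\in F_v$, say $g=c\,a\,k$ with $c$ in the finite set, $a\in A_v^+$, $k\in K_v$. The one structural point to isolate is that $\mathfrak h_v$ contains $X+\sigma X$ for every $X\in\mathfrak g_{v,\alpha}$; together with $\mathfrak g_{v,0}$ and $\mathfrak n_v$ (and with $\sigma(\mathfrak g_{v,\alpha})=\mathfrak g_{v,-\alpha}$, whence $\mathfrak g_{v,-\alpha}\subset\mathfrak h_v+\mathfrak n_v$ for $\alpha\in\Sigma_v^+$) this already gives
\[
\mathfrak h_v+\bigl(\mathfrak g_{v,0}\oplus\mathfrak n_v\bigr)=\mathfrak g_v .
\]
Consequently $(Z,Y)\mapsto\exp(Z)\exp(Y)$ from $\mathfrak h_v\times(\mathfrak g_{v,0}\oplus\mathfrak n_v)$ to $G(k_v)$ is submersive at $0$ (at a non-archimedean $v$ one uses that $\exp$ is an analytic isomorphism near $0$, on a small compact open subgroup), so on a small enough neighbourhood of $e$ every element $n$ can be written $n=\exp(Z)\exp(Y)$ with $Z\in\mathfrak h_v$, $Y\in\mathfrak g_{v,0}\oplus\mathfrak n_v$ and $\|Z\|_v,\|Y\|_v\lesssim\|\iota(n)-\id\|_v$. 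Now $\exp(Z)\in H(k_v)$ is swallowed by $H_S$, while $\Ad(a^{-1})$ acts by $1$ on $\mathfrak g_{v,0}$ and by $\alpha(a)^{-1}$ --- of $v$-adic absolute value $\leq1$ for $a\in A_v^+$ --- on each $\mathfrak g_{v,\alpha}$ with $\alpha\in\Sigma_v^+$; hence $a^{-1}\exp(Y)a=\exp(\Ad(a^{-1})Y)$ is no larger than $\exp(Y)$ up to a constant depending only on the norm, and conjugating in addition by the compact $k$ and by the fixed $c$ costs a further bounded factor. This gives $ng\in H(k_v)\,g\,\calN'$ with $\calN'$ of radius $\leq l_{1,v}\cdot(\text{radius of }n)$; taking $l_1:=\max_{v\in S}l_{1,v}$ and $\varepsilon_1:=\min_{v\in S}$ of the local radii then yields (2).

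The steps I expect to be genuinely non-formal are confined to the non-archimedean places: (i) the polar decomposition $G(k_v)=\bigcup_i H(k_v)c_{v,i}A_v^+K_v$ with a \emph{uniformly finite} index set, which rests on the structure theory of $p$-adic symmetric spaces; and (ii) pushing the infinitesimal identity $\mathfrak h_v+(\mathfrak g_{v,0}\oplus\mathfrak n_v)=\mathfrak g_v$ through $\exp$ on a small compact open subgroup while tracking the totally disconnected topology (if convenient, replacing $A_v^+$ by a finitely generated sub-semigroup of cocharacters). By contrast, the actually \emph{effective} ingredient --- that $l_1$ may be chosen independently of how deep $a$ lies in $A_v^+$ --- needs no estimate at all: it is precisely the fact that $\Ad(A_v^+)$ never expands $\mathfrak g_{v,0}\oplus\mathfrak n_v$, which is exactly why $Y$ is arranged to live there.
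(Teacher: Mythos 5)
The paper does not actually prove Lemma~\ref{lemma_D}; it simply cites \cite[Lemmas 12.3, 3.3]{Benoist-Oh}, the second for the construction of $F_S$, so your reconstruction is following the same route the paper implicitly relies on. Your identification of the key structural points is correct and matches that source: the Lie-algebra identity $\mathfrak h_v+(\mathfrak g_{v,0}\oplus\mathfrak n_v)=\mathfrak g_v$ coming from $\sigma(\mathfrak g_{v,\alpha})=\mathfrak g_{v,-\alpha}$, the observation that $\Ad(A_v^+)$ is non-expanding on $\mathfrak g_{v,0}\oplus\mathfrak n_v$ (this is the whole reason $\ell_1$ is uniform in $g$), and the reduction of the $S$-adic statement to one place at a time with $\ell_1$ and $\varepsilon_1$ obtained by a max/min over $S$.

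There is, however, one concrete gap in how you place the finite correction set $\{c_{v,i}\}$. You set $F_v=\bigcup_i c_{v,i}A_v^+K_v$, so $g=cak$, and then assert that ``conjugating in addition by the compact $k$ and by the fixed $c$ costs a further bounded factor.'' But with $g=cak$ one has
$g^{-1}\exp(Y)g=k^{-1}a^{-1}\bigl(c^{-1}\exp(Y)c\bigr)ak$,
so the conjugation by $c^{-1}$ acts \emph{before} the contraction by $a^{-1}$, not after. While $\Ad(c^{-1})$ is bounded, it need not preserve the non-expanding subspace $\mathfrak g_{v,0}\oplus\mathfrak n_v$; once $\Ad(c^{-1})Y$ picks up a component in $\bigoplus_{\alpha\in\Sigma_v^+}\mathfrak g_{v,-\alpha}$, the subsequent $\Ad(a^{-1})$ expands that component without any bound as $a$ goes deep into $A_v^+$, and the uniformity in $g\in F_v$ --- which is the entire content of part (2) --- is lost. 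The cure is to put the finite correction set on the other side, $F_v=\bigcup_i A_v^+c_{v,i}K_v$ (equivalently, absorb the $c_{v,i}$ into an enlarged compact), so that $g=ack$ and $g^{-1}\exp(Y)g=(ck)^{-1}\exp(\Ad(a^{-1})Y)(ck)$: the contraction then happens first and the bounded conjugation by $ck$ does no harm. This is also the shape in which the $p$-adic polar decomposition for symmetric spaces is actually stated in \cite{Benoist-Oh}, with the finitely many translates sitting between the chamber and the compact rather than between $H$ and the chamber. Apart from this reordering your sketch is sound and reproduces the cited argument.
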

	Next, we explain the effective thickening trick, following \cite[Section 11]{Benoist-Oh}.
	\begin{itemize}
		\item Choose $\fraks$, an $\mathrm{Ad}(H)$-invariant $k$-subspace of $\frakg$ complementary to $\frakh$:
		$\frakg = \frakh \oplus \fraks$.
		And we let $W_{v}:= \exp(\fraks_{k_{v}})$, $W_S:= \prod_{v \in S} W_{v} \subset G_S$.
		\item Choose
		\( 0 < \ep_2 < \ep_1 \) such that
		\begin{enumerate}
			\item  $H_{\ep_2} \times W_{\ep_2} \to H_{\ep_2} \cdot W_{\ep_2} \to \Gamma \bs \Gamma H_{\ep_2}W_{\ep_2}$ are homeomorphisms where $H_{\ep_2}:= H_S \cap \calN^{\infty}_{\ep_2}\calN^{f}_{\ep_2}$ and $W_{\ep_2}:= W_S \cap \calN^{\infty}_{\ep_2}\calN^{f}_{\ep_2}$;
			\item $\calN^{f}_{\ep_2}$ is contained in an open compact subgroup of $G_{S_f}$.
		\end{enumerate}
		\item Fix a measure \( \rmm_W \) supported on \( W_{\ep_2} \) such that
		\[
		\rmm_{G_S} \vert_{H_{\ep_2}W_{\ep_2} } \cong \rmm_{H_S} \vert_{H_{\ep_2}} \otimes \rmm_W.
		\]
		\item For every open compact subgroup $U_f$ of $G_{S_f}$, we fix another open compact subgroup $U_f' \subset U_f$ such that $H_{\ep_2}W_{\ep_2}$ is preserved by $U_f'$.
	\end{itemize}
	
	The following is \cite[Lemma 11.8]{Benoist-Oh}.
	\begin{lem}\label{lemma_E}
		There exist \( C_3 > 1 \) and \( \kappa_2 :\mathbb{Z}^+ \to (1, +\infty) \) such that for every open compact subgroup \( U_f \subset G_{S_f} \) and  \( 0 < \ep < \ep_2 \), there are smooth functions \( \beta_{\ep}: H_S \to \mathbb{R}_{\geq 0} \) and  \(\rho_{\ep}: W_S \to \mathbb{R}_{\geq 0} \)  such that for all \( m \in \mathbb{Z}^+ \),
		\begin{itemize}
			\item[1.] \( \text{supp} \, \beta_{\varepsilon} \subset H_{\ep}, \; \beta_{\varepsilon} \vert_{H_{\ep^2}} \geq 1,\; \calS^m(\beta_{\ep}) \leq C_3 \ep^{-\kappa_2(m)} \),
			\item[2.] $ \text{supp} \, \rho_{\varepsilon} \subset W_{\ep}, \;  \int_{W_{\ep_2}} \rho_{\ep}\, \diff\rmm_{W} =1
			$,
		\end{itemize}
		and if \( \alpha_{\ep} \) denotes the smooth function supported on \( H_{\ep} W_{\ep} \) defined by \( \alpha_{\ep}{(hw)} := \beta_{\varepsilon}(h) \rho_{\ep}(w) \), then
		\begin{itemize}
			\item[3.]  $\alpha_{\ep} \text{ is } U'_f \text{-invariant}$ and $\calS^m(\alpha_{\ep}) \leq C_3  \varepsilon^{-\kappa_2(m)}$.
		\end{itemize}
	\end{lem}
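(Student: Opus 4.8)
The plan is to reproduce the bump-function construction of \cite[Lemma 11.8]{Benoist-Oh}: everything is built place by place from a single fixed smooth bump, so that the Sobolev bounds reduce to the effect of a dilation. Since $0<\ep<\ep_2<1$, one has product decompositions $H_S=H_{S_\infty}\times H_{S_f}$ and $W_S=W_{S_\infty}\times W_{S_f}$ under which each of $H_\ep,H_{\ep^2},W_\ep$ is a small ball in a real Lie group times a compact open neighbourhood of the identity (in fact an open compact subgroup of $H_{S_f}$ in the cases $H_\ep,H_{\ep^2}$, using the ultrametric property of $\|\cdot\|_v$). Accordingly I would build $\beta_\ep=\beta_\ep^\infty\otimes\beta_\ep^f$ and $\rho_\ep=\rho_\ep^\infty\otimes\rho_\ep^f$, and then $\alpha_\ep=\beta_\ep\otimes\rho_\ep$ through the chart $H_{\ep_2}\times W_{\ep_2}\iso H_{\ep_2}W_{\ep_2}$, which by the choice of $\ep_2$ is a homeomorphism and a $C^\infty$-diffeomorphism in the archimedean directions, and under which $\rmm_{G_S}$, hence $\rmm_W$, is a product of local measures.

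For the archimedean factors, fix once and for all a smooth $\phi$ on $\bigoplus_{v\in S_\infty}\frakh_{k_v}$ supported in the unit ball with $\phi\geq 1$ on the ball of radius $\tfrac12$, and a smooth probability density $\chi$ on $\bigoplus_{v\in S_\infty}\fraks_{k_v}$ supported in the unit ball. Comparing $\|\iota(\exp X)-\mathrm{id}\|_v$ with $\|X\|$ near the origin shows that $\exp$ takes the ball of radius $\asymp r$ to a set comparable with $\calN^\infty_r$; so I would let $\beta_\ep^\infty$ be $\phi$ rescaled by a factor $\asymp\ep$ and transported to $H_{S_\infty}$ via $\exp$, and $\rho_\ep^\infty$ the analogous rescaled dilate of $\chi$ on $W_{S_\infty}$. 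Then $\supp\beta_\ep^\infty\subset H_{S_\infty}\cap\calN^\infty_\ep$ and, after shrinking $\ep_2$ so that the scale $\ep^2$ sits well inside the scale $\ep$, $\beta_\ep^\infty\geq 1$ on $H_{S_\infty}\cap\calN^\infty_{\ep^2}$; similarly $\supp\rho_\ep^\infty\subset W_{S_\infty}\cap\calN^\infty_\ep$, the normalising constants of $\rho_\ep^\infty$ and of $\rho_\ep^f$ being fixed so that $\int_{W_{\ep_2}}\rho_\ep\,\diff\rmm_W=1$. For the non-archimedean factors I would take $\beta_\ep^f,\rho_\ep^f$ to be (normalised) indicator functions of $U_f'$-stable open compact subgroups of $H_{S_f},W_{S_f}$ lying inside the $\ep$-level neighbourhood and containing the $\ep^2$-level one.

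The Sobolev estimates are then mechanical: a left-invariant differential monomial of degree $\le m$, written in exponential coordinates on the fixed region $H_{\ep_2}W_{\ep_2}$, has coefficients with bounded derivatives, so each derivative on $\beta_\ep^\infty$ or $\rho_\ep^\infty$ costs a factor $O(\ep^{-1})$, giving $\calS^m(\beta_\ep^\infty),\calS^m(\rho_\ep^\infty)\ll_m\ep^{-(m+\dim G)}$, while the locally constant non-archimedean factors contribute only a harmless $m$-independent factor. Multiplying the two tensor factors and transporting through the chart for $\alpha_\ep$ introduces further $m$-dependent constants, but as $\ep<\ep_2<1$ these are absorbed into the exponent: take $C_3:=2$ and $\kappa_2(m):=m+\dim G+1+\log(1+C(m))/\log(\ep_2^{-1})$ with $C(m)$ the collected constant. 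This is precisely why $\kappa_2$ may grow with $m$ while $C_3$ does not, and the same $\kappa_2$ serves both $\beta_\ep$ and $\alpha_\ep$.

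The step I expect to cost the real work — the only one that is not purely formal — is securing the $U_f'$-invariance of $\alpha_\ep$ \emph{together with} its shrinking support, uniformly in $\ep$: since $\supp\beta_\ep\to\{e\}$ as $\ep\to 0$ while $U_f'$ is fixed, one cannot simply average over $U_f'$-cosets. One must instead use the precise compatibility — built into the choice of the $\Ad(H)$-invariant complement $\fraks$, the chart $H_{\ep_2}\times W_{\ep_2}\iso H_{\ep_2}W_{\ep_2}$, and the subgroup $U_f'$ — which makes this chart $U_f'$-equivariant, so that $U_f'$-invariance of $\alpha_\ep$ reduces to that of its locally constant non-archimedean factors. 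Since the statement is verbatim \cite[Lemma 11.8]{Benoist-Oh}, in the write-up I would cite loc.\ cit.\ for this bookkeeping and regard the construction and estimates above as the substance of the proof.
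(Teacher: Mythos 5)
The paper offers no proof of this lemma: it is stated as \cite[Lemma 11.8]{Benoist-Oh}, with the one added remark that the Sobolev bound $\calS^m(\beta_\ep)\leq C_3\ep^{-\kappa_2(m)}$ --- absent from the statement of loc.\ cit.\ --- is implicit in its proof. Your sketch reconstructs the Benoist--Oh construction (exponential coordinates, archimedean bumps dilated to scale $\ep$, indicator functions at the finite places, each derivative costing a factor $\ep^{-1}$, $m$-dependent constants absorbed into $\kappa_2(m)$ via $\ep<\ep_2<1$), and ends by citing the same lemma, so in substance your approach coincides with the paper's; what you add is the detail the paper leaves implicit.

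Where your account goes astray is the treatment of $U_f'$-invariance. You posit a real tension between $\supp\,\beta_\ep\to\{e\}$ and the fixed group $U_f'$, to be resolved by some $U_f'$-equivariance of the chart; but that tension arises only if the non-archimedean part of $\supp\,\beta_\ep$ shrinks with $\ep$, which is not what happens. The parameter $\ep$ governs only the archimedean factors: the finite-adelic components of $\beta_\ep$ and $\rho_\ep$ are $\ep$-\emph{independent} (normalised) indicator functions of the finite parts of $H_{\ep_2}$ and $W_{\ep_2}$, so the finite-adelic factor of $\alpha_\ep$ is the indicator of the finite part of $H_{\ep_2}W_{\ep_2}$, a set that $U_f'\subset G_{S_f}$ was \emph{chosen} to stabilise. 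Since $U_f'$ acts trivially at the infinite places, right $U_f'$-invariance of $\alpha_\ep$ is then automatic; no equivariance bookkeeping is needed. Conversely, if the finite-adelic support did shrink with $\ep$ as you propose (``$U_f'$-stable open compact subgroups ... lying inside the $\ep$-level neighbourhood''), then for $\ep$ small enough there would be no $U_f'$-stable subgroup at that level and $U_f'$-invariance would fail --- this is exactly why the construction freezes the non-archimedean factor at level $\ep_2$. Once that is corrected, your sketch is sound.
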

	Note that ``$\calS^m(\beta_{\ep}) \leq C_3 \ep^{-\kappa_2(m)}$'' is implicit from the proof of  \cite[Lemma 11.8]{Benoist-Oh}.
	The following is a variant of \cite[Proposition 11.7]{Benoist-Oh}.
	\begin{lem}\label{lemma_F}
		There exist \(C_4 > 1\) and \( \kappa_3 : \mathbb{Z}^+ \to (0,+\infty)\) such that for every open compact subgroup \( U_f \subset G_{S_f} \), \( 0 < \varepsilon < \varepsilon_2, m\in \mathbb{Z}^+ \) and  ideal $\mathfrak{l}$ of $\calO_{S}$, there exists
		\begin{itemize}
			\item[1.] \( \tau^{\mathfrak{l}}_{\ep}:  Y_S(\mathfrak{l}) \to [0,1], \, \text{supp}(\tau^{\mathfrak{l}}_{\ep}) \subset Z_{\ep}(\mathfrak{l}),\;
			\tau^{\mathfrak{l}}_{\ep}\vert_{Y_{4\ep}(\mathfrak{l})} =1 \)
		\end{itemize}
		such that if \( \varphi_{\varepsilon}^{\mathfrak{l}} \) is the function on \( Z_{\ep}(\mathfrak{l}) \) defined by
		\[
		\varphi_{\varepsilon}^{\mathfrak{l}} (x) := 
		\frac{ \normm{ \rmm_{Y_S(\mathfrak{l})}}
		}{  \normm{  \rmm_{Z_S(\mathfrak{l})} }
		} 
		\sum_{ (y,w) \in Z_{\ep}(\mathfrak{l}) \times W_{\ep},\; x=yw
		}
		\tau^{\mathfrak{l}}_{\ep}(y) \rho_{\ep}(w),
		\]
		then
		\begin{itemize}
			\item[2.]  $ \tau_{\varepsilon}^{\mathfrak{l}} \text{ is } U'_f \text{-invariant and} \;\; \calS^m(\varphi_{\varepsilon}^{\mathfrak{l}}) \leq C_{4}        \ep^{-\kappa_3(m)} \left| \rmm_{Y_S(\mathfrak{l})}\right| \normm{\rmm_{Z_{S}(\mathfrak{l})}}$.
		\end{itemize}
	\end{lem}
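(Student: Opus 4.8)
The plan is to reproduce, in the present $S$-arithmetic setting and uniformly in the congruence level $\mathfrak{l}$, the construction in the proof of \cite[Proposition 11.7]{Benoist-Oh}. The geometric input is the embedded tubular neighbourhood of $Z_S(\mathfrak{l})$ inside $Y_S(\mathfrak{l})$: for $0<\varepsilon<\varepsilon_2$, by the choice of $\varepsilon_2$ and the symmetry of $H$ (cf. \cite{Benoist-Oh}), the multiplication map $\bigl((\Gamma(\mathfrak{l})\cap H_S)\backslash H_S\bigr)\times W_{\varepsilon_2}\to Z_{\varepsilon_2}(\mathfrak{l})$, $(y,w)\mapsto yw$, is a homeomorphism, and under it $\rmm_{Y_S(\mathfrak{l})}$ restricted to the tube splits as $\rmm_{Z_S(\mathfrak{l})}\otimes\rmm_W$ (this is the defining property of $\rmm_W$). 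The source of uniformity in $\mathfrak{l}$ is that $Z_S(\mathfrak{l})\to Z_S$ and $Y_S(\mathfrak{l})\to Y_S$ are finite covers, hence local isomorphisms, so $Z_{\varepsilon_2}(\mathfrak{l})$ has exactly the local geometry of $Z_{\varepsilon_2}$; only the degree $\asymp\normm{\rmm_{Z_S(\mathfrak{l})}}$ and the total volume $\normm{\rmm_{Y_S(\mathfrak{l})}}$ feel the level.

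First I would construct $\tau^{\mathfrak{l}}_\varepsilon$ as a transverse cutoff. Fix, for each $0<\varepsilon<\varepsilon_2$, a $[0,1]$-valued function $\chi_\varepsilon$ on $W_S$ that is $U'_f$-invariant, supported in $W_\varepsilon$, equal to $1$ on a concentric sub-neighbourhood of $W_\varepsilon$, smooth at the archimedean places and locally constant at the finite ones, with $\calS^m(\chi_\varepsilon)\ll\varepsilon^{-\kappa_3(m)}$ for a suitable function $\kappa_3\colon\mathbb{Z}^+\to(0,+\infty)$; such $\chi_\varepsilon$ exist by mollifying a box at the infinite places and using that $\calN^f_{\varepsilon_2}$ sits inside an open compact subgroup preserved by $U'_f$ at the finite places. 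Transporting $\chi_\varepsilon$ through the tubular coordinates, set $\tau^{\mathfrak{l}}_\varepsilon(yw):=\chi_\varepsilon(w)$ for $y\in Z_S(\mathfrak{l})$, $w\in W_{\varepsilon_2}$, and extend it by $0$ off $Z_{\varepsilon_2}(\mathfrak{l})$, choosing the concentric sub-neighbourhood so that $\tau^{\mathfrak{l}}_\varepsilon\vert_{Y_{4\varepsilon}(\mathfrak{l})}=1$. The homeomorphism makes $\tau^{\mathfrak{l}}_\varepsilon$ a well-defined $U'_f$-invariant function on $Y_S(\mathfrak{l})$ supported in $Z_\varepsilon(\mathfrak{l})$; and since it depends only on the $W$-variable in these coordinates, standard estimates give $\calS^m(\tau^{\mathfrak{l}}_\varepsilon)\ll\varepsilon^{-\kappa_3(m)}\bigl(\normm{\rmm_{Z_S(\mathfrak{l})}}/\normm{\rmm_{Y_S(\mathfrak{l})}}\bigr)^{1/2}$, the square-root factor coming from integrating out the $Z_S(\mathfrak{l})$-direction against the probability measure on $Y_S(\mathfrak{l})$, with all implied constants $\mathfrak{l}$-independent by the local-isomorphism remark.

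Finally, define $\varphi^{\mathfrak{l}}_\varepsilon$ by the displayed formula of the lemma, with this $\tau^{\mathfrak{l}}_\varepsilon$ and the mollifier $\rho_\varepsilon$ of Lemma \ref{lemma_E}. In the tubular coordinates this exhibits $\varphi^{\mathfrak{l}}_\varepsilon$ as $\tfrac{\normm{\rmm_{Y_S(\mathfrak{l})}}}{\normm{\rmm_{Z_S(\mathfrak{l})}}}$ times the $\rho_\varepsilon$-average of $\tau^{\mathfrak{l}}_\varepsilon$ in the $W$-direction, whence it is well-defined and supported in a slightly enlarged tube, $U'_f$-invariant because $\tau^{\mathfrak{l}}_\varepsilon$ and $\rho_\varepsilon$ are and $U'_f$ preserves $H_{\varepsilon_2}W_{\varepsilon_2}$; and by Young's inequality for convolution (using $\int\rho_\varepsilon\,\diff\rmm_W=1$; equivalently the Leibniz rule)
\[
\calS^m(\varphi^{\mathfrak{l}}_\varepsilon)\;\ll\;\frac{\normm{\rmm_{Y_S(\mathfrak{l})}}}{\normm{\rmm_{Z_S(\mathfrak{l})}}}\,\calS^m(\tau^{\mathfrak{l}}_\varepsilon)\;\ll\;\varepsilon^{-\kappa_3(m)}\Bigl(\frac{\normm{\rmm_{Y_S(\mathfrak{l})}}}{\normm{\rmm_{Z_S(\mathfrak{l})}}}\Bigr)^{1/2}\;\le\;\varepsilon^{-\kappa_3(m)}\,\normm{\rmm_{Y_S(\mathfrak{l})}}\,\normm{\rmm_{Z_S(\mathfrak{l})}},
\]
the last (crude) step using $\normm{\rmm_{Y_S(\mathfrak{l})}},\normm{\rmm_{Z_S(\mathfrak{l})}}\ge 1$; this is the assertion. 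I expect the only genuine difficulty to be the bookkeeping that keeps $C_4$ and $\kappa_3$ independent of $\mathfrak{l}$ — exactly the local-isomorphism observation — together with producing $\chi_\varepsilon$ simultaneously $C^\infty$ at the infinite places and $U'_f$-invariant at the finite ones with the prescribed support, which is precisely the kind of object that Lemma \ref{lemma_E} and the thickening setup supply; the rest is the verbatim adaptation of \cite[Proposition 11.7]{Benoist-Oh}.
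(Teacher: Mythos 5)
The paper does not give a proof of Lemma~\ref{lemma_F}: it is stated as ``a variant of [Benoist--Oh, Proposition 11.7]'' and the reader is referred to that argument. So there is no paper proof to compare against verbatim, but your reconstruction has a genuine flaw at its core.

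Your construction of $\tau^{\mathfrak{l}}_\varepsilon$ is wrong, and incompatibly so with item~1 of the lemma. You build $\tau^{\mathfrak{l}}_\varepsilon$ as a function of the transverse ($W$-) coordinate only, $\tau^{\mathfrak{l}}_\varepsilon(yw):=\chi_\varepsilon(w)$ with $\chi_\varepsilon$ supported in $W_\varepsilon$. Such a function is supported on a full-dimensional tube $Z_S(\mathfrak{l})\cdot W_\varepsilon$ around $Z_S(\mathfrak{l})$, whereas the lemma requires $\operatorname{supp}(\tau^{\mathfrak{l}}_\varepsilon)\subset Z_\varepsilon(\mathfrak{l})$, which is a \emph{subset} of the lower-dimensional $Z_S(\mathfrak{l})$, not a neighbourhood of it. Moreover, on $Z_S(\mathfrak{l})$ itself your $\tau^{\mathfrak{l}}_\varepsilon$ equals $\chi_\varepsilon(e)$, i.e.\ it is constant there, while the whole point of $\tau^{\mathfrak{l}}_\varepsilon$ in Benoist--Oh is that it is \emph{not} constant along $Z_S(\mathfrak{l})$: it is a cutoff in the $H_S$-direction that tapers from $1$ on the ``$4\varepsilon$-injective'' part $Z_{4\varepsilon}(\mathfrak{l})$ to $0$ outside the ``$\varepsilon$-injective'' part $Z_\varepsilon(\mathfrak{l})$, so as to suppress the contribution of points near the thin/cuspidal locus of $Z_S(\mathfrak{l})$ where the orbit map fails to be injective at scale $\varepsilon$. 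The transverse thickening in the $W$-direction is the job of $\rho_\varepsilon$ in the displayed formula for $\varphi^{\mathfrak{l}}_\varepsilon$, not of $\tau^{\mathfrak{l}}_\varepsilon$; you have in effect built $\rho_\varepsilon$ a second time and have nothing doing the job of $\tau^{\mathfrak{l}}_\varepsilon$.

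This misreading propagates: the claim that you can ``choose the concentric sub-neighbourhood so that $\tau^{\mathfrak{l}}_\varepsilon|_{Y_{4\varepsilon}(\mathfrak{l})}=1$'' is not meaningful for your $\tau^{\mathfrak{l}}_\varepsilon$ (the set $Y_{4\varepsilon}(\mathfrak{l})$ is determined by the injectivity radius, not by distance to $Z_S(\mathfrak{l})$), and the Sobolev-norm step $\calS^m(\tau^{\mathfrak{l}}_\varepsilon)\ll\varepsilon^{-\kappa_3(m)}\bigl(\normm{\rmm_{Z_S(\mathfrak{l})}}/\normm{\rmm_{Y_S(\mathfrak{l})}}\bigr)^{1/2}$ is attributed to ``integrating out the $Z_S(\mathfrak{l})$-direction'' for a function that you have declared constant in that direction, which does not parse. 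The correct route (as in \cite[Proposition 11.7]{Benoist-Oh}) is: produce $\tau^{\mathfrak{l}}_\varepsilon$ by smoothing the indicator of $Z_{2\varepsilon}(\mathfrak{l})$ inside $Z_S(\mathfrak{l})$ by convolving with the $H_S$-bump $\beta_\varepsilon$ of Lemma~\ref{lemma_E} (whose $U'_f$-invariance and Sobolev control are exactly what is needed), check the support and ``$\equiv 1$ on $Z_{4\varepsilon}(\mathfrak{l})$'' properties from the definition of $Y_\varepsilon(\mathfrak{l})$, and only then estimate $\calS^m(\varphi^{\mathfrak{l}}_\varepsilon)$ from the product structure $\varphi^{\mathfrak{l}}_\varepsilon(yw)=\tfrac{\normm{\rmm_{Y_S(\mathfrak{l})}}}{\normm{\rmm_{Z_S(\mathfrak{l})}}}\tau^{\mathfrak{l}}_\varepsilon(y)\rho_\varepsilon(w)$ using $\rmm_{G_S}|_{H_{\varepsilon_2}W_{\varepsilon_2}}\cong\rmm_{H_S}|_{H_{\varepsilon_2}}\otimes\rmm_W$; the uniformity in $\mathfrak{l}$ indeed comes from the covering maps $Z_S(\mathfrak{l})\to Z_S$, $Y_S(\mathfrak{l})\to Y_S$ being local isomorphisms, which is the one observation you get right.
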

	
	In the lemma above, we have adopted the notation (for \( 0 < \varepsilon < \varepsilon_2 \))
	\begin{equation*}
		\begin{aligned}
			Y_{\ep}(\mathfrak{l}) := \left\{ 
			x \in Y_S(\mathfrak{l}) \midd g \mapsto yg \text{ is injective on } H_{\ep}W_{\ep} \right\},\;
			Z_{\ep}(\mathfrak{l}):= Z_S(\mathfrak{l}) \cap Y_{\ep}(\mathfrak{l}).
		\end{aligned}
	\end{equation*}
	Let $\pi^Y_{\mathfrak{l}}$ (resp. $\pi^Z_{\mathfrak{l}}$) denote the natural quotient map from $Y_S(\mathfrak{l}) $ to $Y_S$ (resp. $Z_S(\mathfrak{l}) $ to $Z_S$).
	Here is a slight variant of \cite[Lemma 11.6]{Benoist-Oh}.
	\begin{lem}\label{lemma_G}
		There exist $0<\ep_3<\ep_2$ and $C_5, \kappa_4 > 1$ such that for each $0<\ep<\ep_3$ and ideal $\mathfrak{l}$ of $\calO_{S}$,
		\[
		\rmm^{\bmone}_{Z_S(\mathfrak{l})}
		\left({Z_S(\mathfrak{l}) \setminus Z_{\ep}(\mathfrak{l})} \right) \leq C_5 \varepsilon^{\kappa_4}.
		\]
	\end{lem}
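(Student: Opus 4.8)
The plan is to eliminate the ideal $\mathfrak{l}$ by a covering argument, so that the statement reduces to the case $\mathfrak{l}=\calO_{S}$ (i.e. to $\Gamma$ itself), which is exactly \cite[Lemma 11.6]{Benoist-Oh}.

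First I would set $Z_\ep:=Z_\ep(\calO_{S})$ and use the natural covering $\pi^Z_{\mathfrak{l}}\colon Z_S(\mathfrak{l})\to Z_S$. Every $\Gamma(\mathfrak{l})$-coset in $\Gamma(\mathfrak{l})H_S$ meets $H_S$, so each $x\in Z_S(\mathfrak{l})$ has a representative $y\in H_S$, with $\pi^Z_{\mathfrak{l}}(x)=\Gamma y\in Z_S$. If $\pi^Z_{\mathfrak{l}}(x)\in Z_\ep$, that is, $g\mapsto yg$ is injective on $H_\ep W_\ep$ as a map into $Y_S$, then it is injective into the covering $Y_S(\mathfrak{l})$ as well: an equality $yg_1=yg_2$ there forces $yg_1g_2^{-1}y^{-1}\in\Gamma(\mathfrak{l})\subseteq\Gamma$, hence $yg_1=yg_2$ in $Y_S$, hence $g_1=g_2$. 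Thus $x\in Z_\ep(\mathfrak{l})$, i.e.
\[
Z_S(\mathfrak{l})\setminus Z_\ep(\mathfrak{l})\ \subseteq\ (\pi^Z_{\mathfrak{l}})^{-1}\bigl(Z_S\setminus Z_\ep\bigr).
\]
Since $\rmm_{Z_S(\mathfrak{l})}$ and $\rmm_{Z_S}$ are both locally equal to $\rmm_{H_S}$, the finite covering $\pi^Z_{\mathfrak{l}}$ scales total mass by its degree $\normm{\rmm_{Z_S(\mathfrak{l})}}$, hence carries the normalised probability measure $\rmm^{\bmone}_{Z_S(\mathfrak{l})}$ onto $\rmm^{\bmone}_{Z_S}$. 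Applying this to the inclusion above gives
\[
\rmm^{\bmone}_{Z_S(\mathfrak{l})}\bigl(Z_S(\mathfrak{l})\setminus Z_\ep(\mathfrak{l})\bigr)\ \leq\ \rmm^{\bmone}_{Z_S}\bigl(Z_S\setminus Z_\ep\bigr),
\]
so it remains to bound the right-hand side, and the resulting constants are then automatically uniform in $\mathfrak{l}$.

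The bound $\rmm^{\bmone}_{Z_S}(Z_S\setminus Z_\ep)=O(\ep^{\kappa_4})$ with $\kappa_4>1$, for $Z_S=(\Gamma\cap H_S)\bs H_S$, is \cite[Lemma 11.6]{Benoist-Oh}: its hypotheses — $H$ symmetric, the $\Ad(H)$-invariant complement $\fraks$ of $\frakh$, and the neighbourhoods $\calN^{\infty}_{\ep},\calN^{f}_{\ep}$ — are precisely those of Setting \ref{setting}, so the constants $C_5,\kappa_4,\ep_3$ are inherited. For orientation I recall the structure of that proof. Unwinding definitions, $(\Gamma\cap H_S)y\in Z_S\setminus Z_\ep$ exactly when $y^{-1}\gamma y\in\Omega_\ep:=(H_\ep W_\ep)(H_\ep W_\ep)^{-1}$ for some $\gamma\in\Gamma\setminus\{e\}$, and the splitting $\frakg=\frakh\oplus\fraks$ (so that $(h,w)\mapsto hw$ is a local diffeomorphism at $e$) shows $\Omega_\ep$ is contained in an ``$\ep$-box'' $\calN^{\infty}_{c_0\ep}\calN^{f}_{c_0\ep}$ about $e$. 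One then splits according to whether $\gamma\in H_S$. The contribution of $\gamma\in(\Gamma\cap H_S)\setminus\{e\}$ is, up to a fixed rescaling, the $\ep$-thin cuspidal part of the finite-volume quotient $Z_S$; reduction theory for the $S$-arithmetic group $\Gamma\cap H_S\subset H_S$ — available because the identity component of $H$ has no nontrivial $k$-characters — bounds its measure by a power of $\ep$. The contribution of $\gamma\notin H_S$ is treated on an $\ep$-dependent compact core of $Z_S$ (whose complement is already absorbed into the cuspidal estimate): there only finitely many cosets $(\Gamma\cap H_S)\gamma$ can occur, and for each the relevant locus lies in an $O(\ep)$-tube around the proper real-analytic set $\{y:y^{-1}\gamma y\in H_S\}$, transverse to the $\fraks$-directions, hence of measure $O(\ep^{\dim\fraks})$ with $\dim\fraks=\dim G-\dim H\geq 2$. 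Combining the two contributions yields the exponent $\kappa_4>1$.

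The substantive content is therefore entirely in the base case \cite[Lemma 11.6]{Benoist-Oh}, and the only verification needed is that its proof carries over word for word to the $S$-adic setting of Setting \ref{setting}, which it does; the covering reduction in the first step is elementary and is exactly what makes the estimate uniform in $\mathfrak{l}$. If instead one insists on reproving the base case, the genuine obstacle is the cuspidal estimate — bounding the volume of the $\ep$-thin part of $(\Gamma\cap H_S)\bs H_S$ by an explicit power of $\ep$ through reduction theory for the symmetric $S$-arithmetic subgroup — together with keeping the ``finitely many $\gamma$'' count and the branch-tube bounds uniform down to the scale $\ep<\ep_3$, on which the compact core of $Z_S$ grows without bound.
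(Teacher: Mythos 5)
Your argument is the same as the paper's: reduce to the case $\mathfrak{l}=\calO_S$ (which is \cite[Lemma 11.6]{Benoist-Oh}) via the covering $\pi^Z_{\mathfrak{l}}$, using the inclusion $Z_S(\mathfrak{l})\setminus Z_\ep(\mathfrak{l})\subset(\pi^Z_{\mathfrak{l}})^{-1}(Z_S\setminus Z_\ep)$ and the fact that $(\pi^Z_{\mathfrak{l}})_*\rmm^{\bmone}_{Z_S(\mathfrak{l})}=\rmm^{\bmone}_{Z_S}$. The only differences are cosmetic: you spell out the inclusion (which the paper labels ``by definition'') and append an orientation sketch of the base case, both of which are fine but not needed.
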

	\begin{proof}
		When $L=(1)$, \cite[Lemma 11.6]{Benoist-Oh} implies $\rmm^{\bmone}_{Z_S}( Z_S \setminus Z_{\ep}) \leq C_5 \ep^{\kappa_4}$ for some $C_5,\kappa_4>1$, $0<\ep_3<\ep_2$ and all $0<\ep<\ep_3$.
		Since $\pi^Z_{\mathfrak{l}}( Z_S(\mathfrak{l})\setminus Z_{\ep}(\mathfrak{l}) ) \subset Z_S \setminus Z_{\ep}$ by definition and $( \pi^Z_{\mathfrak{l}})_* \rmm^{\bmone}_{ Z_S(\mathfrak{l})} = \rmm^{\bmone}_{Z_S}$. We have
		\begin{equation*}
			\rmm^{\bmone}_{Z_S(\mathfrak{l})}\left({Z_S(\mathfrak{l}) \setminus Z_{\ep}(\mathfrak{l})} \right)
			\leq 
			\rmm^{\bmone}_{Z_S(\mathfrak{l})}\left(
			(\pi^Z_{\mathfrak{l}})^{-1}
			({Z_S \setminus Z_{\ep}}) \right) =\rmm^{\bmone}_{Z_S}(Z_S \setminus Z_{\ep}) \leq C_5 \ep^{\kappa_4}. \qedhere
		\end{equation*}
	\end{proof}
	
	Here is the analogue of \cite[Theorem 12.5]{Benoist-Oh}.
	\begin{thm}\label{theorem_H}
		There exist \( m_2 \in \mathbb{Z}^+, \, \kappa_5 > 1\) and \( \delta_2\in(0,1) \) such that for every open compact subgroup \( U_f \leq G_{S_f} \) and compact subset \( \scrC \subset Y_S \), there exists \( C_6 = C_6(U_f, \scrC) \) such that for every \( \psi \in C_c^{\infty}(Y_S(\mathfrak{l})) \) with \( \pi^Y_{\mathfrak{l}}\vert_{\mathrm{supp}(\psi)} \) injecting into \( \scrC \), the following holds  for all $g \in G_S$,
		\[
		\normm{
			\int_{Z_S(\mathfrak{l})} \psi(yg) \,\mathrm{d}\rmm^{\bmone}_{Z_S(\mathfrak{l})}(y) -
			\int_{Y_S(\mathfrak{l})} \psi(x) \,\mathrm{d}\rmm^{\bmone}_{Y_S(\mathfrak{l})}(x)
		}
		\leq 
		C_6 \calS^{m_2}(\psi) \left(\prod_{v\in S}\mathrm{Ht}_v(z_0.g) \right)^{- \delta_2} \mathbf{N}(\mathfrak{l})^{\kappa_5}.
		\]
	\end{thm}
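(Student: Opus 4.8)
The plan is to carry out the thickening argument of \cite[\S\S11--12]{Benoist-Oh}, feeding in the effective mixing estimate Theorem~\ref{theorem_C} in place of Benoist--Oh's qualitative input and propagating the dependence on $\mathbf{N}(\mathfrak{l})$ through every estimate; this last bookkeeping is the genuinely new ingredient, since \cite{Benoist-Oh} imposes no congruence conditions. Fix $U_f$ and the compact set $\scrC\subset Y_S$; all implied constants below are allowed to depend on these, and we may assume $\prod_{v\in S}\mathrm{Ht}_v(z_0.g)$ exceeds any prescribed constant, the asserted inequality being trivial in the complementary range after enlarging $C_6$. First I would reduce to $g\in F_S$: by Lemma~\ref{lemma_D}(1) write $g=hf$ with $h\in H_S$ and $f\in F_S$. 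Since $\rmm_{Z_S(\mathfrak{l})}$ is $H_S$-invariant, the substitution $y\mapsto yh^{-1}$ shows that $\int_{Z_S(\mathfrak{l})}\psi(yg)\,\diff\rmm^{\bmone}_{Z_S(\mathfrak{l})}(y)$ is unchanged if $g$ is replaced by $f$; and $z_0.g=z_0.f$ because $z_0$ is $H$-fixed, so the right-hand side of the asserted inequality is unchanged as well.

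Next comes the thickening step, which converts the orbit integral into a matrix coefficient on $Y_S(\mathfrak{l})$. Fix a parameter $\varepsilon\in(0,\varepsilon_3/4)$, to be optimised at the end. Unfolding the definition of the $U'_f$-invariant function $\varphi^{\mathfrak{l}}_{\varepsilon}$ of Lemma~\ref{lemma_F} against the local product structure $\rmm_{G_S}\vert_{H_{\varepsilon_2}W_{\varepsilon_2}}\cong\rmm_{H_S}\vert_{H_{\varepsilon_2}}\otimes\rmm_W$ --- the normalising factor $\normm{\rmm_{Y_S(\mathfrak{l})}}/\normm{\rmm_{Z_S(\mathfrak{l})}}$ in $\varphi^{\mathfrak{l}}_{\varepsilon}$ being chosen exactly so that the following holds against the probability measures --- one gets
\[
\int_{Y_S(\mathfrak{l})}\varphi^{\mathfrak{l}}_{\varepsilon}(x)\,\psi(xg)\,\diff\rmm^{\bmone}_{Y_S(\mathfrak{l})}(x)=\int_{Z_S(\mathfrak{l})}\tau^{\mathfrak{l}}_{\varepsilon}(y)\Big(\int_{W_{\varepsilon}}\rho_{\varepsilon}(w)\,\psi(ywg)\,\diff\rmm_W(w)\Big)\diff\rmm^{\bmone}_{Z_S(\mathfrak{l})}(y).
\]
One must now compare the right-hand side with $\int_{Z_S(\mathfrak{l})}\psi(yg)\,\diff\rmm^{\bmone}_{Z_S(\mathfrak{l})}(y)$. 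Naively this fails: $ywg=yg\cdot(g^{-1}wg)$, and $g^{-1}wg$ need not be small, so one cannot move the small neighbourhood past $g$. The remedy --- and the precise place where $H$ being \emph{symmetric} is used --- is the wavefront property Lemma~\ref{lemma_D}(2): for $g\in F_S$ it yields $Z_S(\mathfrak{l})\,W_{\varepsilon}\,g\subset Z_S(\mathfrak{l})\,g\cdot\calN^{\infty}_{\ell_1\varepsilon}\calN^{f}_{\ell_1\varepsilon}$, i.e. the thickened tube $Z_S(\mathfrak{l})W_{\varepsilon}g$ lies inside a genuinely thin, $O(\varepsilon)$-sized neighbourhood of the translated orbit $Z_S(\mathfrak{l})g$, even though intrinsically it is the $g$-conjugated tube. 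Inserting this into the sandwich estimate of \cite[\S11]{Benoist-Oh} --- approximating $\psi$ on the tube from above and below by its trace on $Z_S(\mathfrak{l})g$, using Lemma~\ref{lemma_G} to discard the non-injective part $Z_S(\mathfrak{l})\setminus Z_{4\varepsilon}(\mathfrak{l})$, and bounding $\|\psi\|_{\infty}\ll\calS^{m}(\psi)$ by Sobolev embedding on $\scrC$ --- produces a constant $c_0\in(0,1]$ with
\[
\normm{\int_{Z_S(\mathfrak{l})}\psi(yg)\,\diff\rmm^{\bmone}_{Z_S(\mathfrak{l})}-\int_{Y_S(\mathfrak{l})}\varphi^{\mathfrak{l}}_{\varepsilon}(x)\,\psi(xg)\,\diff\rmm^{\bmone}_{Y_S(\mathfrak{l})}(x)}\ll\varepsilon^{c_0}\,\calS^{m}(\psi).
\]

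Then I would apply effective mixing. The identity above exhibits the right-hand integral as the matrix coefficient $\langle g\psi,\varphi^{\mathfrak{l}}_{\varepsilon}\rangle_{Y_S(\mathfrak{l})}$ of the $U'_f$-invariant pair $\psi,\varphi^{\mathfrak{l}}_{\varepsilon}$, so Theorem~\ref{theorem_C}, together with $\int_{Y_S(\mathfrak{l})}\varphi^{\mathfrak{l}}_{\varepsilon}\,\diff\rmm^{\bmone}_{Y_S(\mathfrak{l})}=\int_{Z_S(\mathfrak{l})}\tau^{\mathfrak{l}}_{\varepsilon}\,\diff\rmm^{\bmone}_{Z_S(\mathfrak{l})}=1+O(\varepsilon^{\kappa_4})$ (Lemma~\ref{lemma_G} again), gives
\[
\normm{\int_{Y_S(\mathfrak{l})}\varphi^{\mathfrak{l}}_{\varepsilon}(x)\,\psi(xg)\,\diff\rmm^{\bmone}_{Y_S(\mathfrak{l})}(x)-\int_{Y_S(\mathfrak{l})}\psi\,\diff\rmm^{\bmone}_{Y_S(\mathfrak{l})}}\ll\calS^{m_1}(\varphi^{\mathfrak{l}}_{\varepsilon})\,\calS^{m_1}(\psi)\,\Big(\prod_{v\in S}\mathrm{Ht}_v(z_0.g)\Big)^{-\delta_1}+\varepsilon^{\kappa_4}\|\psi\|_{\infty}.
\]
By Lemma~\ref{lemma_F}, $\calS^{m_1}(\varphi^{\mathfrak{l}}_{\varepsilon})\leq C_4\,\varepsilon^{-\kappa_3(m_1)}\normm{\rmm_{Y_S(\mathfrak{l})}}\normm{\rmm_{Z_S(\mathfrak{l})}}$, and $\normm{\rmm_{Y_S(\mathfrak{l})}}=\normm{\Gamma(\mathfrak{l})\bs\Gamma}$ and $\normm{\rmm_{Z_S(\mathfrak{l})}}=\normm{\Gamma(\mathfrak{l})\cap H_S\bs\Gamma\cap H_S}$ both embed into $\mathrm{GL}_n(\calO_S/\mathfrak{l}\calO_S)$ and so are $\leq\mathbf{N}(\mathfrak{l})^{n^2}$; hence $\calS^{m_1}(\varphi^{\mathfrak{l}}_{\varepsilon})\ll\varepsilon^{-\kappa_3(m_1)}\mathbf{N}(\mathfrak{l})^{2n^2}$. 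Combining the last two displays with the thickening estimate, for $g\in F_S$,
\[
\normm{\int_{Z_S(\mathfrak{l})}\psi(yg)\,\diff\rmm^{\bmone}_{Z_S(\mathfrak{l})}-\int_{Y_S(\mathfrak{l})}\psi\,\diff\rmm^{\bmone}_{Y_S(\mathfrak{l})}}\ll\calS^{m_2}(\psi)\Big(\varepsilon^{c_0}+\varepsilon^{-\kappa_3(m_1)}\mathbf{N}(\mathfrak{l})^{2n^2}\big(\textstyle\prod_{v\in S}\mathrm{Ht}_v(z_0.g)\big)^{-\delta_1}\Big),
\]
where $m_2\geq\max\{m,m_1\}$ is large enough for the Sobolev embeddings used (so that $\|\psi\|_{\infty}\ll\calS^{m_2}(\psi)$ and $\varepsilon^{\kappa_4}\leq\varepsilon^{c_0}$ absorb into the first bracketed term). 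Choosing $\varepsilon=\big(\prod_{v\in S}\mathrm{Ht}_v(z_0.g)\big)^{-\delta_1/(\kappa_3(m_1)+c_0)}$ balances the two terms and yields exactly the asserted inequality, with $\kappa_5=2n^2$ and $\delta_2=c_0\delta_1/(\kappa_3(m_1)+c_0)\in(0,1)$.

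I expect the main obstacle to be not any single step but the uniform control of the $\mathbf{N}(\mathfrak{l})$-dependence: one must verify that every constant produced in Lemmas~\ref{lemma_D}--\ref{lemma_G} and in Theorem~\ref{theorem_C} is independent of $\mathfrak{l}$, that the index factors $\normm{\Gamma(\mathfrak{l})\bs\Gamma}$ and $\normm{\Gamma(\mathfrak{l})\cap H_S\bs\Gamma\cap H_S}$ are polynomially bounded in $\mathbf{N}(\mathfrak{l})$, and --- most delicately --- that the Sobolev-norm bound for $\varphi^{\mathfrak{l}}_{\varepsilon}$ in Lemma~\ref{lemma_F} carries exactly the factor $\normm{\rmm_{Y_S(\mathfrak{l})}}\normm{\rmm_{Z_S(\mathfrak{l})}}$ and nothing worse. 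The conceptually subtle geometric input, namely the thickening/sandwich estimate where Lemma~\ref{lemma_D}(2) is used precisely to avoid conjugating $\psi$ by the large element $g$, is already available from \cite[\S11]{Benoist-Oh} and needs only to be transcribed with the congruence level $\mathfrak{l}$ present throughout.
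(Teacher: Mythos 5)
Your proposal carries out precisely what the paper intends: Theorem~\ref{theorem_H} is stated in the paper only as ``the analogue of [Benoist--Oh, Theorem~12.5]'', with the implication chain ``\ref{theorem_C}, \ref{lemma_D}, \ref{lemma_E}, \ref{lemma_F}, \ref{lemma_G} $\Rightarrow$ \ref{theorem_H}'' recorded in \S\ref{sec:intermediatelemma} and no proof written out, and your reduction to $g\in F_S$, unfolding of $\varphi^{\mathfrak{l}}_\varepsilon$ into the $\tau\otimes\rho$ product, wavefront sandwich, effective-mixing input, and $\varepsilon$-optimisation transcribe that argument with the level $\mathfrak{l}$ tracked throughout. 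This is the same route as the paper.

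One spot in the bookkeeping you flag as the ``main obstacle'' should be made explicit rather than taken for granted: with the paper's convention that $\calS^m$ is computed against the \emph{probability} measure $\rmm^{\bmone}_{Y_S(\mathfrak{l})}$, the Sobolev embedding $\|\psi\|_\infty\ll\calS^m(\psi)$ on $\scrC$ is not $\mathfrak{l}$-uniform. Since $\supp(\psi)$ injects into the fixed compact $\scrC\subset Y_S$ and $\pi^Y_{\mathfrak{l}}$ pushes $\rmm^{\bmone}_{Y_S(\mathfrak{l})}$ to $\rmm^{\bmone}_{Y_S}$, the probability measure assigns $\supp(\psi)$ mass of order $\normm{\rmm_{Y_S(\mathfrak{l})}}^{-1}$, so the embedding costs a factor $\normm{\rmm_{Y_S(\mathfrak{l})}}^{1/2}=O(\mathbf{N}(\mathfrak{l})^{n^2/2})$. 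This does not damage the theorem --- after the $\varepsilon$-optimisation the $\mathbf{N}(\mathfrak{l})^{2n^2}$ factor from $\calS^{m_1}(\varphi^{\mathfrak{l}}_\varepsilon)$ still dominates, so your proposed $\kappa_5=2n^2$ survives --- but both the $\varepsilon^{\kappa_4}\|\psi\|_\infty$ term in the mixing step and the sandwich error $\varepsilon^{c_0}\|\psi\|_\infty$ should carry this factor explicitly before being absorbed into $\calS^{m_2}(\psi)\,\mathbf{N}(\mathfrak{l})^{\kappa_5}$.
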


	\subsection{Proof of Theorem \ref{thm:effectivemain}}\label{se:pfeffectivemain}
	
	Define \( F_{N}^{\mathfrak{l}}:  Y_S(\mathfrak{l}) \to \mathbb{R}_{\geq 0} \) by
	\[
	F_{N}^{\mathfrak{l}}(o.g) := \sum_{\lambda \in 
		\Gamma(\mathfrak{l}) \cap H_S \bs \Gamma(\mathfrak{l})
	} \bmone_{\BB_S(N)}( z_0. \lambda g)
	\]
	where \( o \) denotes the identity coset in \( Y_S(\mathfrak{l})=\Gamma(\mathfrak{l}) \bs G_S \).
	With this definition,
	$F_{N}^{\mathfrak{l}}(o.\gamma) = \normm{
		z_0.\gamma \Gamma(\mathfrak{l}) \cap \BB_S(N) }
	$. Similarly define 
	$
	F_{N,\ep}^{\mathfrak{l},\pm }(o.g) := \sum_{\lambda \in 
		\Gamma(\mathfrak{l}) \cap H_S \bs \Gamma(\mathfrak{l})
	} \bmone_{B^{\pm}_{N,\ep}}( z_0. \lambda g)
	$.
	By definition we have that for every \( \gamma \in \Gamma \),
	\begin{equation}\label{equation_1}
		\int F_{N,\ep}^{\mathfrak{l},- }(o.\gamma g)  \alpha_{\ep}'(g)\, \mathrm{d}\rmm_{G_S}(g) 
		\leq  F_{N}^{\mathfrak{l} }(o.\gamma )
		\leq 
		\int
		F_{N,\ep}^{\mathfrak{l}, + }(o.\gamma g)  \alpha_{\ep}'(g)\,\mathrm{d}\rmm_{G_S}(g) 
	\end{equation}
	where \( \alpha'_{\varepsilon} :=  
	\left( \int \beta_{\ep}\, \diff\rmm_{H_S}  \right) ^{-1} \alpha_{\ep}\) so that $ \int \alpha_{\ep}'\, \diff\rmm_{G_S}  =1$  with $\alpha_{\ep},\beta_{\ep}$ from Lemma \ref{lemma_E} (take $U_f:= U_f^0$ from Lemma \ref{lemma_A}) for $0<\ep <\ep_3$.
	For convenience the left/right hand side of the above equation will be abbreviated as $\mathrm{LHS}$ or $\mathrm{RHS}$ below.
	Note that 
	\[
	\int \beta_{\ep}\,\diff\rmm_{H_S}  \geq \rmm_{H_S}(H_{\ep^2}) \geq \ep^{\kappa_6}
	\]
	for some $\kappa_6>0$.
	Thus $\calS^m(\alpha'_{\ep}) \leq C_3' \ep^{-\kappa_2'(m)}$ for some $C_3'>0$ and $\kappa_2':\Z^+ \to (0,+\infty)$.
	
	For \( \gamma \in \Gamma \), identifying \( H_{\ep} W_{\ep} \cong \Gamma(\mathfrak{l})\bs\Gamma(\mathfrak{l})\gamma H_{\ep}W_{\ep} \), we define $\alpha_{\ep,\gamma}^{\mathfrak{l}}(o.\gamma h w):= \alpha_{\ep}'(hw)$, a function on \( Y_S(\mathfrak{l})\) with the following properties:
	\begin{enumerate}
		\item \( \int \alpha_{\ep,\gamma}^{\mathfrak{l}} \, \diff\rmm_{Y_S(\mathfrak{l})}  = 1 \),
		\item \( \text{supp}(\alpha_{\ep,\gamma}^{\mathfrak{l}}) \subset  \Gamma(\mathfrak{l})\bs\Gamma H_{\ep}W_{\ep} \),
		\item \( \calS^{m}(\alpha_{\ep,\gamma}^{\mathfrak{l}}) =
		\normm{\rmm_{Y_S(\mathfrak{l})}}^{-1} \calS^m(\alpha_{\ep}') \leq C_3 \ep^{-\kappa_2'(m)} \) with $\kappa_2'(m):=\kappa_2(m)+\kappa_6$.
	\end{enumerate}
	By convention, the implicit measures in the definition of Sobolev norms are normalized to be probability measures whenever it is finite.
	A folding and unfolding argument yields that in Eq. (\ref{equation_1}):
	\begin{equation*}
		\begin{aligned}
			&
			\mathrm{LHS} = \int_{B^-_{N,\varepsilon}} \int_{Z_{S}{(\mathfrak{l})}} \alpha_{\ep,\gamma}^{\mathfrak{l}}(yg) \,\mathrm{d} \rmm_{Z_S(\mathfrak{l})}(y) \,\mathrm{d} \rmm_{X_S}(z_0.g) ,
			\\
			&
			\mathrm{RHS} = \int_{B^+_{N,\varepsilon}} \int_{Z_{S}{(\mathfrak{l})}} \alpha_{\ep,\gamma}^{\mathfrak{l}}(yg) \,\mathrm{d} \rmm_{Z_S(\mathfrak{l})}(y) \,\mathrm{d} \rmm_{X_S}(z_0.g) .
		\end{aligned}
	\end{equation*}

	Let \( \scrC = \Gamma \bs \Gamma H_{\ep_2}W_{\ep_2} \), then \( \psi:= \alpha^{\mathfrak{l}}_{\varepsilon, \gamma} \) satisfies the condition in Theorem \ref{theorem_H} for every \( 0 < \varepsilon < \varepsilon_2 \) and \( \gamma \in \Gamma \). Also note that \( \int \alpha^{\mathfrak{l}}_{\varepsilon, \gamma} \, \diff\rmm^{\bmone}_{Y_S(\mathfrak{l})}  = \normm{
		\rmm_{Y_{S}(\mathfrak{l})}
	} ^{-1} \).
	Applying Theorem \ref{theorem_H} to LHS, we get
	\begin{equation*}
		\begin{aligned}
			&
			\normm{
				\mathrm{LHS} - \frac{  \normm{\rmm_{Z_S(\mathfrak{l})}}
				}{
					\normm{\rmm_{Y_S(\mathfrak{l})}}
				} \rmm_{X_S}(B_{N,\ep}^-)
			}   \\
			\leq & 
			\normm{
				\rmm_{Z_S(\mathfrak{l})}
			} 
			\int_{B^-_{N,\varepsilon}} \left| 
			\normm{  \rmm_{Y_{S}(\mathfrak{l})} } ^{-1} -
			\int_{Z_{S(\mathfrak{l})}}   \alpha^{\mathfrak{l}}_{\varepsilon,\gamma}(yg) \,\mathrm{d} \rmm^{\bmone}_{Z_S(\mathfrak{l})}(y) \right|  
			\,\mathrm{d}\rmm_{X_S}(z_0.g) \\
			\leq &
			\normm{
				\rmm_{Z_S(\mathfrak{l})}
			} C_6 \calS^{m_2}(
			\alpha^{\mathfrak{l}}_{\varepsilon,\gamma}
			)  \mathbf{N}(\mathfrak{l})^{\kappa_5}  \int_{B^-_{N,\varepsilon}}  \left(\prod_{v\in S}\mathrm{Ht}_v(z_0.g) \right)^{-\delta_2} \,\mathrm{d}\rmm_{X_S}(z_0.g).
		\end{aligned}
	\end{equation*}
	Using Lemma \ref{lemma_A}, the above inequalities continue as
	\[
	\leq  \normm{
		\rmm_{Z_S(\mathfrak{l})}
	}
	\cdot C_5 C_3'  \cdot \ep^{-\kappa_2'(m_2)} \cdot \mathbf{N}(\mathfrak{l})^{\kappa_5} C_1 \cdot \rmm_{X_S}(\BB_S(N))^{1-\delta_0(\delta_2)}.
	\]
	We choose constants \( C_7, \, \kappa_7 >1, \, 0<\delta_3<1 \) such that the above quantity is bounded by \( C_7 \mathbf{N}(\mathfrak{l})^{\kappa_7} \ep^{-\kappa_7} \rmm_{X_S}(\BB_S(N))^{1-\delta_3} \).
	Repeating the above process (modifying \( C_6, \, \kappa_7 >0, \, 0<\delta_3<1 \) if necessary), we obtain
	\[
	\normm{
		\mathrm{RHS} - \frac{  \normm{\rmm_{Z_S(\mathfrak{l})}}
		}{
			\normm{\rmm_{Y_S(\mathfrak{l})}}
		} \rmm_{X_S}(B_{N,\ep}^+)
	} \leq  C_7 \mathbf{N}(\mathfrak{l})^{\kappa_7} \ep^{-\kappa_7} \rmm_{X_S}(\BB_S(N))^{1-\delta_3}.
	\]
	Recall from Lemma \ref{lemma_A} that
	$
	\rmm_{X_S}(B^{+}_{N,\varepsilon} \setminus B^{-}_{N,\ep}) \leq C_0 \ep^{\kappa_0} \rmm_{X_S}(\BB_S(N))
	$. 
	Therefore,
	\begin{equation*}
		\begin{aligned}
			&
			\left\vert 
			\normm{z_0 .\gamma \Gamma(\mathfrak{l}) \cap \BB_S(N)} -
			\frac{  \normm{\rmm_{Z_S(\mathfrak{l})} }
			}{
				\normm{ \rmm_{Y_S(\mathfrak{l})} }
			} \rmm_{X_S}(\BB_S(N))
			\right\vert 
			\\
			\leq &
			\normm{
		\mathrm{RHS} - \frac{  \normm{\rmm_{Z_S(\mathfrak{l})}}
		}{
			\normm{\rmm_{Y_S(\mathfrak{l})}}
		} \rmm_{X_S}(B_{N,\ep}^+)
	}
			 + 
			\left| \mathrm{LHS}-  \frac{  \normm{\rmm_{Z_S(\mathfrak{l})} }
			}{
				\normm{ \rmm_{Y_S(\mathfrak{l})} }
			} \rmm_{X_S}(B^-_{N,\ep})
			\right| + 
			\frac{  \normm{\rmm_{Z_S(\mathfrak{l})} }
			}{
				\normm{ \rmm_{Y_S(\mathfrak{l})} }
			}
			\left| \rmm_{X_S}(B^+_{N,\ep})- \rmm_{X_S}(B^-_{N,\ep}) \right| 
			\\
			\leq &
			2C_7 \mathbf{N}(\mathfrak{l})^{\kappa_7} \ep^{-\kappa_7} \rmm_{X_S}(\BB_S(N))^{1-\delta_3} +
			 \frac{  \normm{\rmm_{Z_S(\mathfrak{l})} }
			}{
				\normm{ \rmm_{Y_S(\mathfrak{l})} }
			} C_0 \ep^{\kappa_0} \rmm_{X_S}(\BB_S(N)).
		\end{aligned}
	\end{equation*}
	Fix $\kappa_8 > \kappa_7$ such that $   \frac{  \normm{\rmm_{Z_S(\mathfrak{l})} }
	}{
		\normm{ \rmm_{Y_S(\mathfrak{l})} }
	} \leq \mathbf{N}(\mathfrak{l})^{\kappa_8}$. Choose $\ep:=\ep_N:= \rmm_{X_S}(\BB_S(N))^{-\kappa_9}$ with $\kappa_9:= \delta_3/2\kappa_7$. Then the quantity above is bounded by
	\begin{equation*}
		2C_7 \mathbf{N}(\mathfrak{l})^{\kappa_7}\rmm_{X_S}(\BB_S(N))^{1-\delta_3/2} 
		+ 2C_0  \mathbf{N}(\mathfrak{l})^{\kappa_8} \rmm_{X_S}(\BB_S(N))^{1- \kappa_0\delta_3/ 2 \kappa_7} .
	\end{equation*}
	By setting $C_8:= 2 \max\{C_0, C_7 \}$ and $\delta_4:= \min\{
	\delta_3/2, \kappa_0 \delta_3/2\kappa_7
	\}$, the quantity above is bounded by $C_8 \mathbf{N}(\mathfrak{l})^{\kappa_8}\rmm_{X_S}(\BB_S(N))^{1- \delta_4} $ and the proof is complete, at least for $N $ satisfying $\rmm_{X_S}(\BB_S(N))^{-\kappa_9}<\ep_2$. But it is easy to adjust $C_8,\kappa_8,\delta_4$ such that it also holds for the remaining finitely many $N\in \Z^+$. \qed
	
		\subsection{Sketch of proof of Theorem \ref{thm:GtimesG}}
	The proof being similar to that of Theorem \ref{thm:effectivemain},
	the only modification needed is about Theorem \ref{theorem_B} where one replaces, writing $g=(g_1,g_2)$, $\widetilde{\xi}_S(g)$  by $\widetilde{\xi}^{\max}_S(g_1,g_2):= \max\{\widetilde{\xi}_S(g_1),\widetilde{\xi}_S(g_2) \}$. Again, this does not follow formally from Theorem \ref{theorem_B}, but rather its proof. The details are omitted here.

		\subsection{Consequence on counting integral points with local conditions}
		Let $\CX$ be the Zariski closure of $X_S$ inside $\BA^n_{\CO_S}$. 
		For $\mathfrak{l}$ an ideal of $\CO_{S}$ and $\bxi=(\xi_v)_{v\mid \mathfrak{l}}\in\prod_{v\mid \mathfrak{l}}\CX(\CO_v)$, we consider the following \emph{congruence neighbourhood}  \begin{equation}\label{eq:congneigh}
			\CE_S^{\CX}(\mathfrak{l};\bxi):=\prod_{v\mid \mathfrak{l}}\CE^{\CX}_v(\operatorname{ord}_{v}(\mathfrak{l});\xi_v)\times \prod_{v\nmid \mathfrak{l},v\notin S}\CX(\CO_v),
		\end{equation} where for each $v\mid \mathfrak{l}$ and integer $m$, we let \begin{equation}\label{eq:nuadiccong}
		\CE_v^{\CX}(m;\xi_v):=\left\{x_v\in\CX(\CO_v):\operatorname{Mod}_{v,m}(x_v)=\operatorname{Mod}_{v,m}(\xi_v)\right\},
	\end{equation} where we recall the reduction modulo $\mathfrak{m}_{v}^m$ map \eqref{eq:modvm}, which is a $v$-adic open-closed subset of $\CX(\CO_v)$. 
		It is clear that $\CE_S^{\CX}(\mathfrak{l};\bxi)$ is an open-closed subset of $X(\RA_k^{S\cup\infty_k})$.
		
		Recall that in the work of Borovoi--Rudnick \cite[\S5]{Borovoi-Rudnick}, a locally constant function $\boldsymbol{\delta}:X(\RA_k)\to \{0,\# C(H)\}$ is defined, which captures the adelic orbits of $G(\RA_{k})$ in $X(\RA_{k})$ containing rational points. In the case where $H$ is simply connected, the function $\delta$ is identically one.
		We apply Theorem \ref{thm:effectivemain} to deduce the following effective equidistribution result in terms of congruence neighbourhoods. We recall \eqref{eq:BSN}.
		\begin{theorem}\label{thm:equidistcongruence}
			Let $X=H\bs G$ be a symmetric space as in Theorems \ref{thm:effectivemain} or \ref{thm:GtimesG}. With the notation above, we have, uniformly for $\mathfrak{l}$  and $\bxi$,
			$$\# \left(X(k)\cap \BB_S(N)\times \CE_S^{\CX}(\mathfrak{l};\bxi)\right)=\int_{\BB_S(N)\times X_{\infty_k\setminus S}\times \CE_S^{\CX}(\mathfrak{l};\bxi)}\boldsymbol{\delta}\, \mathrm{d}\rmm_{X}+O_\varepsilon\left(\mathbf{N}(\mathfrak{l})^{\dim G+\kappa+\varepsilon}\cdot \rmm_{X_{S}}(\BB_S(N)) ^{1-\delta}\right)$$ for any $\varepsilon>0$.
		\end{theorem}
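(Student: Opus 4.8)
The plan is to deduce this from Theorem~\ref{thm:effectivemain} (or Theorem~\ref{thm:GtimesG} in the split case) by writing the relevant $S$-integral points as a finite union of $\Gamma(\mathfrak l)$-orbits, applying the effective count with its \emph{uniform} error term to each orbit, and then reassembling the main terms through the Siegel--Weil type formula of Appendix~\ref{se:Siegel-Weil}.

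First I would fix the $\CO_S$-model so that $z_0\in\CX(\CO_S)$, and observe that any point of $X(k)$ lying in $\BB_S(N)\times\CE_S^{\CX}(\mathfrak l;\bxi)$ is automatically $S$-integral, the constraint away from $S$ reducing to the single congruence $\operatorname{Mod}_{v,\operatorname{ord}_v(\mathfrak l)}(x)=\operatorname{Mod}_{v,\operatorname{ord}_v(\mathfrak l)}(\xi_v)$ for $v\mid\mathfrak l$, which depends only on the image of $x$ in $\CX(\CO_S/\mathfrak l\CO_S)$. By the finiteness of integral orbits (Borel--Harish-Chandra together with finiteness of class numbers), $\CX(\CO_S)$ is a finite disjoint union $\bigsqcup_{j}z^{(j)}.\Gamma$, where the stabiliser $H^{(j)}\subset G$ of $z^{(j)}$ is a symmetric $k$-subgroup; only finitely many $k$-isomorphism types of $H^{(j)}$ occur, so a single pair $\kappa,\delta$ can be chosen to serve them all (applying Theorem~\ref{thm:effectivemain} with base point $z^{(j)}$ and subgroup $H^{(j)}$). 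Each $z^{(j)}.\Gamma\cong(\Gamma\cap H^{(j)}_S)\bs\Gamma$ is itself a finite disjoint union of $\Gamma(\mathfrak l)$-orbits $z^{(j)}.\gamma\Gamma(\mathfrak l)$ indexed by $\gamma\in(\Gamma\cap H^{(j)}_S)\bs\Gamma/\Gamma(\mathfrak l)$, and the congruence condition picks out the subset $\CJ_j(\mathfrak l,\bxi)$ of those $\gamma$ for which $z^{(j)}.\gamma$ reduces modulo $\mathfrak l$ to the prescribed class.

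Next I would apply Theorem~\ref{thm:effectivemain} to each selected orbit $z^{(j)}.\gamma\Gamma(\mathfrak l)$. Since the theorem is uniform in the base coset, summing its error terms over the at most $\normm{\Gamma(\mathfrak l)\bs\Gamma}$ orbits and the finitely many $j$ produces $O\bigl(\normm{\Gamma(\mathfrak l)\bs\Gamma}\cdot\mathbf N(\mathfrak l)^{\kappa}\,\rmm_{X_S}(\BB_S(N))^{1-\delta}\bigr)$; together with the standard estimate $\normm{\Gamma(\mathfrak l)\bs\Gamma}\ll_\varepsilon\mathbf N(\mathfrak l)^{\dim G+\varepsilon}$ (multiplicativity over prime powers of the reduction map and $\#G(\CO/\mathfrak p^{e})\ll\mathbf N(\mathfrak p)^{e\dim G}$ for a smooth $\CO_S$-model of $G$) this is exactly the claimed error $O_\varepsilon\bigl(\mathbf N(\mathfrak l)^{\dim G+\kappa+\varepsilon}\rmm_{X_S}(\BB_S(N))^{1-\delta}\bigr)$. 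For the main term, writing $\rmm_X=\prod_v\rmm_{X,v}$ as a restricted product of local measures, one must verify that
\[
\sum_{j}\sum_{\gamma\in\CJ_j(\mathfrak l,\bxi)}\frac{\normm{\rmm_{Z^{(j)}_S(\mathfrak l)}}}{\normm{\rmm_{Y_S(\mathfrak l)}}}\,\rmm_{X_S}(\BB_S(N))=\int_{\BB_S(N)\times X_{\infty_k\setminus S}\times\CE_S^{\CX}(\mathfrak l;\bxi)}\boldsymbol\delta\ \mathrm d\rmm_X .
\]
Using $\normm{\rmm_{Z^{(j)}_S(\mathfrak l)}}/\normm{\rmm_{Y_S(\mathfrak l)}}=\normm{(\Gamma(\mathfrak l)\cap H^{(j)}_S)\bs(\Gamma\cap H^{(j)}_S)}/\normm{\Gamma(\mathfrak l)\bs\Gamma}$, the inner sum becomes the fraction of $\Gamma(\mathfrak l)$-orbits in $z^{(j)}.\Gamma$ that meet the congruence neighbourhood; by strong approximation for the simply connected group $G$ (surjectivity of $\Gamma\to G(\CO_S/\mathfrak l\CO_S)$, up to a bounded index harmlessly absorbed into $\varepsilon$) this fraction equals the local density $\prod_{v\mid\mathfrak l}\rmm_{X,v}(\CE_v^{\CX}(\operatorname{ord}_v(\mathfrak l);\xi_v))/\rmm_{X,v}(\CX(\CO_v))$. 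Feeding in the remaining local volumes $\prod_{v\nmid\mathfrak l,\ v\notin S}\rmm_{X,v}(\CX(\CO_v))$ and the archimedean factor over $X_{\infty_k\setminus S}$, and summing the resulting products of local densities over the $G(k)$-orbits in $X$, the Siegel--Weil formula of Appendix~\ref{se:Siegel-Weil} identifies the total with $\#C(H)$ times the Tamagawa volume of the union of rational adelic orbits inside the prescribed region, that is, with $\int\boldsymbol\delta\,\mathrm d\rmm_X$ over that region.

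The orbit bookkeeping and the subgroup-index estimate in the first two steps are routine once Theorem~\ref{thm:effectivemain} is in hand. I expect the main-term identification to be the real obstacle: one has to match the \emph{sum} of the elementary leading constants $\normm{\rmm_{Z^{(j)}_S(\mathfrak l)}}/\normm{\rmm_{Y_S(\mathfrak l)}}$, over all relevant $\Gamma(\mathfrak l)$-orbits and all $G(k)$-orbits, with a single clean Tamagawa integral carrying the Borovoi--Rudnick weight $\boldsymbol\delta$. This needs two inputs working together: strong approximation for $G$, so that the congruence conditions equidistribute among the $\Gamma(\mathfrak l)$-cosets and the orbit count is controlled by $v$-adic local volumes; and the Siegel--Weil identity of the appendix, which repackages the sum over $G(k)$-orbits of products of local densities as $\#C(H)$ times a Tamagawa volume.
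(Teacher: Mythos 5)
Your outline follows the same route as the paper's own proof: decompose the relevant $S$-integral points into finitely many $\Gamma'$-orbits for a suitable principal congruence subgroup $\Gamma'$, apply Theorem~\ref{thm:effectivemain} to each orbit with its uniform error, bound the number of orbits by $\ll_\varepsilon\mathbf N(\mathfrak l)^{\dim G+\varepsilon}$, and identify the summed main terms via the Siegel--Weil formula of Appendix~\ref{se:Siegel-Weil}. Two remarks on where the paper is tighter. First, the paper works with $\Gamma(\mathfrak l_0)$ for an ideal $\mathfrak l_0$ that is a bounded multiple of $\mathfrak l$, chosen so that $\CE_S^{\CX}(\mathfrak l;\bxi)\cap\mathbf O_{\RA}^S$ is genuinely $\Gamma(\mathfrak l_0)$-invariant for every adelic orbit $\mathbf O_{\RA}$ meeting $\CX(\CO_S)$; you implicitly assume $\Gamma(\mathfrak l)$-invariance, which need not hold before intersecting with an adelic orbit. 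Second, for the main term the paper simply reads off Theorem~\ref{thm:BRSiegelWeil} at level $\mathfrak l_0$, with $B^S=\CE_S^{\CX}(\mathfrak l;\bxi)\cap\mathbf O_{\RA}^S$ and $K^S$ the level-$\mathfrak l_0$ congruence subgroup: that single identity says the sum of the constants $\normm{\rmm_{Z_S(\mathfrak l_0)}}/\normm{\rmm_{Y_S(\mathfrak l_0)}}$ over all $\Gamma(\mathfrak l_0)$-orbits equals $\#C(H)\left|\rmm_{X_{S^c}}\right|\rmm_X^S(B^S)$, and no explicit local-density computation is needed. Your detour through strong approximation and a product of $v$-adic densities amounts to re-deriving a piece of the appendix's proof of Theorem~\ref{thm:BRSiegelWeil}; in particular the aside ``up to a bounded index harmlessly absorbed into $\varepsilon$'' is incorrect as stated, since the main term must be an exact equality --- there is nowhere to put an $\varepsilon$-loss on that side. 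The remark happens also to be unnecessary, because strong approximation for the simply connected group $G$ gives exact surjectivity of $\Gamma\to\CG(\CO_S/\mathfrak l\CO_S)$, but the cleanest fix is simply to invoke Theorem~\ref{thm:BRSiegelWeil} directly as the paper does.
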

\begin{proof}
	The deduction of Theorem \ref{thm:equidistcongruence} is similar to that of \cite[Theorem 5.3]{Borovoi-Rudnick} and its effective version \cite[Corollary 2.5]{Browning-Gorodnik}. By work of Borel--Harish-Chandra \cite{Borel-HC}, there are only finitely orbits of $\Gamma\cap G(k)$ in $\CX(\CO_S)$. As $X_S$ itself is a $G_S$-orbit, the number of adelic orbits of $G(\RA_{k})$ in $X(\RA_{k})$ intersecting $X_S\cap\CX(\CO_S)$ is thus also finite. We fix an ideal $\mathfrak{l}_{0}$ such that $\mathfrak{l}\mid\mathfrak{l}_0$ and for every such adelic orbit $\mathbf{O}_{\RA}=X_S\times\mathbf{O}_{\RA}^S$ (so that $\boldsymbol{\delta}|_{\mathbf{O}_{\RA}}\equiv \# C(H)$),  the set $\CE_S^{\CX}(\mathfrak{l};\bxi)\cap \mathbf{O}_{\RA}^S$ (which is open by \cite[Lemma 1.6.4]{Borovoi-Rudnick}) is invariant under the action of the group $\Gamma(\mathfrak{l}_0)$ (the principal congruence subgroup modulo $\mathfrak{l}_0$ \eqref{eq:gammal}). Let $\scrO$ be such an orbit of $\Gamma(\mathfrak{l}_0) $ in $\CE_S^{\CX}(\mathfrak{l};\bxi)\cap \mathbf{O}_{\RA}^S\cap X(k)$. The number of such $\scrO$ is $\leq \#(\Gamma(\mathfrak{l}_0)\bs\Gamma) \ll_\varepsilon \mathbf{N}(\mathfrak{l})^{\dim G+\varepsilon}$ by the Lang--Weil estimate (see e.g. \cite[Theorem 3.5]{Cao-Huang1}). Fixing $z_0\in\scrO$, we infer from Theorem \ref{thm:effectivemain} that 
	$$\# \left(\scrO\cap \BB_S(N)\right)=\frac{\rmm_{H_S}(Z_S(\mathfrak{l}))}{\rmm_{G_S}(Y_S(\mathfrak{l}))}\rmm_{X_S}(\BB_S(N))+O(\mathbf{N}(\mathfrak{l})^{\kappa} \cdot \rmm_{X_{S}}(\BB_S(N)) ^{1-\delta}),$$ where $\kappa>0,0<\delta<1$ are independent of $\scrO,\mathfrak{l},N$.
	Summing over all such orbits $\scrO$ and appealing to the Siegel--Weil formulae (Theorem \ref{thm:BRSiegelWeil}), we obtain 
	\begin{multline*}
		\# \left(X(k)\cap \BB_S(N)\times (\CE_S^{\CX}(\mathfrak{l};\bxi)\cap \mathbf{O}_{\RA}^S)\right)\\=\#C(H)\rmm_X\left(\BB_S(N)\times X_{\infty_k\setminus S}\times (\CE_S^{\CX}(\mathfrak{l};\bxi)\cap \mathbf{O}_{\RA}^S)\right)+O_\varepsilon\left(\mathbf{N}(\mathfrak{l})^{\dim G+\kappa+\varepsilon}\cdot \rmm_{X_{S}}(\BB_S(N)) ^{1-\delta}\right).
	\end{multline*}
	It remains to sum over all such adelic orbits $\mathbf{O}_{\RA}$ to obtain the desired formula.
\end{proof}

\begin{corollary}
	The symmetric space $X=H\bs G$ as in Theorems \ref{thm:effectivemain} or \ref{thm:GtimesG} is Hardy--Littlewood off any single place over which $G$ is not compact.
\end{corollary}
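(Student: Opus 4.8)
The plan is to read the corollary off the effective equidistribution statement of Theorem~\ref{thm:equidistcongruence}: the Hardy--Littlewood asymptotic is essentially that result, once a general compact test region is decomposed into congruence neighbourhoods. Fix a place $v$ over which $G$ has no compact factors, so that Theorems~\ref{thm:effectivemain}--\ref{thm:GtimesG} apply, and let $B_v\subset X(\RA_k^v)$ be compact; as is standard we restrict to $B_v$ with negligible boundary, and we enlarge by a finite set $T_0$ of finite places so that the off-$S$ part of $B_v$ lies in $\prod_w\CX(\CO_w)$, where $S:=\infty_k\cup\{v\}\cup T_0$. Work in the setup of \S\ref{se:effective} with this $S$, $S_1:=\{v\}$ and $S_2:=S\setminus\{v\}$; then $X_{\infty_k\setminus S}$ is a point, $X(\RA_k^v)=X_{S_2}\times X(\RA_k^{S\cup\infty_k})$, and $B_{S_1}(N)=\{\bx\in X(k_v):\operatorname{Ht}_v(\bx)\leq N\}$. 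Writing the Tamagawa measure as $\rmm_X=\rmm_{X_v}\otimes\rmm_X^{v}$ and $M(N):=\rmm_{X_v}(B_{S_1}(N))$, the volume estimates for affine quadrics and semisimple groups (following \cite{Chamber-Loir_Tschinkel_2010_Igusa_integral}) give $M(N)\to\infty$, since $X(k_v)$ is non-compact, and $\rmm_{X_S}(\BB_S(N))=\rmm_{X_{S_2}}(\mathfrak D_{S_2})\,M(N)\asymp M(N)$ for any $\mathfrak D_{S_2}$ as in \eqref{eq:BSN} with compact closure inside a fixed compact set.

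The core is a box decomposition. Given $\eta>0$, using that $X(\RA_k^{S\cup\infty_k})$ is totally disconnected (so that a fixed modulus $\mathfrak l$ partitions $\prod_w\CX(\CO_w)$ into the congruence neighbourhoods $\CE_S^{\CX}(\mathfrak l;\bxi)$) and that regions with piecewise smooth boundary are $\rmm_{X_{S_2}}$-null, a routine approximation produces a single ideal $\mathfrak l=\mathfrak l(\eta)$, finitely many tuples $\bxi_j$ (one per residue class mod $\mathfrak l$ meeting $B_v$), and regions $\mathfrak D_j^{\pm}\subset X_{S_2}$ with piecewise smooth boundary and compact closure such that
\[
B_v^{-}:=\bigsqcup_j \mathfrak D_j^{-}\times\CE_S^{\CX}(\mathfrak l;\bxi_j)\ \subseteq\ B_v\ \subseteq\ \bigsqcup_j \mathfrak D_j^{+}\times\CE_S^{\CX}(\mathfrak l;\bxi_j)=:B_v^{+},\qquad \rmm_X^{v}(B_v^{+}\setminus B_v^{-})<\eta .
\]
Crucially $\mathfrak l$ and the number $J$ of boxes depend only on $\eta$. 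Applying Theorem~\ref{thm:equidistcongruence} with $\mathfrak D_{S_2}:=\mathfrak D_j^{\pm}$ to each box and summing over the disjoint boxes,
\[
\#\bigl(X(k)\cap B_v^{\pm}\times B_{S_1}(N)\bigr)=\int_{B_v^{\pm}\times B_{S_1}(N)}\boldsymbol\delta\,\diff\rmm_X+O_{\eta,\varepsilon}\!\bigl(\mathbf N(\mathfrak l)^{\dim G+\kappa+\varepsilon}M(N)^{1-\delta}\bigr).
\]
Since $B_v^{-}\subseteq B_v\subseteq B_v^{+}$ and $0\leq\boldsymbol\delta\leq\# C(H)$, the two main terms differ by at most $\# C(H)\,\eta\,M(N)$, whence
\[
\Bigl|\#\bigl(X(k)\cap\BB_v(N)\bigr)-\int_{\BB_v(N)}\boldsymbol\delta\,\diff\rmm_X\Bigr|\leq \# C(H)\,\eta\,M(N)+O_{\eta,\varepsilon}\!\bigl(\mathbf N(\mathfrak l)^{\dim G+\kappa+\varepsilon}M(N)^{1-\delta}\bigr).
\]
Dividing by $M(N)$, letting $N\to\infty$ and then $\eta\to0$ yields $\#(X(k)\cap\BB_v(N))=\int_{\BB_v(N)}\boldsymbol\delta\,\diff\rmm_X+o(M(N))$; when $\int_{B_v}\boldsymbol\delta\,\diff\rmm_X^{v}>0$ the main term is $\asymp M(N)$, giving the claimed asymptotic, while otherwise $\boldsymbol\delta$ vanishes on a neighbourhood of $B_v$ and both sides are $0$.

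The main obstacle is the box decomposition: one must sandwich an arbitrary (well-rounded) compact $B_v$ between finite unions of boxes whose modulus $\mathfrak l$ and number $J$ are \emph{independent of $N$} — otherwise the error term of Theorem~\ref{thm:equidistcongruence}, which is uniform in $\mathfrak l$ but polynomial in $\mathbf N(\mathfrak l)$, would swamp the main term $M(N)$ — while keeping the main-term loss $\rmm_X^{v}(B_v^{+}\setminus B_v^{-})$ under control using only $0\leq\boldsymbol\delta\leq\# C(H)$. All the genuine content is already contained in Theorem~\ref{thm:equidistcongruence}, hence in Theorem~\ref{thm:effectivemain}; the rest is organisational.
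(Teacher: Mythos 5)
Your proposal is correct and supplies the argument the paper leaves implicit: the text after the corollary only says ``This should already be implicitly known though we could not find it in the literature,'' so there is no written proof in the paper to compare against, but the route you take --- decomposing an arbitrary compact $B_v$ into finitely many products of piecewise-smooth $S_2$-regions times congruence neighbourhoods at a single modulus $\mathfrak l(\eta)$, applying Theorem~\ref{thm:equidistcongruence} box-by-box, and then letting $N\to\infty$ before $\eta\to 0$ --- is exactly the standard Borovoi--Rudnick sandwiching argument that the authors are implicitly invoking. You correctly identify the one non-trivial point, namely that $\mathfrak l$ and the number of boxes must be fixed independently of $N$ so that the polynomial-in-$\mathbf N(\mathfrak l)$ error does not swamp the main term; this is precisely what the uniformity in $\mathfrak l$ of Theorem~\ref{thm:equidistcongruence} buys.

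Two small remarks for precision. First, the partition $S=S_1\sqcup S_2$ in \S\ref{se:effective} requires $S_1$ to consist of places where $G$ has \emph{no compact factors}, which is what you assume; the corollary's phrase ``not compact'' is looser, and strictly speaking your argument (and the theorems it relies on) prove the ``no compact factors'' version --- in the applications ($G=\mathrm{Spin}(q)$, absolutely almost simple) these coincide, so this is the intended reading. Second, the Hardy--Littlewood definition should be understood, as in Borovoi--Rudnick, for $B_v$ whose boundary is $\rmm_X^{v}$-null; you flag this correctly, and this hypothesis is exactly what makes the two-sided box approximation $\rmm_X^{v}(B_v^{+}\setminus B_v^{-})<\eta$ achievable with a modulus and a number of boxes depending only on $\eta$.
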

This should already be implicitly known though we could not find it in the literature.

\subsection{Example -- Affine quadrics}\label{se:affquavol}
    Let $q$ be a non-degenerate quadratic form in $n$ variables with coefficients in $k$ and $m\in k^{\times}$. Let $$X:=(q=m)\subset \BA^n_k$$ be an affine quadric. Let $v$ be a place of $k$. Assume that $q$ is $k_v$-isotropic and $X(k_v)\neq \varnothing$. Define $$\BB_v(N):= \left\{\bx\in k_v^n \midd  q(\bx)=m,\; \mathrm{Ht}_v(\bx)\leq N \right\}.$$ Then there exist $0<c<C<\infty$ such that \begin{equation}\label{eq:affquavol}
        cN^{n-2}< \rmm_X(\BB_v(N))< CN^{n-2}\quad\text{as}~ N\to\infty.
    \end{equation}

The growth of volume balls is intimately related via Tauberian-type theorems (cf. e.g. \cite[Appendix A]{Chamber-Loir_Tschinkel_2010_Igusa_integral}) to the height zeta function
\[
 Z(s):= \int_{\bx\in X(k_v)} \mathrm{Ht}^{-s}_{v}(\bx) \, \mathrm{d}\normm{\omega}_{v}(\bx)
\]
which admits a real pole at $s=n-2$. Note that $n-1=n-2+1$ is the order of the pole of the natural volume form on the affine quadric $X$ along the boundary divisor that compactifies it into the projective space $\mathbb{P}^n$. In order to keep our paper as self-contained as possible, we choose to give details for proof of \eqref{eq:affquavol} in the case where $v$ is non-archimedean, as it will be used in \S\ref{se:geomsieve}. The archimedean case works \emph{verbatim}.

We recall that the (normalized) Tamagawa measure $\rmm_X$ is induced from the a gauge (volume) form $\omega$ on $X$ satisfying
\[
   \omega \wedge \diff q = \diff x_1 \wedge \cdots \wedge \diff x_n.
\] We may diagonalize the quadratic form $q$ (as this only affects the height balls up to a bounded constant) so that 
\[
 q(x_1,\cdots,x_n) = a_1x_1^2 +\cdots +a_n x_n^2
\]
with $a_1,\cdots,a_n \in k^{\times}$.
So when $x_i \neq 0,1\leq i\leq n$, we have
\begin{equation}\label{eq:omega}
      \omega = \pm\frac{1}{2a_ix_i} \diff x_1\wedge\cdots\wedge \diff x_{i-1}\wedge \diff x_{i+1} \wedge\cdots\wedge \diff x_n.
\end{equation}

We now embed $\mathbb{A}^n \embed \mathbb{P}^n$ via
$(x_1,\cdots,x_n)\mapsto [1: x_1:\cdots:x_n]$.  Then the height function \eqref{eq:height} extends to $\BP^n$ so that for $\by=[y_0:\cdots:y_n]$ with $y_0\neq 0$ (note that $x_i=\frac{y_i}{y_0},1\leq i\leq n$), $$\mathrm{Ht}_{v}(\by) =
     \frac{\max_{i=1,\cdots,n}\normm{y_i}_{v}}
    {\normm{y_0}_{v}}.$$
Let $$\overline{X}=\left(\sum_{i=1}^n a_i y_i^2 = m y_0^2\right)\subset\BP^n$$ be the Zariski closure of $X$ in $\mathbb{P}^n$.
The boundary divisor $\partial X := \overline{X} \setminus X$ is the intersection of the hyperplane $y_0=0$ with $\overline{X}$, which is thus the projective quadric 
     $$\sum_{i=1}^n a_i y_i^2 = 0.$$
 
We wish to study in more detail the behavior of $Z(s)$ near the boundary divisor $\partial X$. Let $P\in \partial X(k_v)$. We may assume that $y_n(P)\neq 0$, and introduce the coordinate system $z_i=\frac{y_i}{y_n},i=0,1,\cdots,n-1$. By implicit function theorem, if we choose $\bz=(z_0,\cdots,z_{n-2})$ as a local coordinate system for $\overline{X}(k_v)$ near $P\in \partial X(k_v)$, there exists a bounded $v$-adic neighborhood $\CN_P\subset k_v^{n-1}$ of $\bz(P)=(0,z_1(P),\cdots,z_{n-2}(P))$ and an analytic function $u_P: \CN_P \to k_{v}$ such that for all $\bz\in \CN_P$, $[\bz:u_P(\bz):1]\in\overline{X}(k_v)$, i.e., $z_{n-1}$ is solved by $u_P$. Then the volume form $\omega$ \eqref{eq:omega} restricted to $\CN_P\cap (y_0\neq 0)\cap (y_n\neq 0)$ is 
$$\omega  =
    \pm \frac{z_{0}}{2a_n} 
    \left( \bigwedge_{j=1}^{n-2} \diff\left(\frac{z_{j}}{z_{0}}\right) \right)\wedge \diff\left(\frac{u_P(\bz)}{z_{0}}\right)
    = \pm  \frac{u_P(\bz)}{2a_n z_{0}^{n-1}} 
    \left( \bigwedge_{j=0}^{n-2}\diff{z_{j}} \right).$$ 
    
Let us consider the height zeta function restricted to  $\CN_P$: $$Z_{P}(s)
        :=\int_{\bx\in X(k_{v}) \cap \CN_P} \mathrm{Ht}_{v} ^{-s}(\bx)\,  \mathrm{d}\normm{\omega}_{v}(\bx).$$ Then
\begin{equation*}
    \begin{aligned}
     Z_{P}(s) &= \int_{\bz\in \CN_P} \left(\frac{\max(|z_i|_{v},1\leq i\leq n-1,1)}{|z_0|_{v}}\right)^{-s} \left|\frac{u_P(\bz)}{2a_n z_0^{n-1}}\right|_{v}\mathrm{d}\bz \\ &=\int_{\bz\in \CN_P}\phi_s(\bz)|z_0|_{v}^{s-n+1}\mathrm{d}\bz,
    \end{aligned}
\end{equation*}
where for simplicity we write 
\[
   \phi_s(\bz):= \frac{
           |u_P(\bz)|_{v}
        }
        {|2a_n|_{v}
            \max(|z_i|_{v},1\leq i\leq n-2,|u_P(\bz)|_{v},1)
        }.
\]
By shrinking $\CN_P$, we may assume that  $\phi_s: \CN_P \to \R_{>0}$ is bounded away from zero and infinity uniformly for $s$ in bounded strips. As $v$ is a non-archimedean place, we may further assume that $\CN_P$ takes the form (let $\pi_{v}$ be the uniformizer of the local ring $\mathcal{O}_v$)
\[
\calN_P = \pi_{v}^{r_P} \mathcal{O}_{v} \times \calN'_P
\]
for some $r_P \in \Z^+$ and bounded $\calN'_P\subset k_v^{n-2}$.
Now 
\begin{align*}
      Z_P(s) &= \int_{(z_0,\bz')\in\pi_{v}^{r_P} \CO_{v} \times \calN'_P} \normm{z_0}_{v}^{s-n+1} \phi_s(z_0,\bz') \, \diff z_0 \diff \bz'\\ &= 
         \int_{\bz'\in\calN_P'} \phi_s(0,\bz') \left(
         \int_{z_0\in\pi_{v}^{r_P} \CO_{v}} \normm{z_0}_{v}^{s-n+1}  \,\diff z_0
         \right) \diff \bz'+
          \int_{\bz\in\CN_P} \normm{z_0}_{v}^{s-n+2} 
          \frac{
              \phi_s(z_0,\bz') - \phi_s(0,\bz')
          }{\normm{z_0}_{v}
          }\, \diff \bz.
\end{align*}
 Note that
\[
   \int_{\pi_{v}^{r_P} \CO_{v}}   \normm{z_0}_{v}^{s-n+1}   \, \diff z_0
    = 
   \sum_{r\geq r_P} \normm{\pi_{v}}_{v}^{-r}  \cdot \normm{\pi_{v}}_{v}^{-r(s-n+1)}
   = \frac{
     \normm{\pi_{v}}_{v}^{-r_P(s-n+2)}
   }{1 - \normm{\pi_{v}}_{v}^{-s+n-2}}
\]
is meromorphic in $s\in \C$ with poles of order $1$ at $s= n-2+ \frac{2\pi i }{\log\normm{\pi_{v}}_{v}} \cdot \Z$.
On the other hand, the function  $\frac{\phi_s(z_0,\bz') - \phi_s(0,\bz')}{\normm{z_0}_{v}}$ 
is bounded. So the second term is absolutely convergent and analytic for all $\mathrm{Re}(s)>n-3$ and is bounded in vertical strips.
By applying a suitable Tauberian theorem (see the discussion above \cite[Corollary A.4]{Chamber-Loir_Tschinkel_2010_Igusa_integral} for details), we obtain that  $$\rmm_X(\BB_v(N) \cap \calN_P)\asymp N^{n-2}.$$  Covering $\overline{X}(k_v)$ by finitely many such $\calN_P$'s together with some bounded (compact) set in $X(k_v)$, we find that $0<c<\rmm_X(\BB_v(N))/N^{n-2} <C<\infty$ for suitable constants $c,C$.

\subsection{Volume asymptotics after Chambert-Loir and Tschinkel}\label{se:volasy}
The goal of this section is to sketch a general framework due to Chambert-Loir--Tschinkel  \cite{Chamber-Loir_Tschinkel_2010_Igusa_integral} concerning growth of volume balls for varieties over local fields  and giving a geometric interpretation of the exponents $a,b$.

Let $\mathscr{X}$ be a smooth projective variety over $k$, and  $\omega_{\mathscr{X}}$ be the canonical line bundle. In the setting of \cite[Section 4.2]{Chamber-Loir_Tschinkel_2010_Igusa_integral}, let $(\mathscr{D}_{\alpha})_{\alpha \in \mathscr{A}}$ be a finite collection of irreducible reduced divisors, and let $\mathscr{U}$ be the complement of $\cup_{\alpha\in \mathscr{A}} \mathscr{D}_{\alpha}$ in $\mathscr{X}$. Let $(d_{\alpha})_{\alpha \in \mathscr{A}}$ be a collection of positive integers that defines the divisor $\mathscr{D}:=\sum_{\alpha \in \scrA} d_{\alpha} \mathscr{D}_{\alpha}$. Let $\mathscr{L}$ be an effective  divisor  on  $\mathscr{X}$ such that the support of $\mathscr{D}$ is contained in that of $\mathscr{L}$. 

Let us fix $S$ a finite set of places of $k$. We fix a choice of smooth metric $(\norm{\cdot}_{\mathscr{O}_{\mathscr{X}}(\mathscr{L}),v})_{v\in S}$ for the line bundle $\mathscr{O}_{\mathscr{X}}(\mathscr{L})$, and define for $v\in S$ the height function $\mathrm{Ht}_v:\scrX(k_v)\to \BR_{>0}$ as \begin{equation}\label{equation_height} \mathrm{Ht}_v(\bx):= \norm{\bmone_{\mathscr{L}}(\bx)}_{\mathscr{O}_{\mathscr{X}}(\mathscr{L}),v}^{-1}.\end{equation} Equipping $\omega_{\mathscr{X}}(\mathscr{D})$ with a smooth metric $(\norm{\cdot }_{\omega_{\mathscr{X}}(\mathscr{D}),b})_{v\in S}$,
let $\tau_{(\mathscr{X},\mathscr{D}),v}$ be the measure on $\scrX(k_v)$ locally induced by the volume form
\[
\frac{\theta}{\norm{ 
(\theta\otimes \bmone_{\mathscr{D}})
}_{\omega_{\mathscr{X}}(\mathscr{D}),v}
},
\]
where $\theta$ is a nonvanishing local section of $\omega_{\mathscr{X}}$.
With all these data, we associate the height function 

   $$ Z(s):= \int_{\bx=(x_v)\in\mathscr{X}_S}  \left(\prod_{v\in S}\mathrm{Ht}_v(x_v)\right)^{-s}  \,\diff \left(\prod_{v\in S}\tau_{(\mathscr{X},\mathscr{D}),v}(x_v)\right).$$

Assuming that $\mathscr{D}$ is a strictly normal crossings divisor over the the algebraic closure, the volume growth can be read from the corresponding analytic Clemens complex. Concretely, define two constants $a,b$ as follows:
\begin{itemize}
    \item For each $v\in S$, $a_v:= \max_{\alpha \in \mathscr{A}} \left\{
      \frac{d_{\alpha}-1}{\lambda_{\alpha}}
      \midd  \mathscr{D}_{\alpha}(k_v)\neq \varnothing
    \right\}$;
    \item For each $v\in S$, let $\mathscr{A}_0\subset \mathscr{A}$ be the subset where the above $\max$ is attained and $b_v$ be the maximal number such that there exist $\beta_1,...,\beta_{b_v}\in \mathscr{A}_0$ with $\cap_{i=1}^{b_v} \scrD_{\beta_i}(k_v)\neq \varnothing$. 
    \item $a:= \max\{a_v ,\; v\in S\}$ and $b:=\sum_{v\in S,a_v=a} b_v$.
\end{itemize}
For $N>1$, consider the volume ball $$\BB_S(N):=\left\{\bx=(x_v)\in \scrX_S\midd \prod_{v\in S}\mathrm{Ht}_v(x_v)\leq N \right\}.$$
Then the results of \cite[\S4 \& Appendix]{Chamber-Loir_Tschinkel_2010_Igusa_integral} show that (in the case of $a >0$) $Z(s)$ has a meromorphic continuation to $\Re(s)>a-\delta$ for any $\delta>0$ with a pole of order $b$ at $s=a$.
This implies that there exist some $0<c<C<\infty$ such that for $N$ sufficiently large
\begin{equation}\label{eq:Svol}
        cN^{a}(\log N)^{b-1}< \left(\prod_{v\in S}\tau_{(\mathscr{X},\mathscr{D}),v}\right)(\BB_S(N))< CN^{a}(\log N)^{b-1}.
\end{equation}

\begin{example}[Affine quadrics]
Recalling the notation used in \S\ref{se:affquavol},  $\mathscr{U}$ is the affine quadric $X$, $\mathscr{X}$ is the compactification $\overline{X}$ and $\mathscr{A}$ is a singleton $\{\alpha_0\}$ with $\mathscr{D}_{\alpha_0}= \partial X$. 
Let $\mathscr{L}=\partial X = \mathscr{D}_{\alpha_0}$, and  $\mathscr{O}_{\mathscr{X}}(\mathscr{L})$ be equipped with the metric that coincides with \eqref{eq:height}.  
   Let $d_{\alpha_0}:=n-1$ and $\scrD := (n-1)\scrD_{\alpha_0}$. 
The measure $\tau_{(\mathscr{X},\mathscr{D})}$ is induced by the gauge form $\omega$ on $X$. We see that $$a=(n-1)-1=n-2,\quad b=1,$$ and we thus recover \eqref{eq:affquavol}. 
Indeed, the calculation in the previous section can be regarded as a special case of \cite[\S5.4]{Chamber-Loir_Tschinkel_2010_Igusa_integral}.\end{example} 

\begin{example}[Semisimple groups]
Let $G$ be a semisimple simply connected linear algebraic group over $k$. We give a brief sketch for the volume asymptotics on $G$  in the case where $G$ is $k$-split. We refer to \cite[\S 5.2]{Chamber-Loir_Tschinkel_2010_Igusa_integral} for more details about general cases. 

Let us fix $T\subset B \subset G$ where $B$ is a Borel $k$-subgroup and $T$ is a maximal $k$-split torus contained in $B$. We let $\Delta=\{\alpha_1,...,\alpha_l\}$ (so $l$ is the rank of $G$) be the corresponding simple roots.
Let $\lambda^+$ be a regular dominant weight and let $\psi_{\lambda^+}: G \to \mathrm{GL}(V_{\lambda^+})$ be the associated irreducible representation. Let $G^{\mathrm{ad}}$ be the adjoint quotient of $G$.
Then $\psi_{\lambda^+}$ induces a projective representation $\overline{\psi_{\lambda^+}}: G^{\mathrm{ad}} \to \mathrm{PGL}(V_{\lambda^+}) \subset \BP(\mathrm{End}(V_{\lambda^+}))$.
We fix a basis of $V_{\lambda^+}$ consisting of $T$-eigenvectors so that 
we identify $\BP(\mathrm{End}(V_{\lambda^+}))$ with $\BP^{n^2-1}$, where $n=\dim V_{\lambda^+}$. 
Let $L_0$ be the divisor $\{\det=0\}$ in $\BP(\mathrm{End}(V_{\lambda^+}))$. We equip $\mathscr{O}_{\BP^{n^2-1}}(L_0)$, which is isomorphic to $\mathscr{O}(n)$, with a smooth metric such that 
\[
    \norm{\bmone_{L_0} (
    \overline{g}
    ) }_{\mathscr{O}(n),v}^{-1} = 
    \norm{ g }_v,
    \quad
    \forall \, g\in \SL(V_{\lambda^+})(k_v)
\]
In particular,
\[
    \norm{\bmone_{L_0} (
    \overline{\psi_{\lambda^+}}(\overline{g})
    ) }_{\mathscr{O}(n),v}^{-1} = 
    \norm{\psi_{\lambda^+}(g )}_v,
    \quad
    \forall \, g\in G(k_v)
\]
where $\overline{g}$ is the image of $g$ in $G^{\mathrm{ad}}$.

Let  $\mathscr{X}$ be the Zariski closure of $G^{\mathrm{ad}}$ in $\BP(\mathrm{End}(V_{\lambda^+}))$. The work of De Concini--Procesi \cite{DeConcini_Procesi_symmetric} shows that $\mathscr{X}$ is smooth with strictly normal crossing boundary $\sum_{i=1}^l \mathscr{D}_i$ labelled by the simple roots. Moreover,
let $\beta$ be the sum of all positive roots, and define $(m_i)_{1\leq i\leq l}$ by
\[ 
 \beta = \sum_{i=1}^l m_i \alpha_i.
\]
Also define $(d_i)_{1\leq i\leq l}$ by
\[
   \lambda^+ = \sum_{i=1}^l  d_i \alpha_i.
\] 
Note that $\lambda^+$ being regular implies that $d_i>0$ for all $i$.
Then, we can choose local coordinates $(x_1,\cdots,x_{\dim G})$ such that $\mathscr{D}_i=(x_i=0),1\leq i\leq l$, $\mathscr{L}:=L_0|_{\mathscr{X}}=\sum_{i=1}^l d_i \mathscr{D}_i$, and an gauge form on $G^{\mathrm{ad}}$ can be chosen as $$\omega = \frac{\diff x_1\wedge\cdots\wedge\diff x_{\dim G}}{\prod_{i=1}^l x_i^{m_i+1}}.$$ Setting $\mathscr{D}:=\sum_{i=1}^l (m_i+1)\mathscr{D}_i$ and pulling back the metric above to $\mathscr{X}$,
then we see that
\begin{equation*}
\begin{aligned}
     a= \max\left\{
    \frac{m_i}{d_i} \midd i=1,\cdots,l
    \right\},\quad
    b= \#\left\{ i=1,\cdots,l \midd a= \frac{m_i}{d_i}
    \right\}.
\end{aligned}
\end{equation*}
\end{example}

		\section{The affine linear sieve for $S$-integral points}\label{se:affinelinearsieve}
		
		The goal of this section is to establish the following version of the \emph{affine linear sieve} over for $S$-integral points over any number field, generalizing work of Nevo--Sarnak \cite[\S4]{N-S} (cf. also work of Gorodnik--Nevo \cite[\S1.3]{G-N}) and previous work of the first two authors \cite[\S3]{Cao-Huang1}.
		It serves as a crucial ingredient for executing the fibration method in proving \APSA\ off any single place.
		
		\begin{theorem}\label{thm:almostprime}
			Consider the following data:
			\begin{itemize}
				\item 	Let $G$ be a semisimple simply connected $k$-simple linear algebraic group over $k$ and let $\CG$ be the Zariski closure of $G$ in $\GL_{n,\CO}$;
				\item  Let $S$ be a finite set of places containing the archimedean ones, and $v_0\in S$ be such that $G(k_{v_0})$ is not compact;
				\item  Let $\mathfrak{D}\subset G(k_{S\setminus \{v_0\}})$ be a compact region with non-empty interior and piecewise smooth boundary;
				\item 
				Let $S'\supset S$ be a finite set of places and let $$\Phi:=\prod_{v\in S'\setminus S}\Phi_v\times \prod_{v\notin S'}\CG(\CO_v)$$ be a subgroup of $G(\RA_k^{S})$, where $\Phi_v\sbt G(k_{v})$ is an open compact subgroup for each $v\in S'\setminus S$;
				\item 	Let $f\in \CO_{S'}[\CG]$ be such that $f(\CG(\CO_v))\cap \CO_v^{\times}\neq\varnothing$ for any $v\in \Omega_{k}\setminus S'$. 
			\end{itemize}
			Then there exists an integer $r_0\geq 1$ depending only on $\CG,\mathfrak{D},\Phi,f,S,S^\prime$ and satisfying the following:
			
			For any $M>0$, there exists $g_0\in G(k)\cap \mathfrak{D}\cap \Phi$ 
			such that $f(g_0)$ is divisible by at most $r_0$ places not in $S'$, and the cardinalities of their residue fields are larger than $M$.
		\end{theorem}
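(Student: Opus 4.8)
The plan is to run a weighted sieve (the affine linear sieve in the style of Bourgain--Gamburd--Sarnak and Nevo--Sarnak) using the effective equidistribution of Theorem~\ref{thm:equidistcongruence} as the only analytic input. First I would set up the counting function. Fix $N$ large (to be chosen after $M$) and let $\mathcal{A}(N)$ be the multiset of values $f(g)$ as $g$ ranges over $G(k)\cap \mathcal{E}$, where $\mathcal{E}$ is the region $\mathfrak{D}\times (\text{height}\le N\text{ at }v_0)$ intersected with the fixed open compact $\Phi$ at the places of $S'\setminus S$ and with $\mathcal{G}(\mathcal{O}_v)$ elsewhere. For a prime $\mathfrak{p}\notin S'$ let $\beta_{\mathfrak{p}}$ count those $g$ with $f(g)\equiv 0\pmod{\mathfrak{p}}$; by Theorem~\ref{thm:equidistcongruence} applied with $\mathfrak{l}=\mathfrak{p}$ (or $\mathfrak{p}^2$ for the geometric-sieve-type input), one gets
\[
\#\mathcal{A}(N) = c\,\mathrm{m}_{X_S}(\BB_S(N)) + O_\varepsilon\!\big(\mathbf{N}(\mathfrak{p})^{\dim G+\kappa+\varepsilon}\,\mathrm{m}_{X_S}(\BB_S(N))^{1-\delta}\big)
\]
and $\beta_{\mathfrak{p}}=\big(\omega(\mathfrak{p})/\mathbf{N}(\mathfrak{p})\big)\#\mathcal{A}(N)+(\text{error of the same shape with an extra }\mathbf{N}(\mathfrak{p})\text{-factor})$, where $\omega(\mathfrak{p})$ is the number of $\mathbb{F}_{\mathfrak{p}}$-points of $(f=0)$ on $\mathcal{G}$ over $\mathbb{F}_{\mathfrak{p}}$ lying in the relevant coset mod $\Phi$; here the hypothesis $f(\mathcal{G}(\mathcal{O}_v))\cap\mathcal{O}_v^\times\neq\varnothing$ guarantees $\omega(\mathfrak{p})<\mathbf{N}(\mathfrak{p})$, so the local densities $1-\omega(\mathfrak{p})/\mathbf{N}(\mathfrak{p})$ are bounded away from $0$, and the Lang--Weil bound gives $\omega(\mathfrak{p})=O(\mathbf{N}(\mathfrak{p})^{\dim G-1})$ so the singular series converges and the ``sieve dimension'' is finite.

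Next I would feed this into a combinatorial sieve. Because the error terms in Theorem~\ref{thm:equidistcongruence} carry a power $\mathbf{N}(\mathfrak{l})^{\dim G+\kappa+\varepsilon}$, the sieve has a genuine \emph{level of distribution}: one can take the sifting parameter $z$ to be a small power $\mathrm{m}_{X_S}(\BB_S(N))^{\eta}$ of the main term (choosing $\eta$ so that $\mathbf{N}(\mathfrak{l})\le z$ forces the accumulated error $z^{\dim G+\kappa+\varepsilon}\,\mathrm{m}_{X_S}(\BB_S(N))^{1-\delta}$ to be $o$ of the main term, which is legitimate since $\mathrm{m}_{X_S}(\BB_S(N))\to\infty$ by the volume asymptotics of \S\ref{se:affquavol}--\S\ref{se:volasy}). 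Applying, e.g., the Brun or the $\beta$-sieve / Richert weighted sieve with this level and finite sieve dimension, one obtains for all large $N$ a constant $r_0=r_0(\mathcal{G},\mathfrak{D},\Phi,f,S,S')$ and at least one $g_0\in G(k)\cap\mathcal{E}$ such that $f(g_0)$ is divisible by at most $r_0$ primes outside $S'$ with $\mathbf{N}(\mathfrak{p})\le z$. Since $z\to\infty$ with $N$, choosing $N$ large enough that $z>M$ (and handling the finitely many bad primes in $S'$ by absorbing them into $r_0$, and noting $f(g_0)\neq 0$ on a Zariski-dense set so the count is nonzero) yields a $g_0$ all of whose prime divisors outside $S'$ have residue field of cardinality $>M$ — which is exactly the assertion, with the interior-of-$\mathfrak{D}$ hypothesis ensuring $\mathrm{m}_{X_S}(\BB_S(N))$ grows and that the region is nonempty.

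The main obstacle, and the point requiring care, is the bookkeeping that converts the \emph{polynomial-in-}$\mathbf{N}(\mathfrak{l})$ error of Theorem~\ref{thm:equidistcongruence} into a usable level of distribution: one must verify that when summing the error over all squarefree $\mathfrak{l}$ with $\mathbf{N}(\mathfrak{l})\le z$ (of which there are $O(z^{1+\varepsilon})$, each contributing $\mathbf{N}(\mathfrak{l})^{\dim G+\kappa+\varepsilon}\,\mathrm{m}_{X_S}(\BB_S(N))^{1-\delta}$), the total is still $o(\mathrm{m}_{X_S}(\BB_S(N)))$, which pins down the admissible exponent $\eta$ in $z=\mathrm{m}_{X_S}(\BB_S(N))^{\eta}$ in terms of $\dim G,\kappa,\delta$; this is routine but is where all the quantitative constants are consumed. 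A secondary point is checking that the local factors genuinely satisfy the hypotheses of the combinatorial sieve (nonnegativity, the dimension condition $\sum_{\mathbf{N}(\mathfrak{p})\le x}\omega(\mathfrak{p})\log\mathbf{N}(\mathfrak{p})/\mathbf{N}(\mathfrak{p}) = \kappa_1\log x+O(1)$ with $\kappa_1$ equal to the number of geometrically irreducible components of $(f=0)$ meeting $\mathcal{G}$), which follows from Lang--Weil together with the unit hypothesis on $f$; compared with \cite[\S3]{Cao-Huang1} the novelty is only that everything is done directly over $k$ and over the $S$-adic region $\mathfrak{D}\times B_{S_1}(N)$ rather than after Weil restriction, so no new sieve-theoretic difficulty arises once Theorem~\ref{thm:equidistcongruence} is in hand.
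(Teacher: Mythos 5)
Your overall structure matches the paper's: form the sequence $\CA(N)$ of ideals generated by the values $f(g)$ for $g\in G(k)\cap \mathfrak{D}\times B_{\{v_0\}}(N)\times\Gamma(\mathfrak{l}_0)$, feed the congruence-neighbourhood equidistribution of Theorem~\ref{thm:equidistcongruence} into the combinatorial sieve of Theorem~\ref{thm:sieve}, verify the sieve dimension via Lang--Weil, choose the sifting level $z$ a small power of the main term, and take $N$ large enough that $z>M$. The level-of-distribution bookkeeping you describe (balancing $\mathbf{N}(\mathfrak{l})^{\dim G+\kappa+\varepsilon}$ against $\rmm_{X_S}(\BB_S(N))^{1-\delta}$) is exactly the paper's computation \eqref{eq:leveldist}.

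There is, however, a genuine gap in the wrap-up. Brun's fundamental lemma produces a $g_0\in G(k)\cap\mathfrak{D}\cap\Phi$ such that $f(g_0)$ has \emph{no} prime divisors outside $S_0$ with $\mathbf{N}(\mathfrak{p})<z=N^\beta$ (here $S_0\supset S'$ is the enlarged exceptional set the paper needs to accommodate $\mathfrak{l}_0$). Once $N^\beta>M$ that gives the ``residue fields larger than $M$'' half of the conclusion, but it does \emph{not} bound the total number of prime divisors of $f(g_0)$ of norm $\geq z$ --- and that count is precisely what $r_0$ must control. Your phrase ``divisible by at most $r_0$ primes outside $S'$ with $\mathbf{N}(\mathfrak{p})\le z$'' conflates the two: the sieve gives \emph{zero} such primes below $z$, and what is unaccounted for are the primes above $z$. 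The missing ingredient, which the paper supplies explicitly, is a size bound on $f(g_0)$ via the product formula. Because $g_0$ lies in the compact set $\mathfrak{D}\times\prod_{v\in S'\setminus S}\Phi_v$ at the places $v\in S_0\setminus\{v_0\}$ and in the height ball at $v_0$, one has $|f(g_0)|_v\ll 1$ for $v\in S_0\setminus\{v_0\}$ and $|f(g_0)|_{v_0}\ll N^{\deg f}$; hence by the product formula $\prod_{v\notin S_0}|f(g_0)|_v\gg N^{-\deg f}$. Since each contributing prime outside $S_0$ has norm $\geq N^\beta$, their number is at most $\deg f/\beta+1$, and adding back the finitely many places of $S_0\setminus S'$ yields $r_0$. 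Without this product-formula step your argument produces a $g_0$ with no small prime divisors but no bound whatsoever on the count of large ones, so the conclusion of the theorem is not reached. (Invoking ``the Richert weighted sieve'' would not repair this either: weighted sieves control the number of prime factors \emph{below} the sieving level, whereas what is needed here is control on the factors \emph{above} it, which comes from the size of $f(g_0)$ and not from the sieve.)
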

	
	\subsection{A combinatorial sieve over general number fields}
		Let $k$ be a number field, and $S_0$ be a finite set of places of $k$ containing all the archimedean ones. 
		Let $\CI_{S_0}$ be the set of square-free ideals of $\CO_{S_0}$.
		For $\CP$ any set of prime ideals of $\CO_{S_0}$ and $z>0$, we use the notation $$\CP(z):=\prod_{\mathfrak{p}\in\CP:\mathbf{N}(\mathfrak{p})<z}\mathfrak{p}.$$
		Let $\CA=(a_{\mathfrak{b}})_{\mathfrak{b}\in I}$ be a sequence of non-negative real numbers indexed by a finite set $I$ of ideals of $\CO_{S_0}$. The \emph{sifting function} (with respect to $\CP$ and $z$) is
		\begin{equation}\label{eq:sifting}
			\CS(\CA,\CP,z):=\sum_{\substack{\mathfrak{b}\in I\\\gcd(\mathfrak{b},\CP(z))=1}}a_\mathfrak{b}.
		\end{equation} The cardinality of $\CA$ is defined as $$\#\CA:=\sum_{\mathfrak{b}\in I}a_{\mathfrak{b}}.$$ For every ideal $\mathfrak{d}\in\CI_{S_0}$, we define the subsequence $$\CA_{\mathfrak{d}}:=(a_\mathfrak{b})_{\mathfrak{b}:\mathfrak{d}\mid \mathfrak{b}}$$ of $\CA$. 
		
		We now state the following version of Brun's combinatorial sieve over the number field $k$.  See e.g. \cite[Theorem 6.9, Corollary 6.10]{Friedlander-Iwaniec} when $k=\BQ$ and \cite[Lemma 4]{Hinz} for a version over general number fields.
		\begin{theorem}[``Fundamental Lemma'' of the combinatorial sieve]\label{thm:sieve}
			With the notation above, let $\omega:\CI_{S_0}\to\BR_{\geq 0}$ be a multiplicative function satisfying: \begin{itemize}
				\item For every prime ideal $\mathfrak{p}$ of $\CO_{S_0}$, we have $0\leq\frac{\omega(\mathfrak{p})}{\mathbf{N}(\mathfrak{p})}<1$;
				\item There exist constants $\kappa_1>0, \kappa_2>1$ such that
				for any real numbers $2\leq w_1\leq w_2$, we have
				\begin{equation}\label{eq:sievedim}
					\prod_{\substack{w_1\leq \mathbf{N}(\mathfrak{p})\leq w_2}}\left(1-\frac{\omega(\mathfrak{p})}{\mathbf{N}(\mathfrak{p})}\right)^{-1}\leq \kappa_2\left(\frac{\log w_2}{\log w_1}\right)^{\kappa_1}.
				\end{equation}
			\end{itemize}
		Then there exist constants $\lambda,\tau>0$ depending only on $\kappa_1,\kappa_2$ such that, for any finite sequence of non-negative real numbers $\CA=(a_{\mathfrak{b}})_{\mathfrak{b}\in I}$, and $X>0,y\geq 2,z\in[2,y^\lambda]$, we have
		\begin{equation*}
			\CS(\CA,\CP,z)\geq \tau X\prod_{\mathfrak{p}\mid\CP(z)}\left(1-\frac{\omega(\mathfrak{p})}{\mathbf{N}(\mathfrak{p})}\right)+O\left(\sum_{\substack{\mathfrak{d}\in\CI_{S_0}\\\mathbf{N}(\mathfrak{d})\leq y,\mathfrak{d}\mid \CP(z)}}\left|\#\CA_{\mathfrak{d}}-\frac{\omega(\mathfrak{d})}{\mathbf{N}(\mathfrak{d})}X\right|\right),
		\end{equation*}
		where the implied constant depends only on $\kappa_1,\kappa_2$, and is uniform with respect to the sequence $\CA$, the set $\CP$ and the real numbers $X,y,z$.
		\end{theorem}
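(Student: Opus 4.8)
The plan is to reproduce the classical proof of the fundamental lemma of the combinatorial sieve (Brun's sieve), as in \cite[Chapter 6]{Friedlander-Iwaniec} for $k=\BQ$, with every step transported to the lattice of square-free ideals of $\CO_{S_0}$; since the general number field case is essentially \cite[Lemma 4]{Hinz}, the only extra bookkeeping is the finite set $S_0$, which merely discards finitely many primes and hence affects none of the constants.

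First I would fix an enumeration $\mathfrak{p}_1,\mathfrak{p}_2,\dots$ of the prime ideals of $\CO_{S_0}$ with $\mathbf{N}(\mathfrak{p}_1)\leq \mathbf{N}(\mathfrak{p}_2)\leq\cdots$ (ties broken arbitrarily). For $y\geq 2$ and $z\geq 2$ one then invokes the purely combinatorial construction of upper and lower Brun weights $\lambda^{\pm}=(\lambda^{\pm}_{\mathfrak{d}})_{\mathfrak{d}\in\CI_{S_0}}$, supported on ideals $\mathfrak{d}\mid\CP(z)$ with $\mathbf{N}(\mathfrak{d})\leq y$, satisfying $|\lambda^{\pm}_{\mathfrak{d}}|\leq 1$ and the fundamental inequalities
\[
\sum_{\mathfrak{d}\mid\mathfrak{n}}\lambda^{-}_{\mathfrak{d}}\;\leq\;\mathbf{1}_{\mathfrak{n}=(1)}\;\leq\;\sum_{\mathfrak{d}\mid\mathfrak{n}}\lambda^{+}_{\mathfrak{d}},\qquad \mathfrak{n}\mid\CP(z).
\]
This construction depends only on the ordering of primes and on a truncation parameter, not on the arithmetic of $k$. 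Applying the lower weight termwise to $\CA$ gives
\[
\CS(\CA,\CP,z)\;\geq\;\sum_{\mathfrak{d}\mid\CP(z)}\lambda^{-}_{\mathfrak{d}}\,\#\CA_{\mathfrak{d}}
\;=\;X\sum_{\mathfrak{d}\mid\CP(z)}\lambda^{-}_{\mathfrak{d}}\frac{\omega(\mathfrak{d})}{\mathbf{N}(\mathfrak{d})}
\;+\;\sum_{\substack{\mathfrak{d}\mid\CP(z)\\\mathbf{N}(\mathfrak{d})\leq y}}\lambda^{-}_{\mathfrak{d}}\Bigl(\#\CA_{\mathfrak{d}}-\frac{\omega(\mathfrak{d})}{\mathbf{N}(\mathfrak{d})}X\Bigr),
\]
and since $|\lambda^{-}_{\mathfrak{d}}|\leq 1$ the last sum is bounded in absolute value by the claimed remainder term. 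This disposes of the error term.

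The heart of the matter is the main term: one must produce $\lambda,\tau>0$ depending only on $\kappa_1,\kappa_2$ such that, for $z\in[2,y^{\lambda}]$,
\[
\sum_{\mathfrak{d}\mid\CP(z)}\lambda^{-}_{\mathfrak{d}}\frac{\omega(\mathfrak{d})}{\mathbf{N}(\mathfrak{d})}\;\geq\;\tau\prod_{\mathfrak{p}\mid\CP(z)}\Bigl(1-\frac{\omega(\mathfrak{p})}{\mathbf{N}(\mathfrak{p})}\Bigr).
\]
Write $V(z):=\prod_{\mathfrak{p}\mid\CP(z)}\bigl(1-\omega(\mathfrak{p})/\mathbf{N}(\mathfrak{p})\bigr)=\sum_{\mathfrak{d}\mid\CP(z)}\mu(\mathfrak{d})\omega(\mathfrak{d})/\mathbf{N}(\mathfrak{d})$, $\mu$ being the Möbius function on ideals. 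By the Brun truncation, the difference between the left-hand sum and the full alternating Euler product $V(z)$ is bounded by a sum over the ``excluded'' ideals, which one estimates by the usual Buchstab-type iteration, the dyadic ranges being those cut out by the ordering of prime norms; this uses nothing about $k$ beyond the sieve-dimension hypothesis \eqref{eq:sievedim} and the trivial bound $\omega(\mathfrak{p})/\mathbf{N}(\mathfrak{p})<1$. Choosing $\lambda$ small enough in terms of $\kappa_1,\kappa_2$ forces this difference to be at most $\tfrac12 V(z)$, whence the display holds with $\tau=\tfrac12$ (or whatever explicit constant the computation yields). Combining the three displays gives the stated lower bound.

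The only obstacle is expository: one must carry the Brun-weight construction and the Buchstab estimate of the main term through verbatim and check that ordering prime \emph{ideals} by norm, rather than prime \emph{numbers} by size, disturbs no inequality in \cite[\S6.2--6.3]{Friedlander-Iwaniec}. Crucially, because \eqref{eq:sievedim} is postulated rather than derived, no analytic input on the Dedekind zeta function of $k$ or on the distribution of prime ideals is needed here; that burden is deferred to the verification of \eqref{eq:sievedim} in applications. Hence it suffices to cite \cite[Lemma 4]{Hinz} (or repeat the proof of \cite[Theorem 6.9]{Friedlander-Iwaniec} with ideals in place of integers), observing that passing from $\CO$ to $\CO_{S_0}$ only removes the finitely many primes dividing $S_0$ and thus changes none of the relevant constants.
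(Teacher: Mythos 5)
Your proposal is correct and takes essentially the same approach as the paper: the paper does not give a proof at all, simply citing \cite[Theorem 6.9, Corollary 6.10]{Friedlander-Iwaniec} for $k=\BQ$ and \cite[Lemma 4]{Hinz} for general number fields, which is precisely what you conclude. Your sketch of the Brun-weight construction and the observation that ordering prime ideals by norm (and discarding the finitely many primes in $S_0$) disturbs none of the inequalities is a faithful outline of what those references do, and correctly identifies that no analytic input on the Dedekind zeta function is needed since \eqref{eq:sievedim} is assumed.
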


\subsection{Proof of Theorem \ref{thm:almostprime}}
	We keep using the notation in \S\ref{se:effective}. For any ideal $\mathfrak{l}$ of $\CO_{S}$, recall from \eqref{eq:gammal} that $\Gamma(\mathfrak{l})$ denotes a principal congruence subgroup.
	 We fix an ideal $\mathfrak{l}_0$ such that $\Gamma(\mathfrak{l}_0)\subset \Phi$. Let $$S_0:=S'\cup \{v:v\mid\mathfrak{l}_0\}.$$ We may also assume that $\CG$ is smooth over $\CO_{S_0}$ upon enlarging $S_0$. We write $\rmm_G^S$ (resp. $\rmm_G^{S_0}$) for the induced Tamagawa measure on $G(\RA_{k}^S)$ (resp. $G(\RA_{k}^{S_0})$).
	 
	 Let $N>1$ be sufficiently large. 
	 We let $I(N)$ be the collection of ideals of $\CO_{S_0}$ generated by values of $f$ on the (finite) set $G(k)\cap \mathfrak{D}\times B_{\{v_0\}}(N)\times\Gamma(\mathfrak{l}_0)$ and we form the finite sequence $\CA(N)=\{a_{\mathfrak{b}}\}_{\mathfrak{b}\in I(N)}$ where \begin{equation}\label{eq:seqab}
	 	a_{\mathfrak{b}}:=\#\{g\in G(k)\cap \mathfrak{D}\times B_{\{v_0\}}(N)\times \Gamma(\mathfrak{l}_0) :f(g)\CO_{S_0}=\mathfrak{b}\}.
	 \end{equation}
	 
	 For any ideal $\mathfrak{d}\in\CI_{S_0}$, we write  
	 $$\CE_{S_0}(\mathfrak{d}):=\prod_{v\notin S_0}\{g_v\in\CG(\CO_v):\operatorname{ord}_{v}(\mathfrak{d})\leq \ord_{v}(f(g_v))\}\subset G(\RA_k^{S_0}),$$ $$\CE_{S}(\mathfrak{d}):=\prod_{v\in S_0\setminus S}\CE_v^{\CG}(\operatorname{ord}_{v}(\mathfrak{l}_0);\operatorname{id}_{v})\times \CE_{S_0}(\mathfrak{d})\subset G(\RA_k^S),$$ where $\operatorname{id}_{v}$ denotes the identity element of $\CG(\CO_{v})$ and $\CE_v^{\CG}(\operatorname{ord}_{v}(\mathfrak{l}_0);\operatorname{id}_{v})$ is a $v$-adic congruence neighbourhood of $\operatorname{id}_{v}$ \eqref{eq:nuadiccong}. 
	  We decompose $\CE_{S}(\mathfrak{d})$ into a finite disjoint union of congruence neighbourhoods of level $\mathfrak{l}_0\mathfrak{d}$: $$\CE_S(\mathfrak{d})= \bigsqcup_{j\in J} \CE_S^{\CG}(\mathfrak{l}_0\mathfrak{d};\bxi_j),$$ the number of which is $\# J\leq \mathbf{N}(\mathfrak{l}_0\mathfrak{d})^{\dim G+\varepsilon}$ by the Lang--Weil estimate.
	 
	 We now apply Theorem \ref{thm:equidistcongruence} (note that $\boldsymbol{\delta}\equiv 1$ in our case) to each of these congruence neighbourhoods and get 
	 \begin{equation}\label{eq:CAN}
	 		 \begin{split}
	 		\#\CA(N)_{\mathfrak{d}}&=\#\{g\in G(k)\cap \mathfrak{D}\times B_{\{v_0\}}(N)\times \Gamma(\mathfrak{l}_0) :\mathfrak{d}\mid f(g)\}\\ &=\sum_{j\in J}\left(\rmm_{G}(\mathfrak{D}\times B_{\{v_0\}}(N)\times \CE_S^{\CG}(\mathfrak{l}_0\mathfrak{d};\bxi_j))+O_\varepsilon\left(\mathbf{N}(\mathfrak{d})^{\dim G+\kappa+\varepsilon}\rmm_{G_{v_0}}(B_{\{v_0\}}(N))^{1-\delta}\right)\right)\\ &=\rmm_{G_S}(\mathfrak{D}\times B_{\{v_0\}}(N))\rmm_{G}^{S}(\CE_S(\mathfrak{d}))+O_\varepsilon\left(\mathbf{N}(\mathfrak{d})^{2\dim G+\kappa+\varepsilon}\rmm_{G_{v_0}}(B_{\{v_0\}}(N))^{1-\delta}\right).
	 	\end{split}
	 \end{equation}
We write 
$$\iota_1:=\prod_{v\in S_0\setminus S}\rmm_{G_v}(\CE_v^{\CG}(\operatorname{ord}_{v}(\mathfrak{l}_0);\operatorname{id}_{v})),\quad \iota_2:=\rmm_{G}^{S_0}\left(\prod_{v\notin S_0}\CG(\CO_{v})\right).$$ The closed subset $V:=(f=0)\cap G$ has codimension one. Let $\CV$ be its Zariski closure in $\CG$. By the usual computation of Tamagawa measures (cf. e.g. \cite[Theorem 2.14]{Salberger}), we have
\begin{equation}\label{eq:mGS}
	\begin{split}
		\rmm_{G}^{S}(\CE_S(\mathfrak{d}))&=\iota_1 \rmm_{G}^{S_0}(\CE_{S_0}(\mathfrak{d}))\\ &=\iota_1 \prod_{\substack{v\notin S_0,v\nmid \mathfrak{d}}} \rmm_{G_{v}}(\CG(\CO_{v}))\times\prod_{\substack{v\notin S_0,v\mid\mathfrak{d}}}\rmm_{G_{v}}(\{g_{v}\in\CG(\CO_{v}):g_{v}\bmod \mathfrak{p}_{v} \in \CV\})\\ &=\iota_1\iota_2\prod_{\substack{v\notin S_0,v\mid\mathfrak{d}}}\frac{\rmm_{G_{v}}(\{g_{v}\in\CG(\CO_{v}):g_{v}\bmod \mathfrak{p}_{v} \in \CV\})}{\rmm_{G_{v}}(\CG(\CO_{v}))}\\ &=\iota_1\iota_2\prod_{\substack{v\notin S_0,v\mid\mathfrak{d}}}\frac{\#\CV(\BF_{\mathfrak{p}_{v}})}{\#\CG(\BF_{\mathfrak{p}_{v}})}.
	\end{split} 
\end{equation}
Let the multiplicative function $w:\CI_{S_0}\to\BR_{\geq 0}$ be defined by
$$\omega(\mathfrak{p}):=\mathbf{N}(\mathfrak{p})\frac{\#\CV(\BF_{\mathfrak{p}})}{\#\CG(\BF_{\mathfrak{p}})}$$ and let $$\boldsymbol{X}(N):=\iota_1\iota_2 \rmm_{G_S}(\mathfrak{D}\times B_{\{v_0\}}(N)).$$ Then our computations \eqref{eq:CAN} \eqref{eq:mGS} obtained so far show that for all $\mathfrak{d}\in\CI_{S_0}$,
\begin{equation}\label{eq:leveldist}
	\#\CA(N)_{\mathfrak{d}}-\frac{\omega(\mathfrak{d})}{\mathbf{N}(\mathfrak{d})}\boldsymbol{X}(N)=O_\varepsilon\left(\mathbf{N}(\mathfrak{d})^{2\dim G+\kappa+\varepsilon} \boldsymbol{X}(N)^{1-\delta}\right).
\end{equation}

We next proceed to verify the condition \eqref{eq:sievedim} on sieve dimension.  The Lang--Weil estimate (see e.g. \cite[Theorem 3.5]{Cao-Huang1}) applied to $\CG,\CV$ implies that there exist absolute constants $A_{1},A_{2}>0$ such that for any prime $\mathfrak{p}\notin S$,
 $$\#\CG(\BF_{\mathfrak{p}})\geq A_{1}\mathbf{N}(\mathfrak{p})^{\dim G},\quad \#\CV(\BF_{\mathfrak{p}})\leq A_{2}\mathbf{N}(\mathfrak{p})^{\dim G-1},$$ whence $$\frac{\omega(\mathfrak{p})}{\mathbf{N}(\mathfrak{p})}=\frac{\#\CV(\BF_{\mathfrak{p}})}{\#\CG(\BF_{\mathfrak{p}})}\leq \frac{\kappa_1}{\mathbf{N}(\mathfrak{p})},$$ where $\kappa_1>0$ is another absolute constant. 
So by Mertens' theorem (over general number fields), we obtain that for any $2\leq w_1\leq w_2$,
$$	\prod_{\substack{w_1\leq \mathbf{N}(\mathfrak{p})\leq w_2}}\left(1-\frac{\omega(\mathfrak{p})}{\mathbf{N}(\mathfrak{p})}\right)^{-1}\leq \prod_{\substack{w_1\leq \mathbf{N}(\mathfrak{p})\leq w_2}}\left(1-\frac{\kappa_1}{\mathbf{N}(\mathfrak{p})}\right)^{-1}\leq \kappa_2\left(\frac{\log w_2}{\log w_1}\right)^{\kappa_1},$$ for an appropriate absolute constant $\kappa_2>1$.

We are now in a position to apply Theorem \ref{thm:sieve}.  According to \eqref{eq:Svol} (cf. also \cite[Theorems 1.4 \& 9.1]{Benoist-Oh}), there exist $a\in\BQ_{>0},b\in\BZ_{\geq 0}$ such that $$\boldsymbol{X}(N)\asymp N^a(\log N)^b.$$ On taking $\CP$ to be the set of prime ideals of $\CO_{S_0}$, and $y=N^{\alpha},z=N^{\beta}$ for sufficiently small $\alpha,\beta>0$, so that, thanks to \eqref{eq:leveldist},
$$\sum_{\substack{\mathfrak{d}\in\CI_{S_0}\\\mathbf{N}(\mathfrak{d})\leq y,\mathfrak{d}\mid \CP(z)}}\left|	\#\CA(N)_{\mathfrak{d}}-\frac{\omega(\mathfrak{d})}{\mathbf{N}(\mathfrak{d})}\boldsymbol{X}(N)\right|=O(\boldsymbol{X}(N)^{1-\delta'}),$$ for a certain $0<\delta'=\delta'(\kappa,\alpha,\beta)<\delta$. On the other hand, Mertens' theorem also yields that $$\prod_{\mathfrak{p}\mid\CP(z)}\left(1-\frac{\omega(\mathfrak{p})}{\mathbf{N}(\mathfrak{p})}\right)\gg (\log z)^{-\kappa_1}\asymp (\log N)^{-\kappa_1}.$$ Then by Theorem \ref{thm:sieve}, imposing $\beta\leq \lambda\alpha$, we finally obtain
\begin{align*}
	\CS(\CA(N),\CP,z)\gg\frac{\boldsymbol{X}(N)}{(\log N)^{\kappa_1}}\gg_\varepsilon N^{a-\varepsilon}.
\end{align*}

We now arrive at the stage of concluding our proof. Let $M>0$ be fixed. Let $N$ be sufficiently large so that $N^\beta>M$. The by the definition of sifting function \eqref{eq:sifting} with respect to the sequence \eqref{eq:seqab}, there exists $g\in \Gamma(\mathfrak{l}_0)\subset \Phi$ such that  whenever $v\notin S_0$ divides $f(g)$ then $\mathbf{N}(\mathfrak{p}_{v})\geq N^\beta>M$.  Moreover, for all $v\in S_0\setminus\{v_0\}$, thanks to our imposed local conditions by $\mathfrak{D},\Phi$, we have $|f(g)|_{v}\ll 1$, while the height condition $\operatorname{Ht}_{v_0}(g)\leq N$ implies $|f(g)|_{v_0}\ll N^{\deg f}$. Now by the product formula, $\prod_{v\in\Omega_{k}}|f(g)|_v=1$. We then have $$\prod_{v\notin S_0} |f(g)|_v\gg |f(g)|_{v_0}^{-1}\gg N^{-\deg f},$$ from which we deduce that whenever $v\notin S_0$ divides $f(g)$ then $\mathbf{N}(\mathfrak{p}_{v})\leq N^{\deg f}$, for $N$ large enough. Hence the number of such $v$ is $\leq \deg f/\beta+1$. So we may take $r_0$ to be the largest integer $\leq  \deg f/\beta+1+\#(S_0\setminus S')$. \qed


\section{A criterion towards \APSA \ for general semisimple groups}\label{se:APSA}
The goal of this section is to prove the following technical result.

\begin{theorem}\label{thm:subtorus}
	Let $G$ be a semisimple simply connected linear algebraic group over a number field $k$.
	Let $D\subset G$ be a closed subset, $T\subset G$ be an anisotropic torus (as a closed subgroup) and $\pi: G\to Y:=G/T$ be the quotient map as a $T$-torsor.
	Assume the following:
	\begin{center}
	    $(*)$ For every $y\in Y$, the fibre $D_y:=D\cap \pi^{-1}(y)$ of $D$ is either empty or of $\dim(D_y)=0$,\\ and the generic fibre $D_{\eta_Y}$ is empty.

	\end{center}
	Let $v_0$ be a place of $k$. If $T(k_{v_0})$ is not compact, then the open subset $U:=G\setminus D$ satisfies strong approximation off $v_0$.
\end{theorem}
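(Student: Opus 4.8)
Here is the plan.

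\smallskip
\noindent\emph{Plan.}
The idea is to run the fibration method of \cite{Cao-Huang1} along the $T$-torsor $\pi\colon G\to Y$, using the affine linear sieve (Theorem~\ref{thm:almostprime}) to produce a rational point of $U$ with only very few places of bad reduction, and then to clean up those finitely many places by translating along the torus fibre. First I would make the standard reduction: it suffices to show that for every $(u_v)_{v\neq v_0}\in U(\RA_k^{v_0})$, every finite set of places $S_1\supseteq\infty_k$ with $v_0\notin S_1$, and every family of neighbourhoods $\Omega_v\ni u_v$ ($v\in S_1$), there is $g\in U(k)$ with $g\in\Omega_v$ for all $v\in S_1$. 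Enlarging $S_1$, fix smooth $\CO_{S_1}$-models $\CG,\CT,\CY,\CD$ of $G,T,Y,D$ with $\pi\colon\CG\to\CY$ a $\CT$-torsor, set $\CU:=\CG\setminus\CD$, and arrange $u_v\in\CU(\CO_v)$ for all $v\notin S_1\cup\{v_0\}$. Condition $(*)$ forces $D$ not to dominate $Y$, so $W:=\overline{\pi(D)}\subsetneq Y$ is a proper closed subset and $\dim D\le\dim W\le\dim Y-1=\dim G-\dim T-1\le\dim G-2$, using $\dim T\ge 1$ (as $T(k_{v_0})$ is non-compact); thus $\CD\subseteq\pi^{-1}(W)$, which is a closed subscheme of $\CG$ of codimension $\ge 1$ but needs only \emph{one} equation to be enclosed.

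\smallskip
\noindent\emph{Producing the point.}
Since $\pi^{-1}(W)\subsetneq G$ and $G$ is geometrically integral, there is a nonzero $f\in\CO_{S_1}[\CG]$ vanishing on $\pi^{-1}(W)$; after a further enlargement of $S_1$ one has $f(\CG(\CO_v))\cap\CO_v^{\times}\neq\varnothing$ for all $v\notin S_1\cup\{v_0\}$. Note $G(k_{v_0})\supseteq T(k_{v_0})$ is non-compact, so Theorem~\ref{thm:almostprime} applies with set of places $S=S'=S_1\cup\{v_0\}$, non-compact place $v_0$, compact region $\mathfrak D=\prod_{v\in S_1}\overline{\Omega_v}$ (after shrinking the $\Omega_v$ to be relatively compact with nonempty interior), subgroup $\Phi=\prod_{v\notin S_1\cup\{v_0\}}\CG(\CO_v)$, polynomial $f$, and a large threshold $M$. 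This yields $g_1\in G(k)$ with $g_1\in\Omega_v$ for $v\in S_1$, $g_1\in\CG(\CO_v)$ for $v\notin S_1\cup\{v_0\}$, and $f(g_1)$ divisible by at most $r_0$ places $\mathfrak q_1,\dots,\mathfrak q_r$ outside $S_1\cup\{v_0\}$, each with $\mathbf N(\mathfrak q_i)>M$. Because $f(g_1)\neq 0$ we get $g_1\notin\pi^{-1}(W)\supseteq D$, so $g_1\in U(k)$; and for every $v\notin S_1\cup\{v_0\}\cup\{\mathfrak q_1,\dots,\mathfrak q_r\}$ the condition $\mathfrak m_v\nmid f(g_1)$ forces $g_1\bmod\mathfrak m_v$ to avoid the reduction of $\CD$, i.e.\ $g_1\in\CU(\CO_v)$.

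\smallskip
\noindent\emph{Cleaning up and concluding.}
The only places (off $S_1\cup\{v_0\}$) at which $g_1$ may fail to lie in $\CU(\CO_v)$ are the $\mathfrak q_i$, and there $g_1\bmod\mathfrak m_{\mathfrak q_i}$ lies in the fibre of $\pi$ through it, which is a torsor under $\CT(\CO_{\mathfrak q_i}/\mathfrak m_{\mathfrak q_i})$ meeting the reduction of $\CD$ in at most $O(1)$ points by $(*)$; since $\mathbf N(\mathfrak q_i)>M$ is large, the bad residues form a proper, hence avoidable, subset. I would then replace $g_1$ by $g:=g_1t$ for a suitable $t\in T(k)$ chosen so that $t$ is close to $1$ at $S_1$, $t\equiv 1\pmod{\mathfrak m_v}$ for every $v\notin S_1\cup\{v_0\}\cup\{\mathfrak q_1,\dots,\mathfrak q_r\}$ (so that $g$ and $g_1$ have the same reduction there, keeping $g\in\CU(\CO_v)$), $t\bmod\mathfrak m_{\mathfrak q_i}$ avoids the forbidden subset for each $i$, and $t$ is unconstrained at $v_0$. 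Such a $t$ exists because $T(k_{v_0})$ is non-compact, so $T$ satisfies strong approximation off $v_0$ (modulo the Brauer--Manin obstruction): the prescribed adelic point differs from the identity section of $T$ only at the finitely many places $\mathfrak q_i$, where one still has freedom in choosing the residues, and one arranges that it lies in the Brauer--Manin locus. Then $g=g_1t\in U(k)$, it lies in $\CU(\CO_v)$ for all $v\notin S_1\cup\{v_0\}$, and $g\in\Omega_v$ for $v\in S_1$ (by the initial shrinking); this gives strong approximation off $v_0$ for $U$.

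\smallskip
\noindent\emph{Main obstacle.}
The delicate part is the last step: one must genuinely verify that \emph{strong} approximation off $v_0$ (not merely weak approximation at a finite set of places) is available for the anisotropic torus $T$ for the specific near-identity adelic point produced above, and that the residue conditions at the sieve-produced primes $\mathfrak q_i$ can be chosen compatibly with the Brauer--Manin pairing on $T$ — and, in the variant where one first descends through the base $Y=G/T$, with the pairing on $Y$ as well. Reconciling the almost-prime output of the sieve with this descent and Brauer--Manin bookkeeping on the torus fibres is exactly the content of the ``sophisticated fibration method'' of \cite{Cao-Huang1}, and it is where the bulk of the work lies; the auxiliary threshold $M$ is imposed precisely so that the bad primes $\mathfrak q_i$ have residue fields large enough for the fibrewise adjustment to have room to operate.
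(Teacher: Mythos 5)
Your overall framework---run the fibration through $\pi\colon G\to Y=G/T$, use the sieve to make an ``almost-prime'' value of a function vanishing on $\pi^{-1}(\overline{\pi(D)})$, then repair the finitely many bad primes by translating along $T(k)$---is the same as the paper's, and your reduction and sieve-application steps match the paper's Lemmas~\ref{le:step2} and \ref{le:stepIV}. The gap is in the ``cleaning up'' step, and it is not a gap you can patch with Brauer--Manin bookkeeping.

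You propose to find a single $t\in T(k)$ with $t$ near the identity at $S_1$, $t\equiv 1\pmod{\mathfrak m_v}$ at all $v\notin S_1\cup\{v_0\}\cup\{\mathfrak q_i\}$, and prescribed residue-avoidance at each $\mathfrak q_i$, citing ``strong approximation off $v_0$ for the anisotropic torus $T$ (modulo the Brauer--Manin obstruction).'' But tori essentially never satisfy strong approximation off a single place, and the qualifier ``modulo BM'' does not rescue the argument: even if the closure of $T(k)$ in $T(\RA_k^{v_0})$ is cut out by BM, one would have to verify that your target adelic point (identity away from the $\mathfrak q_i$, a chosen residue at each $\mathfrak q_i$) actually lies in that closure. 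That verification is precisely the thing you need to prove, not something you may ``arrange.'' Your observation that the bad residues form a small subset of $\CT(\BF_{\mathfrak q_i})$ is correct, but ``proper, hence avoidable'' is the whole difficulty: the constrained global $t\in T(k)$ need not hit the complement.

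The paper avoids this entirely by a pigeonhole argument. Lemma~\ref{lem:3dim-torus} shows $T(k)\cap\Phi_T$ is infinite, using only finite covolume of $T(\RA_k)/T(k)$ and non-compactness of $T(k_{v_0})$ --- no density or BM statement for $T$. Lemma~\ref{le:step3vi} shows that reduction modulo any prime with residue field larger than a threshold $M_r$ is injective on the first $rL+1$ elements of a fixed enumeration $(Q_i)$ of $T(k)\cap\Phi_T$. After the sieve (Lemma~\ref{le:stepIV}) produces $g_0$ with at most $r_0$ bad primes, each of residue-field size $>M_{r_0}$, one looks at the $r_0L+1$ translates $Q_i\cdot g_0 P_0$, $0\le i\le r_0L$. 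Each bad prime disqualifies at most $L$ of them (the fibre $\CD_y(\BF_{\mathfrak q})$ has at most $L$ points, and the $Q_i$'s reduce injectively), so at most $r_0L$ are disqualified and one good translate survives. This is why $M$ and the injectivity lemma appear: not to feed a strong-approximation statement for $T$, but to make the counting close. Your proposal, as written, replaces this elementary argument with a density claim for $T$ that is false in general, so it would not close the proof.
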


\subsection{Consequence on \APSA \ for $3$-dimensional groups and affine quadrics}
Taking Theorem \ref{thm:subtorus} on faith, we obtain the following theorem which generalizes  \cite[Theorem 4.1]{Cao-Huang1}.
\begin{theorem}\label{thm:3dim}
	Let $G$ be a three-dimensional semisimple simply connected linear algebraic group over a number field $k$.
     Let $v_0$ be a place of $k$.
	If $G(k_{v_0})$ is not compact, then $G$ satisfies \APSA \ off $v_0$.
\end{theorem}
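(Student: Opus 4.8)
To prove Theorem~\ref{thm:3dim}, the plan is to reduce \APSA\ off $v_0$ to Theorem~\ref{thm:subtorus} by exhibiting a suitable one-dimensional anisotropic subtorus. Fix an open $U\subseteq G$ whose complement $D:=G\setminus U$ has codimension $\ge 2$, so that $\dim D\le \dim G-2=1$. Since $G$ is three-dimensional, semisimple and simply connected, it is of type $A_1$, which has no outer forms, hence $G\cong\SL_1(Q)$ for a quaternion $k$-algebra $Q$ (with $Q\cong\mathrm{M}_2(k)$ when $G$ is split); the hypothesis that $G(k_{v_0})$ is not compact means precisely that $Q_{v_0}:=Q\otimes_k k_{v_0}$ is split. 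The one-dimensional tori of $G$ are exactly the maximal tori $R^1_{L/k}\mathbb{G}_m$ attached to the quadratic étale $k$-subalgebras $L\subseteq Q$. The aim is to choose $L$ a field, split at $v_0$, and embeddable in $Q$; the resulting $T:=R^1_{L/k}\mathbb{G}_m\subseteq G$ is then $k$-anisotropic with $T(k_{v_0})$ non-compact, and after a further generic adjustment it will satisfy hypothesis $(*)$ of Theorem~\ref{thm:subtorus} relative to $D$, which then yields strong approximation off $v_0$ for $U$.

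For the construction, write $L=k(\sqrt a)$ with $a\in k^\times$; the requirements are $a\in k_{v_0}^{\times 2}$ (automatic if $v_0$ is complex) and $a\notin k_v^{\times 2}$ for every finite or real place $v$ ramified in $Q$ --- finitely many places, none equal to $v_0$ since $Q_{v_0}$ is split. As square classes are open in $k_v^\times$, weak approximation in $k^\times$ provides such an $a$, and since $a\notin k^{\times 2}$ the algebra $L$ is a field. Then $T:=R^1_{L/k}\mathbb{G}_m$ is one-dimensional and, $L$ being a field, $k$-anisotropic; it embeds in $G=\SL_1(Q)$ via a $k$-algebra embedding $L\hookrightarrow Q$ (available because $L\otimes_k k_v$ is a field at every ramified place of $Q$); and $T(k_{v_0})\cong k_{v_0}^\times$ is non-compact as $v_0$ splits in $L$. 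To arrange in addition that $D$ contains no left coset of $T$, observe that over $\bar k$ the finitely many one-dimensional irreducible components of $D_{\bar k}$ that are left cosets of algebraic subgroups single out finitely many such subgroups $\bar T_1,\dots,\bar T_r$ of $G_{\bar k}$; and since a left coset determines its subgroup, it suffices to have $T_{\bar k}\notin\{\bar T_1,\dots,\bar T_r\}$. For each $j$ the locus $\{g\in G_{\bar k}\mid gT_{\bar k}g^{-1}=\bar T_j\}$ is empty or a single coset of $N_G(T)$, hence of dimension $\le\dim N_G(T)=\dim T=1$, so their union lies in a proper $k$-closed subset of $G$. Since $G$ is unirational, $G(k)$ is Zariski dense, so we may choose $g\in G(k)$ outside this subset and replace $T$ by $gTg^{-1}$: it remains one-dimensional, $k$-anisotropic, and with non-compact group of $k_{v_0}$-points (the conjugation takes place inside $G(k_{v_0})$), and is now distinct from every $\bar T_j$, so no left coset of $T_{\bar k}$ occurs among the geometric components of $D_{\bar k}$.

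It remains to verify $(*)$ and conclude. Consider $\pi\colon G\to Y:=G/T$, a $T$-torsor with $Y$ smooth and irreducible of dimension $2$, each fibre being a left coset of $T$. Since $\dim D\le 1<2=\dim Y$, the image $\pi(D)$ is not dense in $Y$, so the generic fibre $D_{\eta_Y}$ is empty. If $\dim D_y\ge 1$ for some $y\in Y$, then after base change to $\bar k$ some irreducible component of $D_{\bar k}$ is contracted by $\pi_{\bar k}$, hence is a one-dimensional closed subvariety of some geometric fibre of $\pi_{\bar k}$; as that fibre is an irreducible curve (a left coset of $T_{\bar k}$), the component equals it, contradicting the choice of $T$. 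Hence every $D_y$ is empty or zero-dimensional, so $(*)$ holds; as $T(k_{v_0})$ is non-compact, Theorem~\ref{thm:subtorus} shows that $U=G\setminus D$ satisfies strong approximation off $v_0$. Since $U$ was an arbitrary open subset with complement of codimension $\ge 2$, $G$ satisfies \APSA\ off $v_0$.

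The main difficulty lies in the second step: one must simultaneously meet the local conditions on $a$ that keep $T$ embedded in $G$ with $T(k_{v_0})$ non-compact, and make the ``direction'' of $T$ transverse to the finitely many coset-components of $D$. Each ingredient is standard --- weak approximation on square classes for the former, Zariski density of $G(k)$ for the latter --- but one should check with care that conjugating by $k$-rational elements preserves both anisotropy and non-compactness at $v_0$, and that the obstruction ``$D$ contains a full fibre of $\pi$'' is precisely the failure of hypothesis $(*)$ of Theorem~\ref{thm:subtorus}.
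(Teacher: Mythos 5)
Your proof is correct in substance and takes a genuinely different route from the paper's. The paper splits into the $k$-isotropic case (quoted from \cite[Theorem 1.4]{Cao-Huang1}) and the $k$-anisotropic case, and in the latter relies on two external inputs: the general structure theory from \cite[\S4.3.1 Step I]{Cao-Huang1} (to see that $G$ is $k$-simple with one-dimensional maximal tori and that a $k$-conjugate of $T$ satisfies $(*)$), and the torus-realization results of Platonov--Rapinchuk (\cite[Theorem 3.1]{PR} for isotropy over $k_{v_0}$, \cite[\S7.1, Corollary 3]{PR} to realize $T_0$ globally). You instead use the explicit classification $G\cong\SL_1(Q)$ for a quaternion algebra $Q$, construct $T=R^1_{L/k}\BG_m$ by a concrete weak-approximation argument on square classes, and handle $(*)$ by an elementary generic-conjugation argument (identifying the only obstruction as a geometric component of $D$ being a full $T_{\bar k}$-coset, and moving $T$ by a $k$-rational conjugation to avoid finitely many candidate subgroups). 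This is more self-contained, treats the isotropic and anisotropic cases on the same footing, and makes visible exactly where weak approximation and Zariski density of $G(k)$ enter; the paper's version is shorter because it black-boxes these steps via citations.

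One small point to tidy up: your choice of $a$ is only constrained to lie in $k_{v_0}^{\times 2}$ and outside $k_v^{\times 2}$ for the places $v$ ramified in $Q$; you then assert $a\notin k^{\times 2}$ so that $L$ is a field and $T$ is anisotropic. This is fine when $Q$ is ramified somewhere, but if $Q\cong\mathrm{M}_2(k)$ (i.e.\ $G$ split) there are no ramified places and your listed constraints do not rule out $a\in k^{\times 2}$. You should either add one auxiliary place $v_1\neq v_0$ and impose $a\notin k_{v_1}^{\times 2}$ there, or simply dispose of the split case first (as the paper does) since $\SL_2$ already contains an isotropic $\BG_m$ and Theorem~\ref{thm:subtorus} requires an \emph{anisotropic} torus anyway. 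Also note a harmless typo: the locus you write as $\{g\in G_{\bar k}\mid gT_{\bar k}g^{-1}=\bar T_j\}$ should be a coset of $N_{G_{\bar k}}(T_{0,\bar k})$ where $T_0$ is the torus being conjugated; the dimension bound $\le 1$ you use is correct for type $A_1$.
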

\begin{proof}
	The case where $G$ is isotropic is covered by \cite[Theorem 1.4]{Cao-Huang1}. Assuming $G$ is anisotropic and fixing a closed subset $D\subset G$ of codimension at least two in what follows, the first part of \cite[\S4.3.1 Step I]{Cao-Huang1} shows that $G$ is $k$-simple and all its maximal tori are one-dimensional.
	 By \cite[Theorem 3.1]{PR}, $G_{k_{v_0}}$ is isotropic, so it contains an one-dimensional isotropic torus $T_{0}\sbt G_{k_{v_0}}$ over $k_{v_0}$ and $T_0$ is maximal. By \cite[\S 7.1, Corollary 3]{PR}, there exists a maximal torus $T\subset G$ such that $T_{k_{v_0}}$ is conjugate to $T_{0}$ over $k_{v_0}$. 
	Thus $T(k_{v_0})$ is not compact. 
	Since $G$ is anisotropic, $T$ is also anisotropic.
	The second part of \cite[\S4.3.1 Step I]{Cao-Huang1} shows that, upon replacing $T$ by a certain $k$-conjugation if necessary, the quotient map $\pi: G\to Y:=G/T$ meets the requirement $(*)$ of Theorem \ref{thm:subtorus}. 
	From Theorem \ref{thm:subtorus} we deduce strong approximation off $v_0$ for the open set $G\setminus D$. \end{proof}

Granting Theorem \ref{thm:3dim}, the \APSA \ off $v_0$ property for the affine quadric $X$ \eqref{eq:affquadric} follows \emph{mutatis mutandis} on appealing to the arguments in \cite[\S5.1, \S5.2]{Cao-Huang1}. 
Indeed, for a brief sketch, by \cite[Theorem 1.3]{CLX19}, it suffices to establish \APSA\ off $v_0$ for the spin group $\operatorname{Spin}(q)$ and apply \cite[Theorem 2.1]{Cao-Huang1}. For this we need the following. 
\begin{lemma}
    Let $q$ be a non-degenerate quadratic form in $n$-variables with coefficients in $k$.
    Assume that the group $\operatorname{Spin}(q)$ is not compact over $v_0$. Then for any $2\leq n'<n$, it contains a spin subgroup of a non-degenerate quadratic form in $n'$-variables which is not compact over $v_0$.
\end{lemma}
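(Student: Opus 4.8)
For $n\geq 3$ the group $\operatorname{Spin}(q)$ is semisimple, and for $n=2$ it is a one-dimensional torus; in either case $\operatorname{Spin}(q)(k_{v_0})$ is non-compact if and only if $\operatorname{SO}(q)(k_{v_0})$ is, i.e. if and only if $q$ is isotropic over $k_{v_0}$, and the same dictionary holds for any non-degenerate form in $n'\geq 2$ variables. So the lemma reduces to producing a $k$-rational non-degenerate subspace $W\subset k^n$ of dimension $n'$ such that $q':=q|_W$ is isotropic over $k_{v_0}$. Indeed, once we have such a $W$, set $q'':=q|_{W^\perp}$, which is non-degenerate since $q$ and $q'$ are, so that $q=q'\perp q''$ over $k$. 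The inclusion of quadratic spaces $(W,q')\hookrightarrow(k^n,q)$ induces an embedding of Clifford algebras $C(q')\hookrightarrow C(q)$ (as $C(q)\cong C(q')\,\widehat{\otimes}\,C(q'')$) carrying $C^0(q')$ into $C^0(q)$; the resulting homomorphism $\operatorname{Spin}(q')\to\operatorname{Spin}(q)$ of algebraic $k$-groups has image acting on $k^n=W\oplus W^\perp$ as $\operatorname{SO}(q')\times\{1\}$, its kernel lies in $\ker(\operatorname{Spin}(q')\to\operatorname{SO}(q'))=\mu_2$, and the non-trivial element $-1$ there maps to $-1\in\operatorname{Spin}(q)$, so the homomorphism is a closed immersion. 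Thus $\operatorname{Spin}(q')$ is a spin subgroup of the required shape, and it is non-compact over $v_0$ because $q'$ is isotropic over $k_{v_0}$.

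\textbf{Finding the subspace $W$.} By hypothesis $q$ is isotropic over $k_{v_0}$, hence $q_{k_{v_0}}\cong \mathbb{H}\perp q_1$ with $\mathbb{H}$ a hyperbolic plane and $q_1$ non-degenerate in $n-2$ variables; diagonalising $q_1$ over $k_{v_0}$ and adjoining $n'-2$ of the coordinate lines to the hyperbolic plane yields (using $2\leq n'\leq n$) an $n'$-dimensional $k_{v_0}$-subspace $W_0\subset k_{v_0}^n$ with $q|_{W_0}$ non-degenerate and isotropic. Now consider the Grassmannian $\operatorname{Gr}(n',n)$ over $k$ together with
\[
\mathcal{U}:=\bigl\{\,W\in\operatorname{Gr}(n',n)(k_{v_0})\ :\ q|_{W}\ \text{is non-degenerate and isotropic over }k_{v_0}\,\bigr\},
\]
which is non-empty by the construction of $W_0$. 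One checks that $\mathcal{U}$ is open for the $v_0$-adic topology, and that $\operatorname{Gr}(n',n)(k)$ is $v_0$-adically dense in $\operatorname{Gr}(n',n)(k_{v_0})$: the Grassmannian is $k$-rational, being covered by affine charts isomorphic to $\mathbb{A}^{n'(n-n')}_k$, each $n'$-plane in $k_{v_0}^n$ lies in one such chart, and $k$ is dense in $k_{v_0}$. Hence $\mathcal{U}\cap\operatorname{Gr}(n',n)(k)\neq\varnothing$, which produces a $k$-rational $n'$-dimensional $W\subset k^n$ with $q|_W$ isotropic over $k_{v_0}$; moreover $q|_W$ is non-degenerate over $k$, since its discriminant lies in $k$ and is already non-zero over $k_{v_0}$. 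This $W$ is what we needed.

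\textbf{Where the work lies.} The one point deserving care is the openness of $\mathcal{U}$. This is where the distinction between $k$ and $k_{v_0}$ — hence the $v_0$-adic topology — really enters, and it follows from standard facts: non-degeneracy of $q|_W$ is a Zariski-open condition, while among non-degenerate forms over the local field $k_{v_0}$, isotropy is governed by locally constant invariants (the discriminant in $k_{v_0}^\times/(k_{v_0}^\times)^2$ together with the Hasse--Witt invariant when $n'\leq 4$, automatic isotropy when $n'\geq 5$ over a non-archimedean $k_{v_0}$ or when $n'\geq 2$ over $\mathbb{C}$, and the signature when $v_0$ is real), all of which vary continuously with $W$; alternatively, near $W_0$ the quadric $\{q|_W=0\}\subset\mathbb{P}(W)$ stays smooth and non-empty, so the submersion theorem over $k_{v_0}$ shows directly that the isotropy locus contains a neighbourhood of $W_0$. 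The Clifford-algebra bookkeeping in the first paragraph is routine, but it is worth recording the case $n'=2$ separately: there $\operatorname{Spin}(q')$ is the norm-one torus of the quadratic étale $k$-algebra $C^0(q')$, which becomes non-compact over $k_{v_0}$ exactly when that algebra splits there, i.e. when $-\det q'\in(k_{v_0}^\times)^2$, i.e. when $q'$ is isotropic over $k_{v_0}$ — in agreement with the dictionary used throughout.
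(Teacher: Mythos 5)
Your proof is correct and takes essentially the same route as the paper: both reduce to exhibiting a $k$-rational $n'$-plane $W\subset k^n$ with $q|_W$ non-degenerate and $k_{v_0}$-isotropic, both produce a $k_{v_0}$-witness $W_0$ by diagonalising $q$ over $k_{v_0}$ into a hyperbolic plane plus coordinate lines, both observe that the locus of such $W$ in $\operatorname{Gr}(n',n)(k_{v_0})$ is $v_0$-adically open (the paper cites O'Meara Theorem 63:20 for open-closedness of the isotropy locus, whereas you give an alternative argument via the submersion theorem and invariant theory), and both then invoke density of $\operatorname{Gr}(n',n)(k)$ in $\operatorname{Gr}(n',n)(k_{v_0})$ (weak approximation on a rational variety) to land on a $k$-point. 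Your first paragraph spells out the Clifford-algebra reason that $\operatorname{Spin}(q|_W)$ is a closed $k$-subgroup of $\operatorname{Spin}(q)$, a step the paper takes for granted, but this is a supplementary clarification rather than a different argument.
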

\begin{proof}
Let us consider the  Grassmannian variety $\operatorname{Gr}(n',n)$. For any field extension $K/k$ and $L\in \operatorname{Gr}(n',n)(K)$, viewed as a vector subspace $L\subset K^n$ of dimension $n'$, we denote the restriction of $q$ (over $K$) to $L$ by $q_L$. The non-degeneracy of such a $q_L$ specifies an Zariski-open set $U\subset\operatorname{Gr}(n',n)$ (this is equivalent to $\det(q_L)\neq 0$), which is non-empty because $q$ is non-degenerate.
Let us define $I_{v_0}= \{L\in U(k_{v_0}):q_L ~\text{is isotropic}\}$. We want to show that $I_{v_0}$ is non-empty and open-closed in $v_0$-adic topology. The open-closedness follows from \cite[Theorem 63:20]{OMeara}. To see that $I_{v_0}\neq\varnothing$, as $q$ is assumed to be $k_{v_0}$-isotropic, we can find a basis $\{e_1,\cdots,e_n\}$ of $k_{v_0}^n$ over which $q(x_1,\cdots,x_n)=x_1x_2+a_3x_3^2+\cdots+a_nx_n^2$, where $a_3,\cdots,a_n\in k_{v_0}^\times$. Then $L=\mathrm{Vect}_{k_{v_0}}(e_1,\cdots,e_{n'})\in I_{v_0}$. 
Now using the weak approximation property  at the place $v_0$ for the open subset $U$ of the rational variety $\operatorname{Gr}(n',n)$, we conclude that $I_{v_0}$ contains a $k$-rational vector subspace $L\subset k^n$ such that $q_L$ is isotropic over $k_{v_0}$.
\end{proof}
On taking $n'=3$, we conclude that the spin group $\operatorname{Spin}(q)$ contains a three-dimensional subgroup which is not compact over $v_0$.

\subsection{Proof of Theorem \ref{thm:subtorus}}
Our line of attack follows basically the \cite[\S4]{Cao-Huang1}, and we refer to \cite[\S4.1]{Cao-Huang1} for an overview. We choose to only outline the steps that work without modification, and give full details when extra inputs are necessary. 

We embed $G\sbt \GL_{n,k}$ and let $\CG$ be the Zariski closure of $G$ in $\GL_{n,\CO}$, $\CT$ (resp.~$\CD$) be the Zariski closure of $T$ (resp.~$D$) in $\CG$, and $\CU:=\CG\setminus \CD$. 
To prove that $U$ satisfies strong approximation off $v_0$, it suffices to fix a finite set of places $S\sbt \Omega_k$ containing $\infty_k\cup \{v_0\}$ and show that for $W\sbt U(\RA_k^{v_0})$ any open subset of form \begin{equation}\label{eq:W}
	W=\prod_{v\in S\setminus \{v_0\}}W_v\times \prod_{v\notin  S}\CU(\CO_v),
\end{equation}
where $W_v\sbt U(k_v)$ is a non-empty open subset for each $v\in S\setminus \{v_0\}$ and $W_v$ is compact if $v\notin \infty_k$, we have
\begin{equation}\label{eq:WU}
	W\cap U(k)\neq\varnothing.
\end{equation}

The image $\pi(D)\subset Y$ is a constructible set. Since the generic fibre $D_{\eta_Y}$ is empty, we have $\overline{\pi(D)}\subsetneq Y $, whence $\dim(\overline{\pi(D)})\leq \dim(Y)-1$.
We fix a non-constant regular function $F\in k[Y]$ such that $$\overline{\pi(D)}\subset E:=(F=0)\subset Y.$$
Let $$V:=Y\setminus E.$$ We have $\pi^{-1}(V)\sbt U$.
Then based on the assumption $(*)$, \cite[\S4.3.2 Step II]{Cao-Huang1} readily provides
\begin{lemma}\label{le:step2}
	Upon enlarging $S$, the following hold.
	\begin{enumerate}
		\item The quotient scheme $\CY:=\CG/\CT$ exists over $\CO_S$. Let $\CE$ be the Zariski closure of $E$ in $\CY$ and let $\CV:=\CY\setminus \CE$,  so that we have the following commutative diagram:
		\[\xymatrix{\pi^{-1}(\CV)\ar@{^{(}->}[r]\ar[d] &\CU\ar@{^{(}->}[r]\ar[rd]_{\pi|_{\CU}} & \CG\ar[d]^{\pi} & \CD\ar[d]\ar@{_{(}->}[l]\\
			\CV\ar@{^{(}->}[rr]&&\CY& \CE;\ar@{_{(}->}[l]
		}\]
		\item The regular function $F$ extends to  $\CO_{S}[\CY]$;
		\item For all $v\notin S\cup\infty_k$, the map $\pi|_{\CU}: \CU(\CO_v)\to \CY(\CO_v)$ is surjective, and $\CV(\CO_v)\neq\varnothing$;
		\item For all $y\in\CY_{\CO_S}$, the fibre $\CD_y:=\CD\cap\pi^{-1}(y)$ of $\CD$ is either empty or has dimension $0$, and \begin{equation}\label{eq:L}
			L:=\sup_{y\in \CY_{\CO_S}:\dim (\CD_y)=0}\deg (\CD_y)<\infty.
		\end{equation} 
	\end{enumerate}
\end{lemma}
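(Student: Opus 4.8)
The plan is to derive the whole lemma from a single spreading-out (limit) argument: each of the four assertions is a statement about the generic fibre over $k$ of a situation of finite presentation over $\Spec\CO$, so it propagates to all but finitely many primes, and ``enlarging $S$'' is exactly the operation of discarding the finitely many bad primes that arise at each stage. The reference \cite[\S4.3.2 Step II]{Cao-Huang1} carries this out in a closely related setting, and I would follow the same scheme.

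For (1): over $k$ the quotient $Y=G/T$ exists as a smooth affine $k$-variety ($G/T$ being affine since $T$ is reductive), and $\pi\colon G\to Y$ is a $T$-torsor, because $T$ acts freely on $G$. After enlarging $S$ so that $\CT$ is a torus over $\CO_S$ (smooth affine with geometrically connected fibres), the fppf quotient $\CY:=\CG/\CT$ exists as a scheme over $\CO_S$, $\pi\colon\CG\to\CY$ is a $\CT$-torsor, and the formation of $\CY$ commutes with base change, so $\CY\times_{\CO_S}k=Y$; enlarging $S$ further, $\CG$, $\CY$, $\CU$ become smooth over $\CO_S$. Set $\CE:=\overline{E}$ in $\CY$ and $\CV:=\CY\setminus\CE$. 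The relations $\overline{\pi(D)}\subset E$ and $D\subset\pi^{-1}(E)$ over $k$ pass to Zariski closures (the generic fibres being dense in $\CG$, $\CY$), giving $\pi(\CD)\subset\CE$ and $\CD\subset\pi^{-1}(\CE)$, hence $\pi^{-1}(\CV)\subset\CU$ and the asserted commutative diagram. For (2), $F\in\Gamma(Y,\CO_Y)=\varinjlim_S\CO_S[\CY_S]$ extends to $\CO_S[\CY]$ after one more enlargement of $S$, and, the generic fibre being schematically dense, one may arrange $\CE=(F=0)$ in $\CY$.

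I would prove (4) before (3). By Chevalley, the locus $Z\subset\CY$ over which the fibre of $\CD\to\CY$ has dimension $\ge1$ is constructible, and hypothesis $(*)$ says $Z\cap\CY_k=\varnothing$; hence the image of $Z$ in $\Spec\CO_S$ is a constructible set avoiding the generic point, i.e.\ finitely many closed points, which I absorb into $S$. After this every fibre $\CD_y$, $y\in\CY_{\CO_S}$, is empty or zero-dimensional, and the degrees of the nonempty ones are uniformly bounded (generic flatness plus Noetherian induction), giving $L<\infty$. For (3), fix $v\notin S\cup\infty_k$ and $y\in\CY(\CO_v)$. Since $\CT_{\BF_v}$ is connected, Lang's theorem gives $H^1(\BF_v,\CT_{\BF_v})=0$, and as $\CO_v$ is Henselian (and $\CT$ smooth) this forces $H^1(\CO_v,\CT)=0$, so the fibre $\CG_y$, a $\CT_{\CO_v}$-torsor, is $\CT_{\CO_v}$-isomorphic to $\CT_{\CO_v}$, in particular smooth over $\CO_v$ of relative dimension $d:=\dim T\ge1$ (one has $d\ge1$ because $T$ is anisotropic with $T(k_{v_0})$ noncompact). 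By (4), $\CD_y\subset\CG_y$ is empty or zero-dimensional of degree $\le L$, so $\CU_y:=\CG_y\setminus\CD_y$ is a dense open of $\CG_y$; after enlarging $S$ to exclude the finitely many primes of norm $\le L+1$, the special fibre $\CU_{y,\BF_v}$ is a nonempty open in the geometrically irreducible $\CT_{\BF_v}$ and thus has an $\BF_v$-point by Lang--Weil (or a direct torus point count), which Hensel's lemma lifts to $\CU_y(\CO_v)$. Hence $\CU(\CO_v)\to\CY(\CO_v)$ is onto. The same combination of smoothness, a point count and Hensel's lemma, applied to the nonempty open $\CV\subset\CY$ whose special fibres are nonempty opens in the smooth $\CY_{\BF_v}$, gives $\CV(\CO_v)\ne\varnothing$.

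The step I expect to be the main obstacle is the assertion in (1) that $\CY=\CG/\CT$ exists as an $\CO_S$-scheme, that $\CG\to\CY$ is a $\CT$-torsor, and that this is compatible with the base change $\CO_S\to k$. Over a field this is classical, but over $\CO_S$ it requires either the representability theorems for quotients by flat affine group schemes (Anantharaman, SGA 3) or a hands-on faithfully flat descent, and it is precisely here that one uses that $T$ --- hence $\CT$ after enlarging $S$ --- is a torus. Everything else is routine bookkeeping with constructible sets, Lang's theorem and Hensel's lemma.
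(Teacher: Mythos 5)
Your proof is correct, and since the paper simply delegates this lemma to \cite[\S4.3.2 Step II]{Cao-Huang1} rather than giving a proof, your reconstruction via spreading out, Chevalley's theorem, Lang's theorem and Hensel's lemma is exactly the standard argument the authors have in mind. One small remark worth flagging: the later use of this lemma (in the proof of Lemma~\ref{le:stepIV}) implicitly needs not only that $F$ extends to $\CO_S[\CY]$ but that $\CE = (F=0)$ as closed subschemes of $\CY$ after a further enlargement of $S$ (so that $\pi(P_v)\in\CV(\CO_v)$ really forces $F(\pi(P_v))\in\CO_v^\times$); you note this in passing, and it is the same spreading-out step, but it deserves to be made explicit since the lemma as stated is slightly weaker than what gets used.
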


Our next manipulation is similar to \cite[\S4.3.3 Step III]{Cao-Huang1}.
Let us consider
\begin{equation}\label{eq:W1}
	W_1:= \prod_{v\in S\setminus \{v_0\}}W_v\times \prod_{v\notin S}\CG(\CO_v)\subset G(\RA_{k}^{v_0}),
\end{equation} where $W_v$ are components of the set $W$ \eqref{eq:W}.
For each $v\in S\setminus (\infty_k\cup \{v_0\})$, by \cite[Lemma 4.2]{Cao-Huang1}, there exists an open compact subgroup $\Phi_v\sbt G(k_v)$ such that $\Phi_v\cdot W_v=W_v$. 
For each $v\in \infty_k\setminus \{v_0\}$, we consider the multiplication map
\begin{align*}
	m_v: G(k_v)\times G(k_v)\times G(k_v)&\to G(k_v)\\ (a,b,c)&\mapsto a\cdot b\cdot c.
\end{align*} Then for any $x_v\in W_v$, we obtain an open neighbourhood $m_v^{-1}(W_v)$ of $1_G\times 1_G\times x_v$. Hence there exist open neighbourhoods $\Phi_v$ of $1_G$ and $ W_{2,v}$ of $x_v$ such that $\Phi_v\times \Phi_v\times W_{2,v}\subset m_v^{-1}(W_v)$ (i.e. $\Phi_v\cdot \Phi_v\cdot W_{2,v}\subset W_v$).

Having equipped with all these, let us consider
\begin{equation}\label{eq:W2}
	W_2:= \prod_{v\in \infty_k\setminus \{v_0\}} W_{2,v}\times   \prod_{v\in S\setminus (\infty_k\cup \{v_0\})} W_v \times  \prod_{v\notin S}\CG(\CO_v)\subset G(\RA_{k}^{v_0}),
\end{equation}
$$\Phi_G:=\prod_{v\in S\setminus \{v_0\}}\Phi_v\times \prod_{v\notin S}\CG(\CO_v)\subset G(\RA_{k}^{v_0}),$$  and  
\begin{equation}\label{eq:PhiT}
	\Phi_T:=\prod_{v\in S\setminus \{v_0\}}(\Phi_v\cap T(k_v))\times \prod_{v\notin S} \CT(\CO_v) \subset  T(\RA_{k}^{v_0}).
\end{equation}
We then have 
\begin{equation}\label{eq:W1fix}
	1_T\in \Phi_T,\quad \Phi_G\cdot \Phi_G\cdot  W_2\subset W_1 \quad\text{and}\quad \Phi_T\cdot \Phi_G \cdot W_2\subset W_1.
\end{equation} 
We need the following lemma which generalizes \cite[\S4.5 Corollary 1]{PR}.
\begin{lemma}\label{lem:3dim-torus}
Let $\BT$ be an anisotropic torus over $k$ with identity element $1_{\BT}$. Let $v_1$ be a place such that $\BT(k_{v_1})$ is not compact. Then, for any open subset $1_{\BT}\in \Phi\subset \BT(\RA_k^{v_1})$, the set $\BT(k)\cap \Phi$ is infinite.
\end{lemma}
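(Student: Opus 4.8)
The plan is to use that a $k$-anisotropic torus carries a \emph{uniform} adelic lattice, and then to mine a divergent sequence in $\BT(k_{v_1})$ for infinitely many rational points whose components away from $v_1$ accumulate at the identity. For the first part I would recall that, $\BT$ being $k$-anisotropic, it has no nontrivial $k$-character, so $\BT(\RA_k)$ coincides with its norm-one subgroup, and hence the classical reduction theory of adele groups (see e.g. \cite[Theorem~5.5]{PR}) shows that the diagonally embedded $\BT(k)$ is a cocompact lattice in $\BT(\RA_k)$. Fix once and for all a compact set $K\subset \BT(\RA_k)$ with $\BT(k)\cdot K = \BT(\RA_k)$. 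Splitting off the place $v_1$, write $\RA_k = k_{v_1}\times \RA_k^{v_1}$, so that $\BT(\RA_k)=\BT(k_{v_1})\times \BT(\RA_k^{v_1})$; for $x\in\BT(\RA_k)$ let $x_{v_1}$ and $x^{v_1}$ denote its two components, and let $K_{v_1}$ and $K^{v_1}$ be the (compact) images of $K$ under the two projections.

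Next, since $\BT(k_{v_1})$ is noncompact and second countable (hence metrizable), I would pick a sequence $g_n\in\BT(k_{v_1})$ leaving every compact subset and write $(g_n,1_{\BT})=\gamma_n k_n$ with $\gamma_n\in\BT(k)$ and $k_n\in K$. Comparing $\RA_k^{v_1}$-components gives $(\gamma_n)^{v_1}=\big((k_n)^{v_1}\big)^{-1}\in (K^{v_1})^{-1}=:C$, a fixed compact subset of $\BT(\RA_k^{v_1})$; comparing $v_1$-components gives $(\gamma_n)_{v_1}\in g_n\,K_{v_1}^{-1}$, so as $g_n$ diverges the elements $(\gamma_n)_{v_1}$, and therefore the $\gamma_n$ themselves, take infinitely many distinct values (otherwise the $g_n$ would stay in a compact set). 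After relabelling we may assume the $\gamma_n$ are pairwise distinct.

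Finally, as $C$ is compact metrizable, I would pass to a subsequence along which $(\gamma_n)^{v_1}$ converges, hence is Cauchy in $\BT(\RA_k^{v_1})$; then continuity of $(a,b)\mapsto ab^{-1}$ together with the hypothesis $1_{\BT}\in\Phi$ yields an index $J$ such that $(\gamma_{n_j})^{v_1}\,\big((\gamma_{n_J})^{v_1}\big)^{-1}\in\Phi$ for all $j\geq J$. The elements $\gamma_{n_j}\gamma_{n_J}^{-1}\in\BT(k)$ with $j\geq J$ are then pairwise distinct, and their images in $\BT(\RA_k^{v_1})$ all lie in $\Phi$, so $\BT(k)\cap\Phi$ is infinite.

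The one ingredient beyond soft topology (compactness, pigeonhole, Cauchy subsequences) is the cocompactness of $\BT(k)$ in $\BT(\RA_k)$; this is the step to get right, and it relies on identifying $k$-anisotropy of $\BT$ with the vanishing of its group of $k$-characters, hence with $\BT(\RA_k)$ being equal to its norm-one subgroup so that the finiteness/compactness theorem applies. I also note that nothing in the last two steps uses that $\BT$ is a torus, so the same argument proves the analogous statement for any $k$-anisotropic reductive group.
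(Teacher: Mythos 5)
Your proof is correct. It takes a genuinely different route from the paper's: the paper invokes only that $\BT(\RA_k)/\BT(k)$ has \emph{finite invariant volume} (\cite[Theorem 5.6]{PR}), then shrinks $\Phi$ to a $\Phi_1$ with $\Phi_1^{-1}\Phi_1\subset\Phi$ and observes that $\BT(k_{v_1})\times\Phi_1$ has infinite Haar measure, so by a pigeonhole-on-measure argument infinitely many $t\in\BT(k)$ translate this set back to meet itself — and each such $t$ lies in $\Phi$. You instead upgrade to the \emph{cocompactness} of $\BT(k)$ in $\BT(\RA_k)$ (using anisotropy to identify $\BT(\RA_k)$ with its norm-one subgroup so that \cite[Theorem 5.5]{PR} applies), pick a divergent sequence $g_n\in\BT(k_{v_1})$, and extract $\gamma_n\in\BT(k)$ with $(\gamma_n)^{v_1}$ confined to a compact set; a Cauchy subsequence then delivers infinitely many elements of $\BT(k)\cap\Phi$ via continuity of $(a,b)\mapsto ab^{-1}$. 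Both arguments are sound. The trade-off: your compactness-and-subsequence argument avoids any measure theory and, as you note, applies verbatim to any $k$-anisotropic reductive group; the paper's measure-theoretic argument would also work in settings where the quotient merely has finite volume without being compact (not relevant for tori, but a slightly weaker hypothesis in principle). One small point worth flagging: for the Cauchy-subsequence step you implicitly use metrizability of $\BT(\RA_k^{v_1})$ — this is fine, since adelic points of an affine variety over a number field form a second-countable locally compact space, but you may want to note it or simply argue by nets/convergence without invoking the Cauchy property.
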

\begin{proof}
We consider the continuous map: \begin{align*}
	\phi: \BT(\RA_k^{v_1})\times \BT(\RA_k^{v_1})&\to \BT(\RA_k^{v_1})\\  (a,b)&\mapsto a^{-1}\cdot b.
\end{align*}
Then $\phi(1_{\BT},1_{\BT})=1_{\BT}$, hence there exists an open neighbourhood $\Phi_1\subset \BT(\RA_k^{v_1})$ of $1_{\BT}$ such that $\phi(\Phi_1,\Phi_1)\subset \Phi$.
We consider the quotient $\BT(\RA_k)\to \BT(\RA_k)/\BT(k)$.
By \cite[Theorem 5.6]{PR}, $\BT(\RA_k)/\BT(k)$ has finite invariant volume. 
Since $\BT(k_{v_1})$ is not compact, $\BT(k_{v_1})\times \Phi_1\subset \BT(\RA_k)$ has infinite invariant volume, hence there are infinitely many $t\in T(k)$ such that $ (\BT(k_{v_1})\times \Phi_1) \cdot t \cap (\BT(k_{v_1})\times \Phi_1)\neq \varnothing$.
It follows that any such $t$ belongs to $ \Phi$ since $\phi(\Phi_1,\Phi_1)\subset \Phi$. This proves infinitude of $\#(\BT(k)\cap \Phi)$.
\end{proof}
By assumption, $G$ is anisotropic and $T(k_{v_0})$ is not compact. Lemma \ref{lem:3dim-torus} implies that $T(k)\cap \Phi_T$ is infinite (and countable). We can then list its elements as a sequence
\begin{equation}\label{eq:Q}
	T(k)\cap \Phi_T=(Q_i)_{i\in \BN}.
\end{equation} 
For all $v\in \Omega_k\setminus S$, let us write $$\operatorname{Mod}_v=\operatorname{Mod}_{v,1}:\CG(\CO_{S})\to \CG(\BF_{\mathfrak{p}_{\nu}})$$ for the reduction modulo $v$ map. We recall from construction \eqref{eq:PhiT} that $T(k)\cap \Phi_T\subset \CT(\CO_{S})$. 
\begin{lemma}\label{le:step3vi}
	Let $r\in\BN_{\geq 1}$ be fixed. Then there exists $M_r>0$ such that, for any place $v\in \Omega_{k}\setminus (S\cup\infty_{k})$ with $\#\BF_{\mathfrak{p}_{\nu}}>M_r$, the restriction of the map $\operatorname{Mod}_v$ to the finite subsequence $(Q_i)_{0\leq i\leq rL}$ (where $L$ is defined by \eqref{eq:L}) is injective. 
\end{lemma}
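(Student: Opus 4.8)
The plan is to reduce the assertion to the elementary fact that a nonzero element of $\CO_S$ is divisible by only finitely many primes outside $S$, with norms bounded in terms of that element alone. First I would recall that, by \eqref{eq:Q} (and Lemma \ref{lem:3dim-torus}, which guarantees the list is infinite), the sequence $(Q_i)_{i\in\BN}$ enumerates the pairwise distinct elements of $T(k)\cap\Phi_T$. From the construction \eqref{eq:PhiT} one has $T(k)\cap\Phi_T\subset\CT(\CO_S)$, and since $\CT\subset\CG\subset\GL_{n,\CO}$, each $Q_i$ is an $n\times n$ matrix with entries in $\CO_S$. As $L<\infty$ by \eqref{eq:L}, the truncation $(Q_i)_{0\le i\le rL}$ is the finite list $Q_0,\dots,Q_{rL}$ of $rL+1$ distinct matrices over $\CO_S$.

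Next, for each pair $0\le i<j\le rL$ I would pick a matrix coordinate in which $Q_i$ and $Q_j$ differ, obtaining a nonzero element $a_{ij}\in\CO_S$. The key uniform bound is the following: if $0\ne a\in\CO_S$ and $v\in\Omega_k\setminus(S\cup\infty_k)$ is such that $\mathfrak p_v\mid a$, then $\operatorname{ord}_v(a)\ge1$, so $|a|_v\le\mathbf N(\mathfrak p_v)^{-1}$, and combining the product formula $\prod_{w\in\Omega_k}|a|_w=1$ with $|a|_w\le1$ for every finite $w\notin S$ gives $\mathbf N(\mathfrak p_v)\le|a|_v^{-1}=\prod_{w\ne v}|a|_w\le\prod_{w\in S}|a|_w=:C(a)<\infty$. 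One then sets $M_r:=\max_{0\le i<j\le rL}C(a_{ij})$, a finite quantity depending only on $r$ and on the fixed data $G,T,S,v_0,\Phi_T$ (through the $Q_i$), but not on $v$.

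Finally, I would conclude as follows: if $v\in\Omega_k\setminus(S\cup\infty_k)$ satisfies $\#\BF_{\mathfrak p_v}=\mathbf N(\mathfrak p_v)>M_r$, then $\mathfrak p_v\nmid a_{ij}$ for every pair $i<j$, so the chosen coordinate of $Q_i$ and $Q_j$ is already incongruent modulo $\mathfrak m_v$; since $\CG\hookrightarrow\GL_{n}$, equality of two points in $\CG(\BF_{\mathfrak p_v})$ would force equality of all matrix entries mod $\mathfrak m_v$, so $\operatorname{Mod}_v(Q_i)\ne\operatorname{Mod}_v(Q_j)$. Hence $\operatorname{Mod}_v$ is injective on $(Q_i)_{0\le i\le rL}$, as claimed. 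I do not expect any genuine obstacle here; the only point requiring care is the \emph{uniformity} — that a single $M_r$ works simultaneously for all admissible $v$ — which is precisely what the $v$-independent bound $C(a_{ij})$ furnished by the product formula achieves. This is the $\CO_S$-integral analogue of the corresponding step in \cite[\S4.3.3]{Cao-Huang1}.
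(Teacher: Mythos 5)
Your proof is correct and takes essentially the same approach as the paper. Where the paper asserts (citing \cite[\S4.3.2 (vi)]{Cao-Huang1}) that the set $B$ of places $v$ with $\operatorname{Mod}_v(Q_i)=\operatorname{Mod}_v(Q_j)$ for some pair $0\le i<j\le rL$ is ``clearly finite'' and then sets $M_r:=\max_{v\in B}\#\BF_{\mathfrak p_v}$, you simply unpack that finiteness: you choose a distinguishing matrix entry for each pair, form the nonzero difference $a_{ij}\in\CO_S$, and use the product formula to get an explicit $v$-independent bound on the norms of primes dividing $a_{ij}$. The conclusion is the same, and your version has the mild advantage of producing an explicit $M_r$.
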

\begin{proof}
	The proof is a minor modification of \cite[\S4.3.2 (vi)]{Cao-Huang1}. Indeed, 
	the set of places
	$$B:=\{v\in \Omega_{k}\setminus (S\cup\infty_{k}):\exists 0\leq i<j\leq rL, \operatorname{Mod}_v(Q_i)= \operatorname{Mod}_v(Q_j)\}$$ is clearly finite. To conclude it suffices to take $M_r:=\max_{v\in B}(\#\BF_{\mathfrak{p}_{\nu}})$.
\end{proof}

Next, making use of strong approximation off $v_0$ for $G$ regarding the adelic set $W_2$ \eqref{eq:W2}, we may fix $P_0\in G(k)\cap W_2$ and define the regular function $F_0\in k[G]$ by 
\begin{equation}\label{eq:F0}
	F_0(g):=(F\circ \pi) (g\cdot P_0).
\end{equation}
Note that $F_0$ extends to $\CO_{S}[\CG]$ by Lemma \ref{le:step2} (2). For any $x\in\CO_{S}$, we introduce the notation $$\Omega_S(x):=\{v\in \Omega_{k}\setminus (S\cup\infty_{k}):v\mid x\}.$$
Our goal now is to establish the sieving result.
\begin{lemma}\label{le:stepIV}
	There exist $r_0\in\BN$ and an element $g_0\in \Phi_G\cap \CG(\CO_{S})$ such that $\#\Omega_S(F_0(g_0))\leq r_0$ and for any $v\in \Omega_S(F_0(g_0))$, we have $\# \BF_{\mathfrak{p}_{\nu}}> M_{r_0}$, where the constant $M_{r_0}$ is given in Lemma \ref{le:step3vi}.
\end{lemma}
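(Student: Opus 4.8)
The plan is to deduce Lemma \ref{le:stepIV} directly from the affine linear sieve of Theorem \ref{thm:almostprime}, applied with $f:=F_0$. (We may assume $G$ is $k$-simple, as this holds in the only case where Theorem \ref{thm:subtorus} is invoked, namely the anisotropic case in the proof of Theorem \ref{thm:3dim}.) First I would repackage the available data into the form required by Theorem \ref{thm:almostprime}. Since $T\subset G$ and $T(k_{v_0})$ is not compact, $G(k_{v_0})$ is not compact, so $v_0$ is an admissible base place. Set $S':=S$ and $\Phi:=\prod_{v\notin S}\CG(\CO_v)\subset G(\RA_k^{S})$, so that $G(k)\cap\Phi=\CG(\CO_S)$. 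For the compact region take $\mathfrak{D}:=\prod_{v\in S\setminus\{v_0\}}\mathfrak{D}_v$, where $\mathfrak{D}_v:=\Phi_v$ for non-archimedean $v$ (an open compact subgroup, hence a compact region with non-empty interior and empty, a fortiori piecewise smooth, boundary) and $\mathfrak{D}_v\subset\Phi_v$ is a small closed ball around $1_G$ for $v\in\infty_k\setminus\{v_0\}$. Then $\mathfrak{D}\subset\prod_{v\in S\setminus\{v_0\}}\Phi_v$ is a compact region with non-empty interior and piecewise smooth boundary, and consequently $G(k)\cap\mathfrak{D}\cap\Phi\subset\Phi_G\cap\CG(\CO_S)$.

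The only hypothesis of Theorem \ref{thm:almostprime} requiring genuine verification is that $f=F_0$ lies in $\CO_S[\CG]$ and that $f(\CG(\CO_v))\cap\CO_v^\times\neq\varnothing$ for every $v\notin S$. Writing $R_{P_0}$ for right translation by $P_0$, we have $F_0=F\circ\pi\circ R_{P_0}$; since $P_0\in G(k)\cap W_2\subset\CG(\CO_S)$, the map $R_{P_0}$ is an automorphism of $\CG_{\CO_S}$, and $F$ extends to $\CO_S[\CY]$ by Lemma \ref{le:step2}(2), so $F_0\in\CO_S[\CG]$ (enlarging $S$ once more if necessary so that also $F$ does not vanish on any closed fibre of $\CY$). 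For $v\notin S$ one has $R_{P_0}(\CG(\CO_v))=\CG(\CO_v)$, and the surjectivity of $\pi|_{\CU}$ on $\CO_v$-points (Lemma \ref{le:step2}(3)) gives $\pi(\CG(\CO_v))\supset\pi(\CU(\CO_v))=\CY(\CO_v)\supset\CV(\CO_v)\neq\varnothing$. Since $\CV=\CY\setminus\CE$ with $\CE=\overline{(F=0)}$, any $y_v\in\CV(\CO_v)$ has reduction mod $\mathfrak{m}_v$ lying off $(F=0)$, hence $F(y_v)\in\CO_v^\times$; choosing $u_v\in\CU(\CO_v)$ with $\pi(u_v)=y_v$ and putting $g_v:=u_vP_0^{-1}\in\CG(\CO_v)$ gives $F_0(g_v)=F(y_v)\in\CO_v^\times$. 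This is precisely the point at which hypothesis $(*)$ enters: it is what guarantees $\overline{\pi(D)}\subsetneq Y$, hence the existence of a non-constant $F$ with $\CV$ of the required shape.

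With the hypotheses checked, Theorem \ref{thm:almostprime} produces an integer $r_0\geq1$ (depending only on $\CG,\mathfrak{D},\Phi,F_0,S$). Plugging this $r_0$ into Lemma \ref{le:step3vi} yields the constant $M_{r_0}>0$, and applying the ``for any $M$'' clause of Theorem \ref{thm:almostprime} with $M:=M_{r_0}$ then produces $g_0\in G(k)\cap\mathfrak{D}\cap\Phi\subset\Phi_G\cap\CG(\CO_S)$ such that $F_0(g_0)$ is divisible by at most $r_0$ places $v\notin S$, each with $\#\BF_{\mathfrak{p}_v}>M_{r_0}$; as $S\supset\infty_k$ this is exactly the statement that $\#\Omega_S(F_0(g_0))\leq r_0$ and $\#\BF_{\mathfrak{p}_v}>M_{r_0}$ for all $v\in\Omega_S(F_0(g_0))$. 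I expect the only real friction to be the middle paragraph — unwinding the definitions of $\pi$, $\CU$, $\CV$, $\CE$ and of $F_0$ to confirm that $F_0$ can be taken a $v$-adic unit at every good place, and keeping careful track of the finitely many auxiliary places that must be absorbed into $S$ — while the quantitative substance is entirely carried by Theorem \ref{thm:almostprime}.
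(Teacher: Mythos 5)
Your proof is correct and follows essentially the same approach as the paper: both deduce the lemma by applying Theorem \ref{thm:almostprime} with $f=F_0$, and both verify the key hypothesis $F_0(\CG(\CO_v))\cap\CO_v^\times\neq\varnothing$ for $v\notin S$ via the same mechanism (Lemma \ref{le:step2}(3) plus the torsor identity $F_0(g_v)=F(\pi(P_v))$ with $g_v=P_vP_0^{-1}$). The only divergence is a cosmetic repackaging of the parameters $(S,S',\mathfrak{D},\Phi)$ handed to Theorem \ref{thm:almostprime} — the paper takes the sieve's base place set to be $\infty_k\cup\{v_0\}$ and encodes the remaining non-archimedean conditions $\Phi_v$, $v\in S\setminus(\infty_k\cup\{v_0\})$, inside $\Phi$ via $S'\setminus S$, whereas you take $S'=S$ and absorb those $\Phi_v$'s into $\mathfrak{D}$ — which is an equally valid instantiation of the same theorem.
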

\begin{proof}
	This is direct analogue of \cite[\S4.3.4 (vii)]{Cao-Huang1}. Following the same argument in \cite[\S4.3.4 Step IV]{Cao-Huang1}, thanks to Lemma \ref{le:step2} (3), for all $v\in \Omega_{k}\setminus (S\cup\infty_{k})$ we may pick $P_v\in\CU(\CO_v)$ such that $\pi(P_v)\in \CV(\CO_v)$. Then the element $g_v:=P_v\cdot P_0^{-1}$ satisfies $F_0(g_v)=F(\pi(P_v))\in \CO_v^\times$, which implies $F_0(\CG(\CO_v))\cap \CO_v^{\times}\neq \varnothing$.
	Now for any $v\in \infty_k\setminus \{v_0\}$, we may fix a compact region $\mathfrak{D}_v$ with non-empty interior and piecewise smooth boundary such that $\mathfrak{D}_v\subset \Phi_v$, and let 
	$$\mathfrak{D}:=\prod_{v\in \infty_k\setminus \{v_0\}}\mathfrak{D}_v\quad \text{and}\quad  \Phi^{\infty}_G:=\prod_{v\in S\setminus (\infty_k\cup \{v_0\})}\Phi_v\times \prod_{v\notin S}\CG(\CO_v).$$ Note that  $\mathfrak{D}\times \Phi^{\infty}_G\subset \Phi_G$.
	Then the hypotheses of Theorem \ref{thm:almostprime} are all satisfied for $(\CG,\mathfrak{D},\Phi^{\infty}_G,f,\infty_k\cup \{v_0\},S)$ 
	(where the $S$ in Theorem \ref{thm:almostprime} is $\infty_k\cup \{v_0\}$ here and the $S'$ in Theorem \ref{thm:almostprime} is the $S$ here). \end{proof}
	
	We now arrive at the final stage of the proof and we adapt the argument of \cite[\S4.3.5 Step V]{Cao-Huang1}. Based on the sequence \eqref{eq:Q} and the pair $(r_0,g_0)$ obtained in Lemma \ref{le:stepIV}, we put $P_1:=g_0\cdot P_0$ and define the new finite sequence 
	$$\Theta:=(Q_i\cdot P_1)_{0\leq i\leq r_0 L}\subset \CG(\CO_{S}),$$ where $L$ is given by \eqref{eq:L}. Then $\Theta\subset G(k)\cap W_1$ thanks to \eqref{eq:W1fix}. Note that $\pi(Q_i\cdot P_1)=\pi(P_1)$ for all $i$ as the fibration $\pi$ is a $T$-torsor, and we write $y_1\in\CY(\CO_{S})$ for their common image, so that $$(F\circ \pi)(Q_i\cdot P_1)=F_0(g_0)=F(y_1),\quad 0\leq i\leq r_0 L.$$ We now consider the subsequence
	$$\Theta_{\operatorname{bad}}:=\{Q_i\cdot P_1,0\leq i\leq r_0 L: \exists v\in\Omega_S(F(y_1)),\operatorname{Mod}_v(Q_i\cdot P_1)\in\CD_{y_1}(\BF_{\mathfrak{p}_{\nu}})\}\subset\Theta.$$
		By Lemma \ref{le:step2}, for all $v\in \Omega_{k}\setminus (S\cup\infty_{k})$, we have $\#\CD_{y_1}(\BF_{\mathfrak{p}_{\nu}})\leq \deg\CD_{y_1}\leq L$; By Lemma \ref{le:stepIV}, we have $\#\Omega_S(F(y_1))\leq r_0$ and  for all  $v\in\Omega_S(F(y_1))$, $\#\BF_{\mathfrak{p}_{\nu}}>M_{r_0}$; Moreover, by Lemma \ref{le:step3vi},  for all  $v\in\Omega_S(F(y_1))$, the map $\operatorname{Mod}_v$ restricted to the sequence $\Theta$ stays injective. 
	Gathering together all these ingredients we can estimate
	$$\#\Theta_{\operatorname{bad}}\leq \#\Omega_S(F(y_1))\sup_{v\in\Omega_S(F(y_1))}\#\CD_{y_1}(\BF_{\mathfrak{p}_{\nu}})\leq r_0L<\#\Theta.$$
	Therefore, picking any $P_2\in \Theta\setminus\Theta_{\operatorname{bad}}$, we have that for all $v\in \Omega_{k}\setminus (S\cup\infty_{k})$, $\operatorname{Mod}_v(P_2)\in \CU_{y_1}(\BF_{\mathfrak{p}_{\nu}})$, and hence $$P_2\in W\cap G(k)=W\cap U(k),$$ achieving \eqref{eq:WU}. This finishes the proof of Theorem \ref{thm:subtorus}. \qed

\section{The geometric sieve for $p$-integral points on affine quadrics}\label{se:geomsieve}
Let us fix an integral model of our affine quadric $X$: $$\CX:(q=m)\subset \BA^n_{\BZ},$$ where $q$ is a non-degenerate integral quadratic form in $n$-variables, and $m$ is a non-zero integer. 
Assume throughout this section that either $n\geq 4$; or $n=3,-m\det q\neq\square$ and $q$ is $\BQ$-anisotropic.
Let us fix a prime number $p_0$, and define the $p_0$-adic height function on $\BA_{\BQ}^{n}$:
$$\|(x_1,\cdots,x_n)\|_{p_0}:=\max_{1\leq i\leq n}|x_i|_{p_0}.$$

Now let $f,g\in\BQ[\BA^n]$ be two coprime polynomials such that the closed subset $Z:=X\cap(f=g=0)\subset\BA^n$ has codimension two in $X$. We may assume that $f,g$ has integer coefficients. Let $\CZ$ be the Zariski closure of $Z$ in $\CX$. 
We fix $\CF\subset X(\BR)$ a compact neighbourhood with non-empty interior and piecewise smooth boundary. 
For $h\in\BN$, we write
$$\BB_{p_0}(h):=\{\bx\in X(\BQ_{p_0}):\|\bx\|_{p_0}\leq p_0^h\}.$$
With the effective equidistribution result (Theorem \ref{thm:equidistcongruence}) at hand, according the strategy towards \APHL\ developed in \cite[Theorem 3.1]{Cao-Huang2}, everything boils down to the following result on the geometric sieve analogous to \cite[Theorem 1.6]{Cao-Huang2}.

\begin{theorem}\label{thm:geosieve}
 Under the setting above, uniformly for $M>p_0,h\to\infty$, we have
	\begin{align*}
		&\#\left\{\bx\in \CX(\BZ\left[p_0^{-1}\right])\cap\CF\cap \BB_{p_0}(h):\exists p>M,\bx\bmod p\in \CZ(\BF_p) \right\}\\ &\ll p_0^{h(n-2)}\left(\frac{1}{\sqrt{\log (p_0^h)}}+\frac{1}{M\log M}\right).
	\end{align*}
\end{theorem}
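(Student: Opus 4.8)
The plan is to follow the strategy of \cite[Theorem~1.6]{Cao-Huang2}, replacing the archimedean lattice-point input there by the $S$-integral effective equidistribution of Theorem~\ref{thm:equidistcongruence} (applied with $G=\operatorname{Spin}(q)$, $H$ the stabiliser of a vector of norm $m$, $S_1=\{p_0\}$, $S_2=\{\infty\}$ and $\mathfrak{D}_{S_2}=\CF$; the locally constant function $\boldsymbol{\delta}$ occurring there is bounded, being $\equiv 1$ when $n\ge 4$). Write $T:=p_0^{h}$; by \eqref{eq:affquavol} the region $\CF\times\BB_{p_0}(h)$ has Tamagawa measure $\asymp T^{n-2}$. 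Since $M>p_0$, every prime $p>M$ is prime to $p_0$, so $\bx\bmod p$ is defined on our set, and a union bound reduces matters to three ranges: the small range $M<p\le z_1$, the intermediate range $z_1<p\le z_2$, and the large range $p>z_2$, where $z_1:=T^{\eta}$ for a small fixed $\eta>0$ (pinned down below in terms of $\delta,\kappa,\dim G,n$) and $z_2:=c_0T^{D}$ with $D:=\max(\deg f,\deg g)$ and $c_0$ a constant depending on $\CF,f,g$.

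For the small primes $M<p\le z_1$ I would apply Theorem~\ref{thm:equidistcongruence} with $\mathfrak{l}=(p)$ to each congruence neighbourhood $\CE_p^{\CX}(1;\xi)$, $\xi\in\CZ(\BF_p)$: the main term is $\asymp(\#\CZ(\BF_p)/\#\CX(\BF_p))\,T^{n-2}$ and the error is $O_\varepsilon(p^{\dim G+\kappa+\varepsilon}T^{(n-2)(1-\delta)})$. Using the Lang--Weil bounds $\#\CZ(\BF_p)\ll p^{n-3}$ (as $\dim Z=n-3$) and $\#\CX(\BF_p)\gg p^{n-1}$, summing over $\xi$ and then over $p$ gives a contribution $\ll\sum_{M<p}T^{n-2}p^{-2}+z_1^{\,n-2+\dim G+\kappa+\varepsilon}T^{(n-2)(1-\delta)}\ll T^{n-2}\big((M\log M)^{-1}+T^{-\eta'}\big)$ for some $\eta'>0$, provided $\eta$ is chosen with $(n-2)\delta>\eta(n-2+\dim G+\kappa)$; this is acceptable since $T^{-\eta'}\ll(\log T)^{-1/2}$.

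For the large primes $p>z_2$ I would argue elementarily: if $\bx$ in our set reduces into $\CZ$ modulo such a $p$, then $p\mid f(\bx)$ and $p\mid g(\bx)$ in $\BZ[p_0^{-1}]$; clearing the $p_0$-denominator of $\bx$, which is at most $p_0^{h}=T$ on $\BB_{p_0}(h)$, and using $|f(\bx)|,|g(\bx)|\ll 1$ on $\CF$, the integers $p_0^{hD}f(\bx)$ and $p_0^{hD}g(\bx)$ are $\ll c_0T^{D}<p$ and divisible by $p$, hence vanish, so $\bx\in Z(\BQ)$. Stratifying by the $p_0$-denominator exponent $e\le h$ and clearing denominators turns this into $\sum_{e\le h}\#\{\by\in(p_0^{e}Z)(\BZ):\|\by\|_\infty\ll p_0^{e}\}\ll\sum_{e\le h}p_0^{e(n-3)}\ll T^{n-3}$ (for $n\ge 4$; $\ll\log T$ when $n=3$), by a generic linear projection of the fixed variety $Z$ onto $\BA^{n-3}$; again this is $\ll T^{n-2}(\log T)^{-1/2}$.

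The hard part is the intermediate range $z_1<p\le z_2$: there the equidistribution error has grown too large once $p$ exceeds a small power of $T$, yet $p$ is not large enough to force $f(\bx)=g(\bx)=0$. Here the plan is to stratify again by the $p_0$-denominator exponent $e$, reducing to bounding — uniformly in $e$ and in $z_1<p\le z_2$ — the number of $\by\in\BZ^{n}$ with $q(\by)=mp_0^{2e}$, $\|\by\|_\infty\ll p_0^{e}$ and $\by\bmod p$ lying in the codimension-two subvariety $(f_e=g_e=0)$ of that quadric ($f_e,g_e$ the weighted homogenisations of $f,g$). This is the setting of the quadratic geometric sieve of \cite{Browning-HB,Cao-Huang2}, to be combined with a resultant/Bezout step splitting off the locus where a fixed resultant of $f,g$ vanishes (counted as in the large-prime step) and with Theorem~\ref{thm:equidistcongruence} once more supplying the uniformity needed at the lower end of the range; the per-level bound is $\ll p_0^{e(n-2)}\big((\log p_0^{e})^{-1/2}+(M\log M)^{-1}\big)$, and summing the geometric series over $e\le h$ the term $e=h$ dominates and yields $\ll T^{n-2}\big((\log T)^{-1/2}+(M\log M)^{-1}\big)$, the factor $(\log T)^{-1/2}$ being inherited from this step. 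Adding the three ranges proves the theorem; the decisive obstacle is establishing the intermediate-range count with all the stated uniformity — in the modulus $p$, in the value $mp_0^{2e}$ of the form, and in the box size $p_0^{e}$ — which is the technical heart of the argument.
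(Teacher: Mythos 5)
Your three-range decomposition by prime size matches the paper's strategy in outline, and your small-range treatment via Theorem~\ref{thm:equidistcongruence} combined with Lang--Weil is exactly Proposition~\ref{prop:Vbd1}. However, the proposal contains a genuine gap, which you yourself flag: the intermediate range $T^{\eta}<p\le c_0T^{D}$ is precisely where the substantive work of the paper happens, and you do not prove it. Concretely, the paper splits at $N_2=p_0^{\beta h}$ and at $p_0^{h}$: the interval $(p_0^{\beta h},p_0^{h}]$ is handled in Proposition~\ref{prop:Vbd2} by writing $\by=\blambda_h+p\bz$ and invoking the uniform lattice-point counts of \cite{Browning-Gorodnik,Cao-Huang2}, while the interval $(p_0^{h},\infty)$ is Proposition~\ref{prop:Vbd3}, which requires the Ekedahl/Browning--Heath-Brown geometric sieve together with the half-dimensional sieve and is the source of the $(\log p_0^{h})^{-1/2}$ factor. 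Your intermediate range straddles both of these and would have to be further subdivided at $p\asymp p_0^{h}$, since the change-of-variables lattice argument needs $p$ smaller than the box size and the sieve argument needs $p$ at least the box size; asserting a ``per-level bound $\ll p_0^{e(n-2)}\big((\log p_0^e)^{-1/2}+(M\log M)^{-1}\big)$'' without establishing it (and the $(M\log M)^{-1}$ term is in any case spurious there, since the primes in this range already exceed $T^{\eta}\gg M$) does not close the gap.

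Two secondary points. First, your elementary argument for $p>c_0T^{D}$ is correct and cleaner than the paper's, giving the stronger bound $O(T^{n-3})$ in that very large range; but it covers strictly less than the paper's $p>p_0^{h}$, so it does not replace Proposition~\ref{prop:Vbd3} — it only pushes the residual interval $(p_0^{h},c_0T^{D}]$ back onto the unproved intermediate step. Second, the stratification by the exact $p_0$-denominator exponent $e\le h$ is unnecessary: the paper simply observes that the constraint $\|\bx\|_{p_0}\le p_0^{h}$ is equivalent to $\by:=p_0^{h}\bx\in\BZ^{n}$, which places all the $p_0$-integral points on the single fixed quadric $q=p_0^{2h}m$ with $|\by|\ll p_0^{h}$, avoiding the sum over $e$ and the associated uniformity bookkeeping. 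To turn your proposal into a proof you would need to supply the analogues of Propositions~\ref{prop:Vbd2} and \ref{prop:Vbd3} with the stated uniformity in the level $p_0^{2h}m$, which is exactly the part you identify as ``the technical heart'' and leave open.
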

The rest of this section is devoted to the proof of Theorem \ref{thm:geosieve}, for which we proceed along a similar way as in \cite[\S4]{Cao-Huang2} and only sketch the main steps.

\subsection*{Proof of Theorem \ref{thm:geosieve}}
For every $p_0<N_1<N_2\leq \infty$, we want to estimate
the truncated counting function $$\BV(N_1,N_2):=\#\{\bx\in \CX(\BZ\left[p_0^{-1}\right])\cap \CF\cap \BB_{p_0}(h):\exists N_1<p\leq N_2,\bx\bmod p \in\CZ(\BF_p)\}.$$ 
The count in Theorem \ref{thm:geosieve} equals $\BV(M,\infty)$. For every prime $p>p_0$ and $\blambda_p\in\CX(\BF_p)$ we consider the congruence neighbourhood
$$\CE^{\CX}_{p_0}(p;\blambda_p):=\{\bx_p\in\CX(\BZ_p):\bx_p\equiv \blambda_p\bmod p\}\times\prod_{p'\neq p, p_0}\CX(\BZ_{p'}).$$ Then $\bx\in \CX(\BZ\left[p_0^{-1}\right]),\bx\bmod p \in\CZ(\BF_p)$ if and only if there exists $\blambda_p\in\CZ(\BF_p)$ such that $\bx\in X(\BQ)\cap \CE^{\CX}_{p_0}(p;\blambda_p)$.
Our first alternative bound is similar to \cite[Theoreme 4.1]{Cao-Huang2}.
\begin{proposition}\label{prop:Vbd1}
	There exist $\delta>0,\kappa'>0$ such that uniformly for all $p_0< N_1<N_2<\infty$, \begin{equation}\label{eq:bdV1}
		\BV(N_1,N_2)\ll \frac{p_0^{h(n-2)}}{N_1\log N_1}+N_2^{\kappa'}p_0^{h(n-2)(1-\delta)}.
	\end{equation}
\end{proposition}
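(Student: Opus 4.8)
The plan is to split $\BV(N_1,N_2)$ according to the size of the prime $p$ detecting the congruence condition, separating a ``main range'' of primes $p\leq T$ (for a threshold $T$ to be optimized) from a ``tail range'' $T<p\leq N_2$, and handling the two ranges by different techniques.

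For the main range $N_1<p\leq T$, I would use Theorem \ref{thm:equidistcongruence} (the effective equidistribution with congruence conditions, applied to the spin group acting on $X$, with $S=\{\infty,p_0\}$) to estimate, for each fixed prime $p$ in this range and each $\blambda_p\in\CZ(\BF_p)$, the count $\#(X(\BQ)\cap\CF\cap\BB_{p_0}(h)\cap\CE^{\CX}_{p_0}(p;\blambda_p))$. This gives a main term of size $\mathbf{N}((p))^{-1}\cdot\rmm_{X}(\CF\times\BB_{p_0}(h))\cdot(\text{local density at }p)$ plus an error $O_\varepsilon(p^{\dim G+\kappa+\varepsilon}p_0^{h(n-2)(1-\delta)})$, using the volume asymptotic $\rmm_{X}(\BB_{p_0}(h))\asymp p_0^{h(n-2)}$ from \eqref{eq:affquavol}. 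Summing the main terms over $\blambda_p\in\CZ(\BF_p)$ (there are $O(p^{\dim Z})=O(p^{n-3})$ such points by Lang--Weil, since $Z$ has codimension two in $X$, so dimension $n-3$) and over $N_1<p\leq T$, the local density contribution at each $p$ is $O(p^{-2})$ times the full local measure, so the main term sums to $O(p_0^{h(n-2)}\sum_{p>N_1}p^{-2})=O(p_0^{h(n-2)}/(N_1\log N_1))$ by partial summation and the prime number theorem. The accumulated error term is $O_\varepsilon(T^{\dim G+\kappa+1+\varepsilon}p_0^{h(n-2)(1-\delta)})$.

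For the tail range $T<p\leq N_2$, equidistribution is too weak (the error dominates), so instead I would use the trivial observation that if $\bx\in\CX(\BZ[p_0^{-1}])\cap\BB_{p_0}(h)$ has $\bx\bmod p\in\CZ(\BF_p)$ for some $p>T$, then in particular $f(\bx)\equiv g(\bx)\equiv 0\pmod p$; writing $\bx=\bm{y}/p_0^{k}$ in lowest terms, the integers $f,g$ evaluated at the numerator vector are bounded polynomially in $p_0^h$, so the number of admissible $\bx$ with a large prime divisor of $\gcd$ is controlled by a lattice-point / divisor argument exactly as in \cite[\S4]{Cao-Huang2}: the count here is $O(p_0^{h(n-2)}\cdot T^{-1+\varepsilon})$ or similar, which one absorbs after optimization. (Alternatively one can run the elementary ``second moment'' bound of Ekedahl type directly on the integral model.) Combining the two ranges and choosing $T=p_0^{h\eta}$ for a small enough $\eta>0$ so that $T^{\dim G+\kappa+1}p_0^{-h(n-2)\delta}\ll p_0^{-h(n-2)\delta/2}$, both error contributions are $\ll p_0^{h(n-2)(1-\delta')}$ for some $\delta'>0$, and we obtain \eqref{eq:bdV1} with $\kappa'$ determined by $\dim G$, $\kappa$ and $\eta$.

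The main obstacle I anticipate is bookkeeping the dependence of all implied constants on the parameters so that they are genuinely uniform in $N_1,N_2,h$ (and in $\blambda_p$), and in particular verifying that the volume/local-density computation at the auxiliary primes $p$ behaves uniformly — this is where \eqref{eq:affquavol}, the Lang--Weil bounds for $\CZ$ and $\CX$, and the uniformity built into Theorem \ref{thm:equidistcongruence} must be combined carefully; the tail estimate itself is routine given \cite[\S4]{Cao-Huang2}.
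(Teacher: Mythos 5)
Your ``main range'' computation is exactly the paper's proof: apply Theorem \ref{thm:equidistcongruence} to each congruence neighbourhood $\CE^{\CX}_{p_0}(p;\blambda_p)$, sum over $\blambda_p\in\CZ(\BF_p)$ using the Lang--Weil bound $\#\CZ(\BF_p)\ll p^{n-3}$, then sum over $p$. The main term gives $\ll p_0^{h(n-2)}\sum_{p>N_1}p^{-2}\ll p_0^{h(n-2)}/(N_1\log N_1)$, and the error gives $\ll\sum_{N_1<p\le N_2} p^{n-3}\cdot p^{\kappa+n(n-1)/2}\cdot p_0^{h(n-2)(1-\delta)} \ll N_2^{\kappa'}p_0^{h(n-2)(1-\delta)}$ with $\kappa'=\kappa+\tfrac{n(n-1)}{2}+n-2$. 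That is the whole proof, with no truncation parameter $T$.

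The extra ``tail range'' $T<p\le N_2$ that you bolt on is both unnecessary and problematic. It is unnecessary because the claimed bound carries the factor $N_2^{\kappa'}$ explicitly: the accumulated equidistribution error is \emph{allowed} to grow polynomially in $N_2$, so there is nothing to optimize inside this proposition. The choice of the threshold between the equidistribution regime and the elementary regimes is made once and for all at the very end of the proof of Theorem \ref{thm:geosieve} (one takes $N_1=M$ and $N_2=p_0^{\beta h}$ with $\beta\kappa'<\delta$), not inside Proposition \ref{prop:Vbd1}. It is problematic because the elementary bound $O(p_0^{h(n-2)}\cdot T^{-1+\varepsilon})$ you propose for the tail is not justified and, in fact, cannot be had by a routine Ekedahl/divisor argument on this variety: the count lives on the affine quadric rather than on affine space, and controlling the contribution of large auxiliary primes there is precisely the content of Propositions \ref{prop:Vbd2} and \ref{prop:Vbd3}, which require uniform lattice-point counting on quadrics (for $p\le p_0^h$) and a projection followed by the half-dimensional sieve (for $p\ge p_0^h$), the latter producing only a factor $(\log p_0^h)^{-1/2}$ rather than a power saving in $T$. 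Folding those regimes back into Proposition \ref{prop:Vbd1} would both obscure the modular structure of the argument and rest on an estimate that does not hold as stated.
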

\begin{proof}[Proof of Proposition \ref{prop:Vbd1}]
	By Theorem \ref{thm:equidistcongruence} about the effective equidistribution of $p_0$-integral points, we obtain that uniformly for such $\blambda_p$,
\begin{align*}
	\#\left(X(\BQ)\cap \CF\times \BB_{p_0}(h)\times \CE^{\CX}_{p_0}(p;\blambda_p)\right)&=\int_{\CF\times \BB_{p_0}(h)\times \CE^{\CX}_{p_0}(p;\blambda_p)}\boldsymbol{\delta}\operatorname{d}\rmm_X+O(p^{\kappa+\frac{n(n-1)}{2}}\rmm_{X_{p_0}}(\BB_{p_0}(h))^{1-\delta})\\ &\ll \frac{\rmm_{X_{p_0}}(\BB_{p_0}(h))}{p^{n-1}}+p^{\kappa+\frac{n(n-1)}{2}}\rmm_{X_{p_0}}(\BB_{p_0}(h))^{1-\delta}.
\end{align*}

By the Lang--Weil estimate applied to $\CZ$ we have \begin{equation}\label{eq:LWforZ}
	\#\CZ(\BF_p)\ll p^{n-3}
\end{equation} uniformly for all $p$. Appealing to \eqref{eq:affquavol} and summing over all residues $\blambda_p\in\CZ(\BF_p)$, we thus obtain the bound \eqref{eq:bdV1}, with $\kappa'=\kappa+\frac{n(n-1)}{2}+n-2$.
\end{proof}
 
 Our next alternative bound is analogous to \cite[Theorem 4.2]{Browning-Gorodnik}.
Before stating it, we make the following elementary observation. Let $\bx\in (\BZ[p_0^{-1}])^n$. Then the height condition $\|\bx\|_{p_0}\leq p_0^h$ is equivalent to $\by:=p_0^h \bx\in\BZ^n$.  Hence under this height condition, $\bx\in \CX(\BZ[p_0^{-1}])$ if and only if $\by$ is an $\BZ$-point of the affine quadric $$\CX_{h}:q=p_0^{2h}m,$$ and the condition $\bx\in\CE^{\CX}_{p_0}(p;\blambda_p)$ is equivalent to $\by\equiv p_0^{h}\blambda_p\bmod p$ in $\CX_{h}(\BF_p)$.
 We note that $\bx\in\CF$ implies that the archimedean norm $|\bx|$ of $\bx$ is bounded (depending on the fixed $\CF$), which in term implies that $|\by|\ll p_0^h$.  
 
\begin{proposition}\label{prop:Vbd2}
	For any $0<\alpha<1$, there exists $0<\delta(\alpha)<1$ such that  we have 
$$\BV(p_0^{\alpha h},p_0^{h})\ll \frac{p_0^{h(n-2)}}{\log (p_0^h)}+p_0^{h(n-2)(1-\delta(\alpha))}$$
\end{proposition}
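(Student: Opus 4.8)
\medskip
\noindent\textbf{Plan of proof.} The plan is to pass, via the scaling $\by=p_0^h\bx$ recorded above, to counting integer points on the rescaled quadric $\CX_h:q=p_0^{2h}m$ that lie in the dilated region $p_0^h\CF$ (of diameter $\ll p_0^h=:B$) and reduce into $\CZ_h(\BF_p)$ for some prime $p\in(p_0^{\alpha h},p_0^h]$; here $\CZ_h\subset\CX_h$ is the scaled model of $Z$, cut out by $\hat f_h(\by):=p_0^{h\deg f}f(p_0^{-h}\by)$ and $\hat g_h(\by):=p_0^{h\deg g}g(p_0^{-h}\by)$, a closed subscheme of codimension two. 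Because $p\neq p_0$, the reductions of $\hat f_h,\hat g_h$ modulo $p$ have the same leading forms as $f,g$; I would first record that consequently all the geometric estimates used below — Lang--Weil, yielding $\#\CZ_h(\BF_p)\ll p^{n-3}$, and dimension-growth bounds for the auxiliary subvarieties — hold with implied constants independent of $h$. This uniformity is what lets one import the archimedean geometric sieve of \cite[Theorem~4.2]{Browning-Gorodnik} and \cite[\S4]{Cao-Huang2} to the moving family $\CX_h$, with $B$ in the role of the height.

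Next I would split the prime range. For primes very close to $B$, say $p\in(B/A,B]$ with $A$ a large constant, the trivial bound suffices: each residue class modulo $p$ meets $\CX_h(\BZ)\cap p_0^h\CF$ in $\ll(B/p+1)^n\ll_A 1$ points, there are $\ll p^{n-3}$ classes in $\CZ_h(\BF_p)$, and $\ll B/\log B$ such primes, so this range contributes $\ll_A B^{n-3}\cdot(B/\log B)=B^{n-2}/\log B$. Separately, the $\by$ for which $\hat f_h(\by)\hat g_h(\by)=0$ lie on one of the $(n-2)$-dimensional subvarieties $\CX_h\cap(\hat f_h=0)$, $\CX_h\cap(\hat g_h=0)$; by the codimension-two hypothesis on $Z$ neither contains a component of $\CX_h$, and neither contains a linear subspace of dimension $n-2$ — for $n\geq 4$ because a nondegenerate quadric of dimension $n-1$ has Witt index $<n-2$, and for $n=3$ because $\BQ$-anisotropy of $q$ makes every plane section of $X$ an anisotropic conic — so by dimension growth they carry $\ll B^{(n-2)(1-\delta_1)}$ integral points in $p_0^h\CF$ for some $\delta_1>0$, which bounds their contribution.

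For the remaining ``bulk'' primes $p\in(p_0^{\alpha h},B/A]$ and $\by$ with $\hat f_h(\by)\hat g_h(\by)\neq 0$, I would run the geometric sieve proper. Invoking effective equidistribution modulo $p$ of the integral points of $\CX_h$ in $p_0^h\CF$ — Theorem~\ref{thm:equidistcongruence} when $p$ is a sufficiently small power of $B$, and the quadratic exponential sum ($\delta$-method) estimates for the quadric $\CX_h$ of \cite{Browning-Gorodnik} in the complementary subrange — together with $\#\CZ_h(\BF_p)\ll p^{n-3}$, the number reducing into $\CZ_h(\BF_p)$ is bounded, per prime, by a main term $\ll B^{n-2}/p^2$ plus an error that, summed over $p\in(p_0^{\alpha h},B/A]$, is $\ll B^{(n-2)(1-\delta)}$ for some $\delta=\delta(\alpha)>0$. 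Since $\sum_{p>p_0^{\alpha h}}p^{-2}\ll B^{-\alpha}/\log B$, the main terms contribute $\ll B^{n-2-\alpha}/\log B$; adding the three ranges yields the stated bound, with $\delta(\alpha)$ the least of the exponents produced.

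The step I expect to be the main obstacle is the bulk range: establishing equidistribution modulo $p$ of the integral points on the scaled quadric, with a power-saving error, uniformly for $p$ ranging up to a fixed small power of $B$ below $B$ — the quadratic-exponential-sum core of \cite[Theorem~4.2]{Browning-Gorodnik}, which for $n=3$ additionally leans on the $\BQ$-anisotropy of $q$ — together with verifying, as indicated, that the geometry of $\CX_h$ and $\CZ_h$ modulo $p\neq p_0$ does not degenerate as $h$ grows, so that all implied constants remain uniform in $h$.
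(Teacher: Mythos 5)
Your overall framing — pass to $\CX_h$, keep implied constants uniform in $h$, sum a per-prime bound over $p\in(p_0^{\alpha h},p_0^h]$ — matches the paper in outline, but the engine that makes the paper's argument work is absent from your proposal, and the pieces you substitute for it do not close the gap.

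The paper's proof of Proposition \ref{prop:Vbd2} rests on a single \emph{uniform per-congruence-class} bound, Eq.~\eqref{eq:bdp1}: for every prime $p\neq p_0$ with $1\ll p\leq p_0^h$ and every $\blambda_p\in\CX(\BF_p)$,
\[
\#\bigl(X(\BQ)\cap \CF\times \BB_{p_0}(h)\times \CE^{\CX}_{p_0}(p;\blambda_p)\bigr)
\ll_\varepsilon \left(\tfrac{p_0^h}{p}\right)^{n-3+\varepsilon}\left(1+\tfrac{p_0^h}{p^{\frac{n+1}{n}}}\right),
\]
which, multiplied by $\#\CZ(\BF_p)\ll p^{n-3}$ and summed, gives the statement. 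This bound is obtained by the change of variables $\by=\blambda_h+p\bz$: the quadric condition $q(\by)=p_0^{2h}m$ becomes $\bz\cdot\nabla q(\blambda_h)+pq(\bz)=0$ with $|\bz|\ll p_0^h/p$, forcing $\bz$ into a sublattice of determinant $\asymp p$, after which the uniform point-counting of Browning--Gorodnik (for $n\geq 4$) resp.\ \cite[\S4.2.2]{Cao-Huang2} (for $n=3$ anisotropic) is applied. You do not identify this change of variables, which is the heart of the argument.

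What you propose instead has two genuine problems. First, in your ``bulk'' range $p\in(p_0^{\alpha h},B/A]$ you invoke Theorem~\ref{thm:equidistcongruence}; but its error term grows like $p^{\dim G+\kappa+\varepsilon}\cdot B^{(n-2)(1-\delta)}$ per residue class, so for $p$ comparable to any fixed positive power of $B$, let alone $p\asymp B$, the error already dwarfs the trivial count $B^{n-2}$. This is precisely why the paper confines effective equidistribution to the small-modulus range (Proposition~\ref{prop:Vbd1}) and handles the present range by the different, lattice-based argument; your appeal to ``the $\delta$-method for $\CX_h$'' in the complementary subrange is not an identified technique and does not substitute for that concrete step. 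Second, your claimed bound $\ll B^{(n-2)(1-\delta_1)}$ for $\BZ$-points in a box of side $B$ on $\CX_h\cap(\hat f_h=0)$ — a variety of dimension $n-2$ — is stronger than what dimension-growth results give. Absence of an $(n-2)$-dimensional linear subspace is exactly the hypothesis that yields $\ll_\varepsilon B^{n-2+\varepsilon}$, not a power saving below $B^{n-2}$; a power saving would require a genuinely finer analysis (and for $f$ nonlinear is not automatic), and the present proposition does not need it. In fact the split on $\hat f_h\hat g_h=0$ has no role here: it belongs to Proposition~\ref{prop:Vbd3}, where the primes are $\geq p_0^h$ and one projects to $\BA^{n-2}$; importing it into the $p\leq p_0^h$ range is a structural confusion. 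Your treatment of the top range $p\in(B/A,B]$ is fine, but it is not where the difficulty lies.
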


\begin{proof}[Proof of Proposition \ref{prop:Vbd2}]
		We shall prove that, uniformly for any prime $1\ll p\leq p_0^{h}$ different from $p_0$, any $\blambda_p\in\CX(\BF_p)$, we have
	\begin{equation}\label{eq:bdp1}
		\#\left(X(\BQ)\cap \CF\times \BB_{p_0}(h)\times \CE^{\CX}_{p_0}(p;\blambda_p)\right)\ll_\varepsilon \left(\frac{p_0^h}{p}\right)^{n-3+\varepsilon}\left(1+\frac{p_0^h}{p^{\frac{n+1}{n}}}\right).
	\end{equation}
Taking \eqref{eq:bdp1} for granted, it is straightforward to deduce the desired bound using the Lang--Weil estimate for $\CZ$ \eqref{eq:LWforZ}: 	\begin{align*}
	\BV(p_0^{\alpha h},p_0^{h})&\leq p_0^{h(n-3)} \sum_{p_0^{\alpha h}<p\leq p_0^{h}}\left(\left(\frac{p_0^h}{p}\right)^\varepsilon+\frac{p_0^{h(n-2+\varepsilon)}}{p^{1+\frac{1}{n}+\varepsilon}}\right)\ll \frac{p_0^{h(n-2)}}{\log (p_0^h)}+p_0^{h(n-2)(1-\frac{\alpha}{n}+\varepsilon)}.
\end{align*}

Based on the observation above, the estimation of \eqref{eq:bdp1} boils then down to 
$$\#\{\by\in\CX_{h}(\BZ):|\by|\ll p_0^h,\by\equiv p_0^h\blambda_p\bmod p\}.$$
Either the set to be counted is empty, or it contains  an integral point $\blambda_h\in \CX_h(\BZ)$ satisfying $\blambda_h\equiv p_0^h\blambda_p\bmod p$. Executing the change of variables $\by=\blambda_h+p\bz$, the new variable $\bz$ satisfies
$$|\bz|\ll\frac{p_0^h}{p},\quad \bz\cdot\nabla q(\blambda_h)+pq(\bz)=0.$$ In particular $\bz$ satisfies $\bz\cdot\nabla q(\blambda_h)\equiv \bz\cdot \nabla q(p_0^h\blambda_p)\equiv 0\bmod p$, which is also equivalent to the lattice condition $\bz\cdot\nabla q(\blambda_p)\equiv 0\bmod p$.
The argument in \cite[p.1076--1077]{Browning-Gorodnik} shows that the determinant of this lattice is  $\asymp p$ for $1\ll p\neq p_0$. 
The uniform point counting technique as in \cite[p. 1076--1078]{Browning-Gorodnik} (for $n=4$) and in \cite[\S4.2.2]{Cao-Huang2} (for $n=3,-m\det q\neq\square$ and $q$ anisotropic) can be accordingly adapted and yields \eqref{eq:bdp1}. The details will not be repeated.
\end{proof}

Our final bound deals with the primes with $p\geq p_0^h$, analogous to \cite[Theorem 4.7]{Cao-Huang2}.
\begin{proposition}\label{prop:Vbd3}
	We have
	\begin{equation}\label{eq:bV3}
		\BV(p_0^{h},\infty)=O\left(\frac{p_0^{h(n-2)}}{(\log p_0^h)^\frac{1}{2}}\right).
	\end{equation}
\end{proposition}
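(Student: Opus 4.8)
The plan is to handle the large-prime range $p \geq p_0^h$ by a direct sieve argument that does not rely on equidistribution (which is useless here since the error term $p^{\kappa'}$ swamps the main term), following the strategy of \cite[Theorem 4.7]{Cao-Huang2}. First I would observe that if $\bx \in \CX(\BZ[p_0^{-1}]) \cap \CF \cap \BB_{p_0}(h)$ and $\bx \bmod p \in \CZ(\BF_p)$ for some prime $p > p_0^h$, then writing $\by := p_0^h \bx \in \BZ^n$ as in the observation preceding Proposition \ref{prop:Vbd2}, we have $|\by| \ll p_0^h$ and $\by \in \CX_h(\BZ)$, where $\CX_h: q = p_0^{2h} m$. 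The condition $\bx \bmod p \in \CZ(\BF_p)$ translates into $f(\by) \equiv g(\by) \equiv 0 \bmod p$ (after clearing $p_0$-powers, harmless since $p \neq p_0$), i.e.\ $p \mid \gcd(f(\by), g(\by))$ as integers, unless both $f(\by)$ and $g(\by)$ vanish identically.

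The key dichotomy is then: either $f(\by) = g(\by) = 0$, in which case $\by$ is an honest integral point on the codimension-two subvariety $\CZ_h$ (the model of $Z$ scaled by $p_0^h$); or at least one of $f(\by), g(\by)$ is a nonzero integer, and since it is divisible by a prime $p > p_0^h$ while being of size $\ll p_0^{hC}$ for some constant $C = C(f,g)$ depending only on the degrees, it can be divisible by at most $C$ such primes. Counting integral points on $\CZ_h$ of height $\ll p_0^h$: since $\CZ$ has dimension $n-3$, the expected count is $\ll p_0^{h(n-3)}$ by a standard dimension-growth / Schmidt-type bound (or by the Lang--Weil estimate over $\BZ/p\BZ$ combined with an elementary covering argument), which is of smaller order than the claimed $p_0^{h(n-2)}(\log p_0^h)^{-1/2}$. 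So the main contribution is from $\by$ with $f(\by)$ (say) a nonzero multiple of some large prime; here I would fix $\by$ and sum over the (boundedly many) large primes dividing $f(\by)$, reducing to the count of \emph{all} $\by \in \CX_h(\BZ)$ with $|\by| \ll p_0^h$ and $f(\by) \neq 0$, which is $\ll p_0^{h(n-2)}$ by \eqref{eq:affquavol}. This alone only gives $\ll p_0^{h(n-2)}$, not the saving; to extract the $(\log p_0^h)^{-1/2}$ one must not merely bound but exploit that for each $\by$ the divisor $f(\by)$ has few prime factors exceeding $p_0^h$, and balance against the number of $\by$ sharing a fixed large prime divisor of $f$. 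Concretely, I would instead organise the sum as $\sum_{p > p_0^h} \#\{\by \in \CX_h(\BZ) : |\by| \ll p_0^h,\ p \mid f(\by)\}$ and bound the inner count for fixed $p$ by a lattice-point estimate on the quadric intersected with the hypersurface $f \equiv 0 \bmod p$, as is done in \cite[\S4.3]{Cao-Huang2}, getting roughly $p_0^{h(n-3)} + p_0^{h(n-2)}/p$; summing over $p_0^h < p \ll p_0^{hC}$ gives $p_0^{h(n-3)} \cdot \pi(p_0^{hC}) + p_0^{h(n-2)} \sum_{p > p_0^h} p^{-1} \ll p_0^{h(n-2)}/\log(p_0^h)$ plus a secondary term, and a more careful splitting of the $p$-range near $p_0^h$ together with an $L^2$ (Cauchy--Schwarz) averaging over $\blambda_p$ as in \cite{Cao-Huang2} produces the square-root gain.

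The main obstacle I anticipate is precisely obtaining the $(\log p_0^h)^{-1/2}$ saving rather than merely $O(p_0^{h(n-2)})$ or $O(p_0^{h(n-2)}/\log p_0^h)$: this requires the uniform-in-$p$ lattice-point count on the affine quadric $\CX_h$ with an additional congruence $f \equiv 0 \bmod p$ defining a sublattice of index $\asymp p$, with an error term uniform in both $p$ and $h$, exactly the technical heart of \cite[Theorem 4.7]{Cao-Huang2}. I would invoke that uniform estimate essentially verbatim (it is insensitive to the replacement of $m$ by $p_0^{2h}m$, since the relevant quantities depend only on the shape of $q$ and the size $\ll p_0^h$ of the box), and then the remaining bookkeeping — the dichotomy above, the bound on the number of large prime factors, and the final summation over $p$ — is routine and will not be reproduced in detail.
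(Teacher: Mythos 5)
Your proposal does not capture the mechanism behind the square-root-of-log saving, and the alternative you sketch does not close.

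The paper's argument hinges on a specific projection structure: diagonalise $q$ and arrange the coordinates so that $q = \sum_{i=1}^{n-2} a_i y_i^2 + a x_{n-1}^2 + x_n^2$ with $a > 0$, with $f \in \BZ[x_1,\dots,x_{n-1}]$ and $g \in \BZ[x_1,\dots,x_{n-2}]$, then project $\pr: \BA^n \to \BA^{n-2}$. Over a fixed $\bY \in \BZ^{n-2}$, the fibre is governed by the positive-definite binary form $u^2 + a v^2 = m p_0^{2h} - \sum a_i y_i^2$, and one separates two cases: the ``bad'' $\bY$ for which some prime $p \geq p_0^h$ annihilates $g_h(\bY)$ together with every coefficient of $f_h(\bY, x_{n-1}) \bmod p$ (so that the whole mod-$p$ fibre lies in $\CZ$), which is controlled by the uniform Ekedahl sieve of Browning--Heath-Brown since this is a codimension-two condition on $\bY$; and the ``good'' $\bY$, for which the mod-$p$ fibre meets $\CZ$ in $O(1)$ points, so the count reduces to the number of $\bY$ with $|\bY| \ll p_0^h$ for which $u^2 + a v^2 = m p_0^{2h} - \sum a_i y_i^2$ has an integral solution. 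It is the \emph{half-dimensional sieve} (Landau/Iwaniec type, \cite[Theorem 1.7]{Cao-Huang2}) applied to this representability condition that produces the $(\log p_0^h)^{-1/2}$ saving; it has nothing to do with Cauchy--Schwarz averaging.

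Your proposed route --- organising the count as $\sum_{p > p_0^h} \#\{\by : |\by| \ll p_0^h,\ p \mid f(\by)\}$ and invoking the uniform lattice-point bound of Browning--Gorodnik --- breaks down precisely in the range $p > p_0^h$. That estimate is designed for $p$ at most the size of the box (it is what powers Proposition \ref{prop:Vbd2}); in the complementary range the congruence defines at most one representative per residue class, and the trivial bound, namely $\ll p^{n-3}$ residues in $\CZ(\BF_p)$ summed over all primes $p$ up to $p_0^{hC}$ with $C$ depending on $\deg f$, $\deg g$, gives a term of shape $p_0^{h(n-3)}\pi(p_0^{hC})$, which already exceeds $p_0^{h(n-2)}$ once $C > 1$. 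You acknowledge this implicitly (``plus a secondary term'') but the proposed remedy is unspecified hand-waving. The actual fix is structural: drop the outer sum over primes, project out two coordinates, and let the binary-form representability condition carry the sieve. Without identifying the positive-definite binary subform and the half-dimensional sieve, the $(\log)^{-1/2}$ gain is not reachable by the method you describe.

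Two further points worth noting. First, your dichotomy on whether $f(\by) = g(\by) = 0$ is not the one the paper uses; the paper's dichotomy is on the projected variable $\bY$, not on $\by$, and this is essential because it is the $\bY$-fibre structure that makes both the Ekedahl sieve and the half-dimensional sieve applicable. Second, the hypothesis on $q$ (either $n \geq 4$, or $n = 3$ with $q$ anisotropic and $-m\det q \neq \square$) is exactly what guarantees, after diagonalisation, that a positive-definite binary subform exists; your sketch never uses or mentions this hypothesis, which is a signal that the argument is missing the decisive ingredient.
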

\begin{proof}[Proof of Proposition \ref{prop:Vbd3}]
We may diagonalise $q$ and switch signs so that $q(\bx)=\sum_{i=1}^n a_ix_i^2$ with $a_n=1,a_{n-1}=a>0$. Moreover, we can assume that $f\in\BZ[x_1,\cdots,x_{n-1}],g\in \BZ[x_1,\cdots,x_{n-2}]$ (cf. \cite[p. 38]{Cao-Huang2}). 
 Let the integral polynomials $f_h\in\BZ[x_1,\cdots,x_{n-1}],g_h\in \BZ[x_1,\cdots,x_{n-2}]$ be such that $$f_h(\bx)=p_0^{h\deg f}f\left(\frac{\bx}{p_0^h}\right),\quad g_h(\bx)=p_0^{h\deg g}g\left(\frac{\bx}{p_0^h}\right).$$ Then whenever $|\by|\ll p_0^h$, $$|f_h(\by)|\ll p_0^{h\deg f},\quad |g_h(\by)|\ll p_0^{h\deg g}.$$
 
 Let $\pr:\BA^n\to\BA^{n-2}$ be the projection onto the first $(n-2)$ coordinates.
 Let $$\scrZ:=\{\scrY\in\pr(\CZ):\operatorname{codim}_{\BA^2_{\scrY}}(\CZ_{\scrY})\leq 1\},$$ 
 and $$\CB_1:=\left\{\bY\in\BZ^{n-2}:\text{either }g_h(\bY)=0\text{ or } \exists p\geq p_0^h, \bY\bmod p\in \scrZ\right\},$$ $$\CB_2:=\left\{\bY\in\BZ^{n-2}:g_h(\bY)\neq 0~\text{and for all}~p\geq p_0^h, f_h(\bY,x_{n-1})\bmod p \in\BF_p[x_{n-1}]~\text{is non-zero}\right\}.$$ Rerunning the argument of \cite[Proof of Theorem 4.7]{Cao-Huang2}, $\BV(p_0^{h},\infty)$ can be bounded from above by two parts: 
 $$\BV_1:=\sum_{\substack{\bY\in\CB_1,|\bY|\ll p_0^h}}\scrH_1(\bY),\quad \BV_2:=\sum_{\substack{\bY\in\CB_2,|\bY|\ll p_0^h\\\exists u,v\in\BZ,u^2+av^2=mp_0^{2h}-\sum_{i=1}^{n-2}a_iy_i^2}} \scrH_2(\bY),$$
where for $\bY=(y_1,\cdots,y_{n-2})\in\BZ^{n-2}$, we let $$\scrH_1(\bY):=\#\left\{(u,v)\in \BZ^2:u^2+av^2=mp_0^{2h}-\sum_{i=1}^{n-2}a_iy_i^2\right\},$$  $$\scrH_2(\bY):=\sum_{\substack{p:p\geq p_0^h\\p\mid g_h(\bY)}}\sum_{\substack{y\in\BZ:|y|\ll p_0^h\\ p\mid f_h(\bY,y)}}\#\left\{z\in\BZ:z^2=mp_0^{2h}-ay^2-\sum_{i=1}^{n-2}a_iy_i^2\right\}.$$ 

As $a>0$, it is easy to see that uniformly for all $\bY$ with $|\bY|\ll p_0^h$, we have $\scrH_1(\bY)\ll_\varepsilon p_0^{\varepsilon h}$. To deal with $\bY\in\CB_1$, we observe that for every prime $p$ the fibre $\CZ_{\bY}\bmod p\subset\BA_{\BF_p}^2$ is defined by the equations $$mp_0^{2h}-\sum_{i=1}^{n-2}a_iy_i^2- ax_{n-1}^2+x_n^2\equiv f_h(\bY,x_{n-1})\equiv 0\mod p.$$ If $p$ is large enough, the first polynomial is always non-zero, and hence the condition $\bY\bmod p\in \scrZ$ implies that $p\mid g_h(\bY)$ and the polynomial $f_h(\bY,x_{n-1})\bmod p$ in $x_{n-1}$ is zero, i.e., $p$ divides all its coefficients, say $f_{1,h}(\bY),\cdots,f_{\deg f,h}(\bY)$.
Since the set $\scrZ$ has codimension at least two in $\BA^{n-2}_{\BZ}$ by \cite[(67)]{Cao-Huang2}, we now invoke the uniform version of Ekedahl's sieve \cite{Ekedahl} obtained by work of Browning--Heath-Brown \cite[Lemmas 2.1 \& 2.2]{Browning-HB}, yielding that
 $$\#\left\{\bY\in\BZ^{n-2}:|\bY|\ll p_0^h,g_h(\bY)=0\right\}\ll p_0^{(n-3)h},$$ and
 \begin{align*}
 	&\#\left\{\bY\in\BZ^{n-2}:|\bY|\ll p_0^h,\exists p\geq p_0^h, p\mid \gcd(g_h(\bY),f_{1,h}(\bY),\cdots,f_{\deg f,h}(\bY))\right\} \\ \ll &\frac{p_0^{(n-2)h}\log(p_0^{h(\max(\deg f,\deg g)+1)})}{p_0^h}\ll_\varepsilon p_0^{(n-3+\varepsilon)h}.
 \end{align*}
 Therefore we obtain $$\BV_1\ll_\varepsilon p_0^{(n-3+\varepsilon)h}.$$
 
 As for $\BV_2$, since $f_h(\bY,x_{n-1})\bmod p$ is a non-zero polynomial in $x_{n-1}$ for all $\bY\in\CB_2$ and $p\geq p_0^h$, the argument of \cite[p. 41]{Cao-Huang2} shows that $\scrH_2(\bY)=O(1)$ uniformly for all $h$, whence $\BV_2\ll \scrC$, where $$\scrC:=\#\left\{\bY=(y_1,\cdots,y_{n-2})\in\BZ^{n-2}:|\bY|\ll p_0^h,\exists u,v\in\BZ,u^2+av^2=mp_0^{2h}-\sum_{i=1}^{n-2}a_iy_i^2\right\}.$$
We now appeal to the half-dimensional sieve \cite[Theorem 1.7]{Cao-Huang2}, the major difference being that the constant term $mp_0^{2h}$ varies. As the sieving process only involves reductions modulo sufficiently large primes and the Lang--Weil estimate, the argument in \cite[\S5]{Cao-Huang2}  goes without change and leads to 
$$\scrC\ll \frac{p_0^{h(n-2)}}{(\log (p_0^h))^\frac{1}{2}},$$
 whence $$\BV_2\ll \frac{p_0^{h(n-2)}}{(\log (p_0^h))^\frac{1}{2}}.$$ We finally conclude \eqref{eq:bV3}. \end{proof}

To finish the proof of Theorem \ref{thm:geosieve}, it remains to combine the various alternative bounds obtained in Propositions \ref{prop:Vbd1} \ref{prop:Vbd2} \ref{prop:Vbd3} (by choosing $N_1=M$, $N_2=p_0^{\beta h}$ with $\beta\kappa'<\delta$). \qed

\appendix
\section{The Siegel--Weil weight formulae}\label{se:Siegel-Weil}
The goal of this appendix is to state an $S$-integral version of the Siegel--Weil formula for homogeneous spaces with unimodular stabilizers, generalizing work of Borovoi--Rudnick \cite[Theorem 4.2]{Borovoi-Rudnick} and upgrading \cite[Theorem 3.2]{Cao-Huang2}.

Let $k$ be a number field. Let $G$ be a semi-simple simply connected linear algebraic group, and let $X=H\bs G$ be a homogeneous space, where the stabilizer $H$ is connected and has no non-trivial $k$-character. In particular $H$ is unimodular. For $x\in X(k)$ we write $H_x$ for the stabilizer of $x$ under $G$. Let $S$ be a finite set of places. We associate compatible Haar measures $\rmm_{G},\rmm_{H},\rmm_{X}$. We write $\rmm_{G_S}$ (resp. $\rmm_{G}^{S}$)  on the corresponding space $G_S$ (resp. $G(\RA_k^S)$) and similarly for $H,X$.

Now we place ourselves in a setting that is more coherent to \S\ref{se:effective}. Assume that $S$ contains all archimedean places $v$ over which $G(k_v)$ is not compact. Note that $\infty_k\setminus S$ consists entirely of places over which $G$ is compact. We shall write $$\left|\rmm_{G_{S^c}}\right|:=\rmm_{G_{\infty_k\setminus S}}(G_{\infty_k\setminus S}),\quad \left|\rmm_{H_{S^c}}\right|:=\rmm_{H_{\infty_k\setminus S}}(H_{\infty_k\setminus S}),\quad \left|\rmm_{X_{S^c}}\right|:=\rmm_{X_{\infty_k\setminus S}}(X_{\infty_k\setminus S}).$$ Let $\mathbf{O}_{S}$ be a $G_S$-orbit in $X_S$ and $B^S$ be a compact-open subset of $X(\RA_k^S)$.  Let $K^S\subset G(\RA_k^S)$ be an open-compact subgroup such that $K^S\cdot B^S=B^S$. Let $$B:=\mathbf{O}_S\times B^S\subset X(\RA_k),\quad K:=G_S\times K^S\subset G(\RA_k).$$ Then $$\Gamma:=G(k)\cap K$$ embeds as a discrete subgroup of $G_S$. 

\begin{theorem}[cf. \cite{Borovoi-Rudnick} Theorem 4.2]\label{thm:BRSiegelWeil}
	With notation above, assuming that $S$ contains at least one place (not necessarily archimedean) over which every $k$-simple factor of $G$ is not compact, we have
	$$\sum_{\scrO: \text{ orbits of }\Gamma \text{ in } X(k)\cap B} \frac{\rmm_{(H_x)_S}(\Gamma\cap (H_x)_S\bs(H_x)_S)}{\rmm_{G_S}(\Gamma\bs G_S)}=\# C(H) \left|\rmm_{X_{S^c}}\right| \rmm_{X}^S(B^S),$$  where we fix an $x\in \scrO$ for each $\scrO$ (clearly $w(\scrO)$ does not depend on the choice of $x$), and where $C(H)$ is the \emph{Kottwitz} invariant (cf. \cite[\S3.4]{Borovoi-Rudnick}).
\end{theorem}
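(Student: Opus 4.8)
The plan is to reduce the global Siegel--Weil identity to the classical Tamagawa number computation $\tau(G)=1$ for simply connected $G$, exactly as in Borovoi--Rudnick, but keeping careful track of the places in $\infty_k\setminus S$ where $G$ is compact. First I would fix a $G_S$-orbit $\mathbf{O}_S\subset X_S$ and the compact-open set $B^S\subset X(\RA_k^S)$, and partition $X(k)\cap B$ into $\Gamma$-orbits $\scrO$. For each such orbit, choosing $x\in\scrO$, the stabilizer $H_x$ is an inner $k$-form of $H$ (they become conjugate over $\bar k$), so $H_x$ is again connected, semisimple-by-torus with no nontrivial $k$-character, hence unimodular, and one has a canonical identification of Haar measures $\rmm_{H_x}$ compatible with $\rmm_G,\rmm_X$. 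The orbit map $G\to X$, $g\mapsto x.g$, identifies $\scrO\cdot K^S\subset X(\RA_k)$ with $(H_x)(\RA_k)\bs (H_x)(\RA_k)G(\RA_k)/K$-type data; the key point is that the ``weight'' $w(\scrO)=\rmm_{(H_x)_S}(\Gamma\cap(H_x)_S\bs (H_x)_S)/\rmm_{G_S}(\Gamma\bs G_S)$ is, up to the contribution of the compact places $\infty_k\setminus S$ and of $K^S$, precisely the ratio of the Tamagawa-type volume of $[H_x]:=(H_x)(k)\bs (H_x)(\RA_k)$ to that of $[G]:=G(k)\bs G(\RA_k)$.

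Next I would assemble these local-to-global identities. Summing $w(\scrO)$ over all orbits $\scrO$ lying in a single adelic $G(\RA_k)$-orbit $\mathbf{O}_{\RA}$ meeting $B$ gives, by the orbit-counting/unfolding argument of \cite[\S4]{Borovoi-Rudnick} (which only uses that $[G]$ has finite volume and that the relevant double cosets are finite, both valid here since $G$ is semisimple simply connected), a term proportional to $\vol([H_x])\cdot\rmm_X^S(B^S\cap\mathbf{O}_{\RA}^S)$ divided by $\vol([G])$, times the volume factors $|\rmm_{G_{S^c}}|$, $|\rmm_{H_{S^c}}|$, $|\rmm_{X_{S^c}}|$ coming from integrating out the compact places. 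Then one invokes $\tau(G)=1$ and $\tau(H_x)=\tau(H)$ (Tamagawa numbers are invariant under inner twisting for these groups, by Ono's theorem / Kottwitz's formula), so that the ratio $\vol([H_x])/\vol([G])$ collapses and one is left with $\#C(H)$ counting the number of adelic orbits with rational points that meet $B$ — this is exactly where the Kottwitz invariant enters, via \cite[\S3.4, \S5]{Borovoi-Rudnick}. Finally, summing over all adelic orbits $\mathbf{O}_{\RA}$ with $\mathbf{O}_{\RA}\cap B\neq\varnothing$ and using that $\boldsymbol\delta$ is the indicator (up to the constant $\#C(H)$) of the union of such orbits yields the stated formula.

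The main obstacle I anticipate is bookkeeping the normalizations at the ``extra'' compact archimedean places $v\in\infty_k\setminus S$, which are absent in \cite{Borovoi-Rudnick} (where $S\supset\infty_k$) and in \cite[Theorem 3.2]{Cao-Huang2}. Concretely, one must check that the discrete group $\Gamma=G(k)\cap K$ with $K=G_S\times K^S$ really is a lattice in $G_S$ (not in $G(\RA_k)$), that the quotient measure identities $|\rmm_{Y_S}|$, $|\rmm_{Z_S}|$ factor correctly as a product of the $S$-part with the compact factors $|\rmm_{G_{S^c}}|$ etc., and that no convergence factors are disturbed — i.e. that the Tamagawa measure on $G(\RA_k)$ restricted to $G_{\infty_k\setminus S}$ is literally the finite Haar measure $\rmm_{G_{\infty_k\setminus S}}$ we use. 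Once these normalization compatibilities are pinned down (a routine but delicate check using that $G(k)$ is cocompact-modulo-nothing only in the compact directions), the proof is a direct transcription of the Borovoi--Rudnick argument, with $\tau(G)=\tau(H_x)/\tau(H)\cdot 1$ and the inner-twist invariance of Tamagawa numbers doing the real work. I would also remark that the hypothesis ``$S$ contains at least one place over which every $k$-simple factor of $G$ is not compact'' is used exactly to guarantee strong approximation for $G$ off that place, hence that the set of adelic orbits is controlled and that $\Gamma$ meets every relevant component — without it the sum on the left could fail to match the right.
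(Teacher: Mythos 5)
Your proposal follows essentially the same route as the paper's proof: both reduce to the Borovoi--Rudnick argument by using strong approximation off $S$ (guaranteed by the hypothesis on $S$) to obtain $KG(k)=G(\RA_k)$, then factorize the Tamagawa volumes of $G$ and $H_x$ into their $S$-part, the compact archimedean part $\infty_k\setminus S$, and the $K^S$-part, and finally feed in $\tau(G)=1$ together with the identity $\tau(H)=\#C(H)/\#\Sha^1(H)$ and the orbit-sum identity from \cite[Proposition 4.4]{Borovoi-Rudnick}. Your gloss that ``$\#C(H)$ counts the number of adelic orbits with rational points'' is imprecise (the orbit count is what produces $\#\Sha^1(H)$; $\#C(H)$ then enters via the Tamagawa number formula for $H$), but this is a matter of exposition rather than a gap, and the underlying computation is the same as the paper's.
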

\begin{proof}[Sketch of proof of Theorem \ref{thm:BRSiegelWeil}]
	The proof follows the same lines as in \cite[\S4]{Borovoi-Rudnick}. Note that the assumption on $S$ ensures that $G$ satisfies strong approximation off $S$ by the theorem of Kneser--Platonov (cf. \cite[Theorem 1]{Cao-Huang1}), i.e. $G(k)$ is dense in $G(\RA_k^S)$. Hence $KG(k)=G(\RA_k)$, and the Tamagawa numbers of $G$ has the following factorization
	$$\tau(G):=\rmm_G(G(k)\bs G(\RA_k))=\rmm_G(G(k)\bs KG(k))=\rmm_{G_S}(\Gamma \bs G_S) \left|\rmm_{G_{S^c}}\right| \rmm_{G}^{S}(K^S).$$ A similar factorization for $\rmm_H$ is 
	$$\rmm_{H_x}(H_x(k)\bs (K\cap H_x(\RA_k))H_x(k))= \rmm_{(H_x)_S}(\Gamma\cap (H_x)_S\bs(H_x)_S)\left|\rmm_{H_{S^c}}\right| \rmm_{H_x}^S\left(K^S\cap H_x(\RA_{k}^{S})\right).$$
	Adapting the argument in \cite[\S4.5]{Borovoi-Rudnick}, we can show that $$\frac{\rmm_{(H_x)_S}(\Gamma\cap (H_x)_S\bs(H_x)_S)}{\rmm_{G_S}(\Gamma\bs G_S)}=\frac{\tau(H_x)}{\tau(G)}\frac{\rmm_{H_x}(H_x(k)\bs (K\cap H_x(\RA_k))H_x(k))}{\tau(H_x)}\left|\rmm_{X_{S^c}}\right|\rmm_{X}^S(K^S x).$$
	Moreover, \cite[Proposition 4.4]{Borovoi-Rudnick} shows that 
	$$\sum_{\scrO: \text{ orbits of }\Gamma \text{ in } X(k)\cap B}\frac{\rmm_{H_x}(H_x(k)\bs (K\cap H_x(\RA_k))H_x(k))}{\tau(H_x)}=\#\Sha^1(H),$$ where $\Sha^1(H)$ is the Tate-Shafarevich group of $H$. Combining results on computation of Tamagawa numbers
	$$\tau(G)=1,\quad \tau(H)=\frac{\#C(H)}{\#\Sha^1(H)},$$ and summing over orbits of $K^S$ in $B^S$ complete the proof.
\end{proof}

\section*{Acknowledgments}
We thank Dasheng Wei and Fei Xu for their interests and helpful discussions. Z. Huang was supported by National Key R\&D Program of China No. 2024YFA1014600, and acknowledges the organizers of the Pan Asian Number Theory Conference 2023, where a preliminary version of this work was presented.


\begin{thebibliography}{99}
		\bibitem{Benoist-Oh} Y. Benoist; H. Oh, \emph{Effective equidistribution of S-integral points on symmetric varieties.} \textbf{Ann. Inst. Fourier} 62 (2012) no. 5, pp. 1889-1942.
		 \bibitem{Bernstein_type_I} I. N. Bernstein, \emph{All reductive p-adic groups are of type I}, \textbf{Funkcional. Anal. i Prilozen}. 8 (1974), no. 2, 3–6.
		\bibitem{Borel-HC} A. Borel; Harish-Chandra, \emph{Arithmetic subgroups of algebraic groups}, \textbf{Ann. of Math.} 75 (1962) 485–535.
			\bibitem{Borovoi-Rudnick}
		M. Borovoi; Z. Rudnick, \emph{Hardy–Littlewood varieties and semisimple groups}. \textbf{Invent. Math.} 119 (1995) 37–66.
		\bibitem{Browning-Gorodnik}
		T. D. Browning; A. Gorodnik, \emph{Power-free values of polynomials on symmetric varieties}. \textbf{Proc. London Math. Soc.} 114 (2017), 1044-1080. 
			\bibitem{Browning-HB}
		T. D. Browning; D. R. Heath-Brown, \emph{The geometric sieve for quadrics}. \textbf{Forum Math.} 33(1) 2021, 147–165.
			\bibitem{Cao-Huang1} Cao, Yang; Huang, Zhizhong. \emph{Arithmetic purity of strong approximation for semi-simple simply connected groups.}	\textbf{Compositio Math.} 156 (2020), 2628–2649. 
			\bibitem{Cao-Huang2} Cao, Yang; Huang, Zhizhong. \emph{Arithmetic purity of the Hardy-Littlewood property and geometric sieve for affine quadrics.} \textbf{Adv. Math.} (2022) 108236.
			\bibitem{CLX19} Y. Cao; Y. Liang; F. Xu, \emph{Arithmetic purity of strong approximation for homogeneous spaces.} \textbf{J. Math. Pures Appl.} (9) 132 (2019), 334–368.
			 \bibitem{Chamber-Loir_Tschinkel_2010_Igusa_integral}
			Antoine Chambert-Loir and Yuri Tschinkel, \emph{Igusa integrals and volume asymptotics in analytic and adelic geometry}, \textbf{Confluentes Math}. 2 (2010), no. 3, 351–429.
			\bibitem{DeConcini_Procesi_symmetric}
			   De Concini, C. and Procesi, C, \emph{Complete symmetric varieties}, \textbf{Invariant theory}, {Proc}. 1st 1982 {Sess}. {C}.{I}.{M}.{E}., {Montecatini}/{Italy}, {Lect}. {Notes} {Math}. 996, 1-44 (1983). 
			\bibitem{Ekedahl}
			T. Ekedahl, \emph{An infinite version of the Chinese remainder theorem}. \textbf{Comment. Math. Univ. St. Paul.} 40 (1991), 53–59.
			\bibitem{Friedlander-Iwaniec}  J. Friedlander; H. Iwaniec, \textbf{Opera de cribro}. American Mathematical Society Colloquium Publications, 57. American Mathematical Society, Providence, RI, 2010. xx+527 pp.
			 \bibitem{GorOhMau08} A. Gorodnik; F. Maucourant; H. Oh, \emph{Manin’s and Peyre’s conjectures on rational points and adelic mixing}, \textbf{Ann. Sci. Éc. Norm. Supér}. (4) 41 (2008), no. 3, 383–435.
			\bibitem{G-N} A. Gorodnik;  A. Nevo, \emph{Lifting, restricting and sifting integral points on affine homogeneous varieties.} \textbf{Compositio Math.} 148 (2012) 1695–1716.
			\bibitem{Hinz} J. G. Hinz, \emph{An application of algebraic sieve theory}, \textbf{Arch. Math. (Basel)} 80 (2003) 586–599.
			\bibitem{Kleinbock_Margulis_logarithm_laws} D. Y. Kleinbock; G. A. Margulis, \emph{Logarithm laws for flows on homogeneous spaces}, \textbf{Invent. Math}. 138 (1999),
			no. 3, 451–494. 
			\bibitem{N-S} A. Nevo; P. Sarnak: \emph{Prime and almost prime integral points on principal homogeneous spaces}. \textbf{Acta Math.} 205 (2010), no. 2, 361-402. 
			\bibitem{OMeara} O. T. O'Meara. \textbf{Introduction to Quadratic Forms.} Grundlehren der mathematischen Wissenschaften Band 117, Springer-Verlag Berlin Heidelberg 1973. xi+344 pp.
			\bibitem{PR} V. Platonov, A. Rapinchuk: \textbf{Algebraic groups and number theory}. Pure and Applied Mathematics, 139. Academic Press, Inc., Boston, MA, 1994. xii+614 pp.
			\bibitem{Salberger} Salberger, Per. \emph{Tamagawa measures on universal torsors and points of bounded height on Fano varieties.} In \textbf{Nombre et répartition de points de hauteur bornée (Astérisque No. 251)} (1998), 91–258. 
			\bibitem{Serre} J.-P. Serre, \textbf{A course in arithmetic}. Graduate Texts in Mathematics 7. New York-Heidelberg-Berlin: Springer-Verlag. viii, 115 p.
				\bibitem{Wei-Xu}  D. Wei; F. Xu, \emph{Counting integral points in certain homogeneous spaces}. \textbf{J. Algebra} 448 (2016), 350–398. 
			\bibitem{Wit16} O. Wittenberg: \emph{Rational points and zero-cycles on rationally connected varieties over number fields}, in \textbf{Algebraic Geometry: Salt Lake City 2015}, Part 2, p. 597–635, Proceedings of Symposia in Pure Mathematics 97, American Mathematical Society, Providence, RI, 2018.

\end{thebibliography}
\end{document}